\newtheorem{theorem}{Theorem}
\newtheorem{axiom}[theorem]{Axiom}
\newtheorem{conjecture}[theorem]{Conjecture}
\newtheorem{corollary}[theorem]{Corollary}
\newtheorem{definition}[theorem]{Definition}
\newtheorem{example}[theorem]{Example}
\newtheorem{exercise}[theorem]{Exercise}
\newtheorem{lemma}[theorem]{Lemma}
\newtheorem{problem}[theorem]{Problem}
\newtheorem{proposition}[theorem]{Proposition}
\newtheorem{remark}[theorem]{Remark}
\let\pdfoutput=\undefined\fi
\chardef\@x10\chardef\@xv60
\def\tcitime{
\def\@time{%
  \@minute\time\@hour\@minute\divide\@hour\@xv
  \ifnum\@hour<\@x 0\fi\the\@hour:%
  \multiply\@hour\@xv\advance\@minute-\@hour
  \ifnum\@minute<\@x 0\fi\the\@minute
  }}%
\def\x@hyperref#1#2#3{%
   \catcode`\~ = 12
   \catcode`\$ = 12
   \catcode`\_ = 12
   \catcode`\# = 12
   \catcode`\& = 12
   \catcode`\% = 12
   \y@hyperref{#1}{#2}{#3}%
}
\def\y@hyperref#1#2#3#4{%
   #2\ref{#4}#3
   \catcode`\~ = 13
   \catcode`\$ = 3
   \catcode`\_ = 8
   \catcode`\# = 6
   \catcode`\& = 4
   \catcode`\% = 14
}
\def\QCTOpt[#1]#2{%
  \def\QCTOptB{#1}
  \def\QCTOptA{#2}
}
\def\QCTNOpt#1{%
  \def\QCTOptA{#1}
  \let\QCTOptB\empty
}
\def\Qct{%
  \@ifnextchar[{%
    \QCTOpt}{\QCTNOpt}
}
\def\QCBOpt[#1]#2{%
  \def\QCBOptB{#1}%
  \def\QCBOptA{#2}%
}
\def\QCBNOpt#1{%
  \def\QCBOptA{#1}%
  \let\QCBOptB\empty
}
\def\Qcb{%
  \@ifnextchar[{%
    \QCBOpt}{\QCBNOpt}%
}
\def\PrepCapArgs{%
  \ifx\QCBOptA\empty
    \ifx\QCTOptA\empty
      {}%
    \else
      \ifx\QCTOptB\empty
        {\QCTOptA}%
      \else
        [\QCTOptB]{\QCTOptA}%
      \fi
    \fi
  \else
    \ifx\QCBOptA\empty
      {}%
    \else
      \ifx\QCBOptB\empty
        {\QCBOptA}%
      \else
        [\QCBOptB]{\QCBOptA}%
      \fi
    \fi
  \fi
}
\def\GRAPHICSPS#1{%
 \ifcase\GRAPHICSTYPE
   \special{ps: #1}%
 \or
   \special{language "PS", include "#1"}%
 \fi
}%
\def\graffile#1#2#3#4{%
    \bgroup
	   \@inlabelfalse
       \leavevmode
       \@ifundefined{bbl@deactivate}{\def~{\string~}}{\activesoff}%
        \raise -#4 \BOXTHEFRAME{%
           \hbox to #2{\raise #3\hbox to #2{\null #1\hfil}}}%
    \egroup
}%
\def\draftbox#1#2#3#4{%
 \leavevmode\raise -#4 \hbox{%
  \frame{\rlap{\protect\tiny #1}\hbox to #2%
   {\vrule height#3 width\z@ depth\z@\hfil}%
  }%
 }%
}%
\let\nographics=\@msidraft
\newif\ifwasdraft
\def\GRAPHIC#1#2#3#4#5{%
   \ifnum\@msidraft=\@ne\draftbox{#2}{#3}{#4}{#5}%
   \else\graffile{#1}{#3}{#4}{#5}%
   \fi
}
\def\addtoLaTeXparams#1{%
    \edef\LaTeXparams{\LaTeXparams #1}}%
\newif\ifBoxFrame \BoxFramefalse
\newif\ifOverFrame \OverFramefalse
\newif\ifUnderFrame \UnderFramefalse
\def\BOXTHEFRAME#1{%
   \hbox{%
      \ifBoxFrame
         \frame{#1}%
      \else
         {#1}%
      \fi
   }%
}
\def\doFRAMEparams#1{\BoxFramefalse\OverFramefalse\UnderFramefalse\readFRAMEparams#1\end}%
\def\readFRAMEparams#1{%
 \ifx#1\end%
  \let\next=\relax
  \else
  \ifx#1i\dispkind=\z@\fi
  \ifx#1d\dispkind=\@ne\fi
  \ifx#1f\dispkind=\tw@\fi
  \ifx#1t\addtoLaTeXparams{t}\fi
  \ifx#1b\addtoLaTeXparams{b}\fi
  \ifx#1p\addtoLaTeXparams{p}\fi
  \ifx#1h\addtoLaTeXparams{h}\fi
  \ifx#1X\BoxFrametrue\fi
  \ifx#1O\OverFrametrue\fi
  \ifx#1U\UnderFrametrue\fi
  \ifx#1w
    \ifnum\@msidraft=1\wasdrafttrue\else\wasdraftfalse\fi
    \@msidraft=\@ne
  \fi
  \let\next=\readFRAMEparams
  \fi
 \next
 }%
\def\IFRAME#1#2#3#4#5#6{%
      \bgroup
      \let\QCTOptA\empty
      \let\QCTOptB\empty
      \let\QCBOptA\empty
      \let\QCBOptB\empty
      #6%
      \parindent=0pt
      \leftskip=0pt
      \rightskip=0pt
      \setbox0=\hbox{\QCBOptA}%
      \@tempdima=#1\relax
      \ifOverFrame
          \typeout{This is not implemented yet}%
          \show\HELP
      \else
         \ifdim\wd0>\@tempdima
            \advance\@tempdima by \@tempdima
            \ifdim\wd0 >\@tempdima
               \setbox1 =\vbox{%
                  \unskip\hbox to \@tempdima{\hfill\GRAPHIC{#5}{#4}{#1}{#2}{#3}\hfill}%
                  \unskip\hbox to \@tempdima{\parbox[b]{\@tempdima}{\QCBOptA}}%
               }%
               \wd1=\@tempdima
            \else
               \textwidth=\wd0
               \setbox1 =\vbox{%
                 \noindent\hbox to \wd0{\hfill\GRAPHIC{#5}{#4}{#1}{#2}{#3}\hfill}\\%
                 \noindent\hbox{\QCBOptA}%
               }%
               \wd1=\wd0
            \fi
         \else
            \ifdim\wd0>0pt
              \hsize=\@tempdima
              \setbox1=\vbox{%
                \unskip\GRAPHIC{#5}{#4}{#1}{#2}{0pt}%
                \break
                \unskip\hbox to \@tempdima{\hfill \QCBOptA\hfill}%
              }%
              \wd1=\@tempdima
           \else
              \hsize=\@tempdima
              \setbox1=\vbox{%
                \unskip\GRAPHIC{#5}{#4}{#1}{#2}{0pt}%
              }%
              \wd1=\@tempdima
           \fi
         \fi
         \@tempdimb=\ht1
         \advance\@tempdimb by -#2
         \advance\@tempdimb by #3
         \leavevmode
         \raise -\@tempdimb \hbox{\box1}%
      \fi
      \egroup%
}%
\def\DFRAME#1#2#3#4#5{%
  \vspace\topsep
  \hfil\break
  \bgroup
     \leftskip\@flushglue
	 \rightskip\@flushglue
	 \parindent\z@
	 \parfillskip\z@skip
     \let\QCTOptA\empty
     \let\QCTOptB\empty
     \let\QCBOptA\empty
     \let\QCBOptB\empty
	 \vbox\bgroup
        \ifOverFrame 
           #5\QCTOptA\par
        \fi
        \GRAPHIC{#4}{#3}{#1}{#2}{\z@}%
        \ifUnderFrame 
           \break#5\QCBOptA
        \fi
	 \egroup
  \egroup
  \vspace\topsep
  \break
}%
\def\FFRAME#1#2#3#4#5#6#7{%
  \@ifundefined{floatstyle}
    {
     \begin{figure}[#1]%
    }
    {
	 \ifx#1h
      \begin{figure}[H]%
	 \else
      \begin{figure}[#1]%
	 \fi
	}
  \let\QCTOptA\empty
  \let\QCTOptB\empty
  \let\QCBOptA\empty
  \let\QCBOptB\empty
  \ifOverFrame
    #4
    \ifx\QCTOptA\empty
    \else
      \ifx\QCTOptB\empty
        \caption{\QCTOptA}%
      \else
        \caption[\QCTOptB]{\QCTOptA}%
      \fi
    \fi
    \ifUnderFrame\else
      \label{#5}%
    \fi
  \else
    \UnderFrametrue%
  \fi
  \begin{center}\GRAPHIC{#7}{#6}{#2}{#3}{\z@}\end{center}%
  \ifUnderFrame
    #4
    \ifx\QCBOptA\empty
      \caption{}%
    \else
      \ifx\QCBOptB\empty
        \caption{\QCBOptA}%
      \else
        \caption[\QCBOptB]{\QCBOptA}%
      \fi
    \fi
    \label{#5}%
  \fi
  \end{figure}%
 }%
\def\makeactives{
  \catcode`\"=\active
  \catcode`\;=\active
  \catcode`\:=\active
  \catcode`\'=\active
  \catcode`\~=\active
}
   \gdef\activesoff{%
      \def"{\string"}%
      \def;{\string;}%
      \def:{\string:}%
      \def'{\string'}%
      \def~{\string~}%
    }
\def\FRAME#1#2#3#4#5#6#7#8{%
 \bgroup
 \ifnum\@msidraft=\@ne
   \wasdrafttrue
 \else
   \wasdraftfalse%
 \fi
 \def\LaTeXparams{}%
 \dispkind=\z@
 \def\LaTeXparams{}%
 \doFRAMEparams{#1}%
 \ifnum\dispkind=\z@\IFRAME{#2}{#3}{#4}{#7}{#8}{#5}\else
  \ifnum\dispkind=\@ne\DFRAME{#2}{#3}{#7}{#8}{#5}\else
   \ifnum\dispkind=\tw@
    \edef\@tempa{\noexpand\FFRAME{\LaTeXparams}}%
    \@tempa{#2}{#3}{#5}{#6}{#7}{#8}%
    \fi
   \fi
  \fi
  \ifwasdraft\@msidraft=1\else\@msidraft=0\fi{}%
  \egroup
 }%
\def\TEXUX#1{"texux"}
\long\def\QQQ#1#2{%
     \long\expandafter\def\csname#1\endcsname{#2}}%
\long\def\QQA#1#2{}%
\def\QTR#1#2{{\csname#1\endcsname {#2}}}%
\def\EXPAND#1[#2]#3{}%
\def\NOEXPAND#1[#2]#3{}%
\def\LaTeXparent#1{}%
\def\ChildStyles#1{}%
\def\ChildDefaults#1{}%
\def\QTagDef#1#2#3{}%
  \providecommand{\UNICODE}[2][]{\protect\rule{.1in}{.1in}}
  \providecommand{\U}[1]{\protect\rule{.1in}{.1in}}
\def\QQfnmark#1{\footnotemark}
 \def\abstract{%
  \if@twocolumn
   \section*{Abstract (Not appropriate in this style!)}%
   \else \small 
   \begin{center}{\bf Abstract\vspace{-.5em}\vspace{\z@}}\end{center}%
   \quotation 
   \fi
  }%
   \def\registered{\relax\ifmmode{}\r@gistered
                    \else$\m@th\r@gistered$\fi}%
 \def\r@gistered{^{\ooalign
  {\hfil\raise.07ex\hbox{$\scriptstyle\rm\text{R}$}\hfil\crcr
  \mathhexbox20D}}}}{}%
\newdimen\theight
\def\newfmtname{LaTeX2e}
  \DeclareOldFontCommand{\rm}{\normalfont\rmfamily}{\mathrm}
  \DeclareOldFontCommand{\sf}{\normalfont\sffamily}{\mathsf}
  \DeclareOldFontCommand{\tt}{\normalfont\ttfamily}{\mathtt}
  \DeclareOldFontCommand{\bf}{\normalfont\bfseries}{\mathbf}
  \DeclareOldFontCommand{\it}{\normalfont\itshape}{\mathit}
  \DeclareOldFontCommand{\sl}{\normalfont\slshape}{\@nomath\sl}
  \DeclareOldFontCommand{\sc}{\normalfont\scshape}{\@nomath\sc}
\def\alpha{{\Greekmath 010B}}%
\def\beta{{\Greekmath 010C}}%
\def\gamma{{\Greekmath 010D}}%
\def\delta{{\Greekmath 010E}}%
\def\epsilon{{\Greekmath 010F}}%
\def\zeta{{\Greekmath 0110}}%
\def\eta{{\Greekmath 0111}}%
\def\theta{{\Greekmath 0112}}%
\def\iota{{\Greekmath 0113}}%
\def\kappa{{\Greekmath 0114}}%
\def\lambda{{\Greekmath 0115}}%
\def\mu{{\Greekmath 0116}}%
\def\nu{{\Greekmath 0117}}%
\def\xi{{\Greekmath 0118}}%
\def\pi{{\Greekmath 0119}}%
\def\rho{{\Greekmath 011A}}%
\def\sigma{{\Greekmath 011B}}%
\def\tau{{\Greekmath 011C}}%
\def\upsilon{{\Greekmath 011D}}%
\def\phi{{\Greekmath 011E}}%
\def\chi{{\Greekmath 011F}}%
\def\psi{{\Greekmath 0120}}%
\def\omega{{\Greekmath 0121}}%
\def\varepsilon{{\Greekmath 0122}}%
\def\vartheta{{\Greekmath 0123}}%
\def\varpi{{\Greekmath 0124}}%
\def\varrho{{\Greekmath 0125}}%
\def\varsigma{{\Greekmath 0126}}%
\def\varphi{{\Greekmath 0127}}%
\def\nabla{{\Greekmath 0272}}
\def\FindBoldGroup{%
   {\setbox0=\hbox{$\mathbf{x\global\edef\theboldgroup{\the\mathgroup}}$}}%
}
\def\Greekmath#1#2#3#4{%
    \if@compatibility
        \ifnum\mathgroup=\symbold
           \mathchoice{\mbox{\boldmath$\displaystyle\mathchar"#1#2#3#4$}}%
                      {\mbox{\boldmath$\textstyle\mathchar"#1#2#3#4$}}%
                      {\mbox{\boldmath$\scriptstyle\mathchar"#1#2#3#4$}}%
                      {\mbox{\boldmath$\scriptscriptstyle\mathchar"#1#2#3#4$}}%
        \else
           \mathchar"#1#2#3#4%
        \fi 
    \else 
        \FindBoldGroup
        \ifnum\mathgroup=\theboldgroup 
           \mathchoice{\mbox{\boldmath$\displaystyle\mathchar"#1#2#3#4$}}%
                      {\mbox{\boldmath$\textstyle\mathchar"#1#2#3#4$}}%
                      {\mbox{\boldmath$\scriptstyle\mathchar"#1#2#3#4$}}%
                      {\mbox{\boldmath$\scriptscriptstyle\mathchar"#1#2#3#4$}}%
        \else
           \mathchar"#1#2#3#4%
        \fi     	    
	  \fi}
\newif\ifGreekBold  \GreekBoldfalse
\let\SAVEPBF=\pbf
\def\pbf{\GreekBoldtrue\SAVEPBF}%
  \newcounter{equationnumber}  
  \def\mathletters{%
     \addtocounter{equation}{1}
     \edef\@currentlabel{\theequation}%
     \setcounter{equationnumber}{\c@equation}
     \setcounter{equation}{0}%
     \edef\theequation{\@currentlabel\noexpand\alph{equation}}%
  }
    \def\BibTeX{{\rm B\kern-.05em{\sc i\kern-.025em b}\kern-.08em
                 T\kern-.1667em\lower.7ex\hbox{E}\kern-.125emX}}}{}%
\def\AmS{{\protect\usefont{OMS}{cmsy}{m}{n}%
                A\kern-.1667em\lower.5ex\hbox{M}\kern-.125emS}}}{}%
\def\@@eqncr{\let\@tempa\relax
    \ifcase\@eqcnt \def\@tempa{& & &}\or \def\@tempa{& &}%
      \else \def\@tempa{&}\fi
     \@tempa
     \if@eqnsw
        \iftag@
           \@taggnum
        \else
           \@eqnnum\stepcounter{equation}%
        \fi
     \fi
     \global\tag@false
     \global\@eqnswtrue
     \global\@eqcnt\z@\cr}
\def\TCItag{\@ifnextchar*{\@TCItagstar}{\@TCItag}}
\def\@TCItag#1{%
    \global\tag@true
    \global\def\@taggnum{(#1)}%
    \global\def\@currentlabel{#1}}
\def\@TCItagstar*#1{%
    \global\tag@true
    \global\def\@taggnum{#1}%
    \global\def\@currentlabel{#1}}
\def\tint{\msi@int\textstyle\int}%
\def\tiint{\msi@int\textstyle\iint}%
\def\tiiint{\msi@int\textstyle\iiint}%
\def\tiiiint{\msi@int\textstyle\iiiint}%
\def\tidotsint{\msi@int\textstyle\idotsint}%
\def\toint{\msi@int\textstyle\oint}%
\newtoks\temptoksa
\newtoks\temptoksb
\newtoks\temptoksc
\def\msi@int#1#2{%
 \def\@temp{{#1#2\the\temptoksc_{\the\temptoksa}^{\the\temptoksb}}}%
 \futurelet\@nextcs
 \@int
}
\def\@int{%
   \ifx\@nextcs\limits
      \typeout{Found limits}%
      \temptoksc={\limits}%
	  \let\@next\@intgobble%
   \else\ifx\@nextcs\nolimits
      \typeout{Found nolimits}%
      \temptoksc={\nolimits}%
	  \let\@next\@intgobble%
   \else
      \typeout{Did not find limits or no limits}%
      \temptoksc={}%
      \let\@next\msi@limits%
   \fi\fi
   \@next   
}%
\def\@intgobble#1{%
   \typeout{arg is #1}%
   \msi@limits
}
\def\msi@limits{%
   \temptoksa={}%
   \temptoksb={}%
   \@ifnextchar_{\@limitsa}{\@limitsb}%
}
\def\@limitsa_#1{%
   \temptoksa={#1}%
   \@ifnextchar^{\@limitsc}{\@temp}%
}
\def\@limitsb{%
   \@ifnextchar^{\@limitsc}{\@temp}%
}
\def\@limitsc^#1{%
   \temptoksb={#1}%
   \@ifnextchar_{\@limitsd}{\@temp}%
}
\def\@limitsd_#1{%
   \temptoksa={#1}%
   \@temp
}
\def\dint{\msi@int\displaystyle\int}%
\def\diint{\msi@int\displaystyle\iint}%
\def\diiint{\msi@int\displaystyle\iiint}%
\def\diiiint{\msi@int\displaystyle\iiiint}%
\def\didotsint{\msi@int\displaystyle\idotsint}%
\def\doint{\msi@int\displaystyle\oint}%
\def\dsum{\mathop{\displaystyle \sum }}%
\def\dprod{\mathop{\displaystyle \prod }}%
\def\ExitTCILatex{\makeatother }
\if@compatibility\message{amsmath already loaded}\fi\aftergroup\ExitTCILatex}
\if@compatibility\message{amstex already loaded}\fi\aftergroup\ExitTCILatex}
\if@compatibility\message{amsgen already loaded}\fi\aftergroup\ExitTCILatex}
\let\DOTSI\relax
\def\RIfM@{\relax\ifmmode}%
\def\FN@{\futurelet\next}%
\def\iint{\DOTSI\intno@\tw@\FN@\ints@}%
\def\iiint{\DOTSI\intno@\thr@@\FN@\ints@}%
\def\iiiint{\DOTSI\intno@4 \FN@\ints@}%
\def\idotsint{\DOTSI\intno@\z@\FN@\ints@}%
\def\ints@{\findlimits@\ints@@}%
\newif\iflimtoken@
\newif\iflimits@
\def\findlimits@{\limtoken@true\ifx\next\limits\limits@true
 \else\ifx\next\nolimits\limits@false\else
 \limtoken@false\ifx\ilimits@\nolimits\limits@false\else
 \ifinner\limits@false\else\limits@true\fi\fi\fi\fi}%
\def\multint@{\int\ifnum\intno@=\z@\intdots@                          
 \else\intkern@\fi                                                    
 \ifnum\intno@>\tw@\int\intkern@\fi                                   
 \ifnum\intno@>\thr@@\int\intkern@\fi                                 
 \int}
\def\multintlimits@{\intop\ifnum\intno@=\z@\intdots@\else\intkern@\fi
 \ifnum\intno@>\tw@\intop\intkern@\fi
 \ifnum\intno@>\thr@@\intop\intkern@\fi\intop}%
\def\intic@{%
    \mathchoice{\hskip.5em}{\hskip.4em}{\hskip.4em}{\hskip.4em}}%
\def\negintic@{\mathchoice
 {\hskip-.5em}{\hskip-.4em}{\hskip-.4em}{\hskip-.4em}}%
\def\ints@@{\iflimtoken@                                              
 \def\ints@@@{\iflimits@\negintic@
   \mathop{\intic@\multintlimits@}\limits                             
  \else\multint@\nolimits\fi                                          
  \eat@}
 \else                                                                
 \def\ints@@@{\iflimits@\negintic@
  \mathop{\intic@\multintlimits@}\limits\else
  \multint@\nolimits\fi}\fi\ints@@@}%
\def\intkern@{\mathchoice{\!\!\!}{\!\!}{\!\!}{\!\!}}%
\def\plaincdots@{\mathinner{\cdotp\cdotp\cdotp}}%
\def\intdots@{\mathchoice{\plaincdots@}%
 {{\cdotp}\mkern1.5mu{\cdotp}\mkern1.5mu{\cdotp}}%
 {{\cdotp}\mkern1mu{\cdotp}\mkern1mu{\cdotp}}%
 {{\cdotp}\mkern1mu{\cdotp}\mkern1mu{\cdotp}}}%
\def\RIfM@{\relax\protect\ifmmode}
\def\text{\RIfM@\expandafter\text@\else\expandafter\mbox\fi}
\let\nfss@text\text
\def\text@#1{\mathchoice
   {\textdef@\displaystyle\f@size{#1}}%
   {\textdef@\textstyle\tf@size{\firstchoice@false #1}}%
   {\textdef@\textstyle\sf@size{\firstchoice@false #1}}%
   {\textdef@\textstyle \ssf@size{\firstchoice@false #1}}%
   \glb@settings}
\def\textdef@#1#2#3{\hbox{{%
                    \everymath{#1}%
                    \let\f@size#2\selectfont
                    #3}}}
\newif\iffirstchoice@
\def\Let@{\relax\iffalse{\fi\let\\=\cr\iffalse}\fi}%
\def\vspace@{\def\vspace##1{\crcr\noalign{\vskip##1\relax}}}%
\def\multilimits@{\bgroup\vspace@\Let@
 \baselineskip\fontdimen10 \scriptfont\tw@
 \advance\baselineskip\fontdimen12 \scriptfont\tw@
 \lineskip\thr@@\fontdimen8 \scriptfont\thr@@
 \lineskiplimit\lineskip
 \vbox\bgroup\ialign\bgroup\hfil$\m@th\scriptstyle{##}$\hfil\crcr}%
\def\Sb{_\multilimits@}%
\def\endSb{\crcr\egroup\egroup\egroup}%
\def\Sp{^\multilimits@}%
\newdimen\ex@
\def\rightarrowfill@#1{$#1\m@th\mathord-\mkern-6mu\cleaders
 \hbox{$#1\mkern-2mu\mathord-\mkern-2mu$}\hfill
 \mkern-6mu\mathord\rightarrow$}%
\def\leftarrowfill@#1{$#1\m@th\mathord\leftarrow\mkern-6mu\cleaders
 \hbox{$#1\mkern-2mu\mathord-\mkern-2mu$}\hfill\mkern-6mu\mathord-$}%
\def\leftrightarrowfill@#1{$#1\m@th\mathord\leftarrow
\mkern-6mu\cleaders
 \hbox{$#1\mkern-2mu\mathord-\mkern-2mu$}\hfill
 \mkern-6mu\mathord\rightarrow$}%
\def\overrightarrow{\mathpalette\overrightarrow@}%
\def\overrightarrow@#1#2{\vbox{\ialign{##\crcr\rightarrowfill@#1\crcr
 \noalign{\kern-\ex@\nointerlineskip}$\m@th\hfil#1#2\hfil$\crcr}}}%
\def\overleftarrow{\mathpalette\overleftarrow@}%
\def\overleftarrow@#1#2{\vbox{\ialign{##\crcr\leftarrowfill@#1\crcr
 \noalign{\kern-\ex@\nointerlineskip}$\m@th\hfil#1#2\hfil$\crcr}}}%
\def\overleftrightarrow{\mathpalette\overleftrightarrow@}%
\def\overleftrightarrow@#1#2{\vbox{\ialign{##\crcr
   \leftrightarrowfill@#1\crcr
 \noalign{\kern-\ex@\nointerlineskip}$\m@th\hfil#1#2\hfil$\crcr}}}%
\def\underrightarrow{\mathpalette\underrightarrow@}%
\def\underrightarrow@#1#2{\vtop{\ialign{##\crcr$\m@th\hfil#1#2\hfil
  $\crcr\noalign{\nointerlineskip}\rightarrowfill@#1\crcr}}}%
\def\underleftarrow{\mathpalette\underleftarrow@}%
\def\underleftarrow@#1#2{\vtop{\ialign{##\crcr$\m@th\hfil#1#2\hfil
  $\crcr\noalign{\nointerlineskip}\leftarrowfill@#1\crcr}}}%
\def\underleftrightarrow{\mathpalette\underleftrightarrow@}%
\def\underleftrightarrow@#1#2{\vtop{\ialign{##\crcr$\m@th
  \hfil#1#2\hfil$\crcr
 \noalign{\nointerlineskip}\leftrightarrowfill@#1\crcr}}}%
\def\qopnamewl@#1{\mathop{\operator@font#1}\nlimits@}
\let\nlimits@\displaylimits
\def\setboxz@h{\setbox\z@\hbox}
\def\varlim@#1#2{\mathop{\vtop{\ialign{##\crcr
 \hfil$#1\m@th\operator@font lim$\hfil\crcr
 \noalign{\nointerlineskip}#2#1\crcr
 \noalign{\nointerlineskip\kern-\ex@}\crcr}}}}
 \def\rightarrowfill@#1{\m@th\setboxz@h{$#1-$}\ht\z@\z@
  $#1\copy\z@\mkern-6mu\cleaders
  \hbox{$#1\mkern-2mu\box\z@\mkern-2mu$}\hfill
  \mkern-6mu\mathord\rightarrow$}
\def\leftarrowfill@#1{\m@th\setboxz@h{$#1-$}\ht\z@\z@
  $#1\mathord\leftarrow\mkern-6mu\cleaders
  \hbox{$#1\mkern-2mu\copy\z@\mkern-2mu$}\hfill
  \mkern-6mu\box\z@$}
\def\projlim{\qopnamewl@{proj\,lim}}
\def\injlim{\qopnamewl@{inj\,lim}}
\def\varinjlim{\mathpalette\varlim@\rightarrowfill@}
\def\varprojlim{\mathpalette\varlim@\leftarrowfill@}
\def\varliminf{\mathpalette\varliminf@{}}
\def\varliminf@#1{\mathop{\underline{\vrule\@depth.2\ex@\@width\z@
   \hbox{$#1\m@th\operator@font lim$}}}}
\def\varlimsup{\mathpalette\varlimsup@{}}
\def\varlimsup@#1{\mathop{\overline
  {\hbox{$#1\m@th\operator@font lim$}}}}
\def\align{\@verbatim \frenchspacing\@vobeyspaces \@alignverbatim
You are using the "align" environment in a style in which it is not defined.}
\let\csname endalign*\endcsname =\endtrivlist
\def\alignat{\@verbatim \frenchspacing\@vobeyspaces \@alignatverbatim
You are using the "alignat" environment in a style in which it is not defined.}
\let\csname endalignat*\endcsname =\endtrivlist
\def\xalignat{\@verbatim \frenchspacing\@vobeyspaces \@xalignatverbatim
You are using the "xalignat" environment in a style in which it is not defined.}
\let\csname endxalignat*\endcsname =\endtrivlist
\def\gather{\@verbatim \frenchspacing\@vobeyspaces \@gatherverbatim
You are using the "gather" environment in a style in which it is not defined.}
\let\csname endgather*\endcsname =\endtrivlist
\def\multiline{\@verbatim \frenchspacing\@vobeyspaces \@multilineverbatim
You are using the "multiline" environment in a style in which it is not defined.}
\let\csname endmultiline*\endcsname =\endtrivlist
\def\arrax{\@verbatim \frenchspacing\@vobeyspaces \@arraxverbatim
You are using a type of "array" construct that is only allowed in AmS-LaTeX.}
\def\tabulax{\@verbatim \frenchspacing\@vobeyspaces \@tabulaxverbatim
You are using a type of "tabular" construct that is only allowed in AmS-LaTeX.}
\let\csname endarrax*\endcsname =\endtrivlist
\let\csname endtabulax*\endcsname =\endtrivlist
 \def\endequation{%
     \ifmmode\ifinner 
      \iftag@
        \addtocounter{equation}{-1} 
        $\hfil
           \displaywidth\linewidth\@taggnum\egroup \endtrivlist
        \global\tag@false
        \global\@ignoretrue   
      \else
        $\hfil
           \displaywidth\linewidth\@eqnnum\egroup \endtrivlist
        \global\tag@false
        \global\@ignoretrue 
      \fi
     \else   
      \iftag@
        \addtocounter{equation}{-1} 
        \eqno \hbox{\@taggnum}
        \global\tag@false%
        $$\global\@ignoretrue
      \else
        \eqno \hbox{\@eqnnum}
        $$\global\@ignoretrue
      \fi
     \fi\fi
 } 
 \newif\iftag@ \tag@false
 \def\TCItag{\@ifnextchar*{\@TCItagstar}{\@TCItag}}
 \def\@TCItag#1{%
     \global\tag@true
     \global\def\@taggnum{(#1)}%
     \global\def\@currentlabel{#1}}
 \def\@TCItagstar*#1{%
     \global\tag@true
     \global\def\@taggnum{#1}%
     \global\def\@currentlabel{#1}}
     \def\tag{\@ifnextchar*{\@tagstar}{\@tag}}
     \def\@tag#1{%
         \global\tag@true
         \global\def\@taggnum{(#1)}}
     \def\@tagstar*#1{%
         \global\tag@true
         \global\def\@taggnum{#1}}
\def\tfrac#1#2{{\textstyle {#1 \over #2}}}%
\def\binom#1#2{{#1 \choose #2}}%
\newcommand{\arrow}{\rightarrow}
\newcommand{\T}{{\mathbb{T}}}
\newcommand{\lgr}{\underline{\rm{gr}}\,}
\newcommand{\cluster}{\mathcal C}
\newcommand{\eps}{\varepsilon}
\newcommand{\one}{\mathbf 1}
\newcommand{\ev}{{\mathbf{E}}}
\newcommand{\pr}{{\mathbf{P}}}
\newcommand{\tr}{{\sf{tr\,}}}
\newcommand{\ZZ}{{\mathbb Z}}
\renewcommand{\emph}[1]{{\bf #1}}
\begin{document}
\begin{frontmatter}
\title{The measurable Kesten theorem }
\runtitle{Measurable Kesten}

\begin{aug}
\author{\fnms{Mikl\'{o}s} \snm{Ab\'{e}rt} \thanksref{m1} \ead[label=e1]{abert@renyi.hu}},
\author{\fnms{Yair} \snm{Glasner} \thanksref{m2} \ead[label=e2]{yairgl@math.bgu.ac.il}}
\and
\author{\fnms{B\'alint} \snm{Vir\'ag} \thanksref{m3} \ead[label=e3]{balint@math.toronto.edu}}

 \affiliation{Alfr\'ed R\'enyi Institute of Mathematics \thanksmark{m1}, Ben-Gurion University of the Negev \thanksmark{m2} and University of Toronto \thanksmark{m3} }

 \address{Alfr\'ed R\'enyi Institute of Mathematics. \\
                Re\'altanoda utca 13-15, H-1053, \\  Budapest,  Hungary. \\
                \printead{e1} \\
	        \phantom{E-mail:\ }}

\address{Department of Mathematics. \\
		Ben-Gurion University of the Negev, \\
		P.O.B. 653, \\
		Be'er Sheva 84105, Israel. \\
		\printead{e2} \\
		\phantom{E-mail:\ }}

\address{Departments of Mathematics and Statistics. \\
		University of Toronto, M5S 2E4 \\
		Canada.  \\
		\printead{e3} \\
\phantom{E-mail:\ }}
 \end{aug}

\begin{abstract}
We give an explicit bound on the spectral radius in terms of the densities of
short cycles in finite $d$-regular graphs. It follows that the
a finite $d$-regular Ramanujan graph $G$ contains a negligible number of cycles of size less than $c
\log \log \left\vert G\right\vert$.

We prove that infinite $d$-regular Ramanujan unimodular random graphs are trees. Through Benjamini-Schramm convergence this leads to the following rigidity result. If most eigenvalues of a $d$-regular finite graph $G$ fall in
the Alon-Boppana region, then the eigenvalue distribution of $G$ is close to the spectral measure of the $d$-regular tree. In particular, $G$ contains
few short cycles.

In contrast, we show that $d$-regular unimodular random graphs with maximal growth are not necessarily trees.
\end{abstract}

\begin{keyword}[class=MSC]
\kwd[Primary ]{05C81}
\kwd{60G50}
\kwd[; secondary ]{82C41}
\end{keyword}

\begin{keyword}
\kwd{girth}
\kwd{spectral radius}
\kwd{Ramanujan graphs}
\kwd{mass transport principal}
\kwd{unimodular random graphs}
\end{keyword}

\end{frontmatter}
\section{Introduction}

Let $G$ be a $d$-regular, finite or infinite connected undirected graph. Let $M$ be
the Markov averaging operator on $\ell ^{2}(G)$. When $G$ is infinite, we
define the {\bf spectral radius} of $G$, denoted $\rho (G)$, to be the
norm of $M$. When $G$ is finite, we want to exclude the trivial eigenvalues
and thus define $\rho (G)$ to be the second largest element in the set of
absolute values of eigenvalues of $M$. For an infinite graph $G$, we have $%
\rho (G)\geq \rho (T_{d})=2\sqrt{d-1}/d$ where $T_{d}$ denotes the $d$%
-regular tree. For finite graphs, the Alon-Boppana theorem \cite{alon} says
that $\lim \inf \rho (G_{n})\geq \rho (T_{d})$ for any infinite sequence $%
(G_{n})$ of finite connected $d$-regular graphs with $\left\vert
G_{n}\right\vert \rightarrow \infty $.

We call $G$ a \emph{Ramanujan} graph, if $\rho (G)\leq \rho (T_{d})$.
Lubotzky, Philips and Sarnak \cite{lps}, Margulis \cite{margulis} and
Morgenstein \cite{morgen} have constructed sequences of $d$-regular
Ramanujan graphs for $d=p^{\alpha }+1$. Also, Friedman \cite{friedman}
showed that random $d$-regular graphs are close to being Ramanujan.

All the Ramanujan graph families above have \emph{large girth}, that is, the
minimal size of a cycle tends to infinity with the size of the graph.
However, the reason for that is group theoretic and not spectral, and a
priori, Ramanujan graphs could have many short cycles.

In this paper we investigate the connection between the densities of short
cycles, the spectral radius and the spectral measure for $d$-regular graphs.
We apply our methods to give explicit
estimate these invariants, then we pass to graph limits and prove limiting results.

\subsection{Explicit estimates}

A cycle (or $k$-cycle) in a graph is a walk of length $k$ that starts and ends at the same vertex.
It is called {\bf nontrivial} if either for some directed non-loop edge $e$,
the number of times the cycle passes
through $e$ differs from the number of
times it passes through the reversal of $e$, or $k=1$ (see Definition \ref{d:trivial}).
For a finite graph $G$ let $\gamma_k (G)$ denote the number of
nontrivial $k$-cycles in $G$ divided by the number of vertices $|G|$ of $G$.

\begin{theorem}
\label{vegesbecsles}Let $G$ be a finite $d$-regular graph with $|G|\ge d^7$. Then for any $%
k\geq 1$ we have
\begin{equation*}
\frac{\rho (G)}{\rho ({\mathbb{T}}_{d})}\geq 1+\frac{\gamma_k (G)}{\nu_{k}}-%
\frac{\frac{3}{2}\log \log _{d-1}|G|+6}{\log _{d-1}|G|}
\end{equation*}%
where
$\nu_k=2\cdot 10^{11} 2^{4k}(d-1)^{3k}k.$
\end{theorem}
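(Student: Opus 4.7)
The plan is to translate the statement into a closed-walk count via the trace of $M^{2n}$, lower-bound the walk count using the density of nontrivial $k$-cycles, and optimize the choice of walk length. The starting point is the trace inequality
$$\tr(M^{2n}) \;=\; \sum_i \lambda_i^{2n} \;\le\; 2 + (|G|-2)\,\rho(G)^{2n}$$
(the constant $2$ allowing for the trivial eigenvalues $\pm 1$ in the bipartite case), together with the identity $\tr(M^{2n}) = d^{-2n}\,W_{2n}(G)$, where $W_{2n}(G)$ denotes the number of closed walks of length $2n$ in $G$. Rearranging converts a lower bound on $W_{2n}(G)/(|G|\,d^{2n})$ into a lower bound on $\rho(G)^{2n}$ up to an additive error of order $1/|G|$.

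The combinatorial core of the proof is to lower-bound $W_{2n}(G)$ by partitioning closed walks according to their non-backtracking core. Walks with trivial core lift bijectively to closed walks in the universal cover $\T_d$, contributing $|G|\cdot R_{2n}$ in total, where $R_{2n}$ is the tree return count. For each nontrivial $k$-cycle $C$ in $G$, closed walks with non-backtracking core equal to (a power of) $C$ are built by choosing a starting vertex on $C$, traversing $C$ a number of times, and inserting tree-like excursions at each visited vertex; the number of such walks of length $2n$ is a convolution of tree-return counts, bounded below by an explicit multiple of $R_{2n-k}$ times combinatorial factors describing how excursions are distributed. Since walks with distinct cores are automatically distinct, cycle contributions add without overcounting, yielding
$$W_{2n}(G) \;\ge\; |G|\,R_{2n} \;+\; |G|\,\gamma(G,k)\,\Phi(n,k,d),$$
where $\Phi(n,k,d)$ is the explicit lower bound on walks per $k$-cycle.

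For the tree contribution, one uses the non-asymptotic Kesten--McKay bound $R_{2n}/d^{2n} \ge c(d)\rho(\T_d)^{2n}/n^{3/2}$ with explicit $c(d)$, derived from the spectral density of $\T_d$. Combining with the trace bound and using $d\,\rho(\T_d) = 2\sqrt{d-1}$ to convert $R_{2n-k}$ into a $\rho(\T_d)^{2n}$ scale, one obtains $\rho(G)^{2n} \ge (c(d)\rho(\T_d)^{2n}/n^{3/2})(1 + \gamma(G,k)\,\Psi(n,k,d)) - 2/(|G|-2)$ for an explicit $\Psi$. The final step is to choose $n$ of order $\tfrac12\log_d|G|$, making $\rho(\T_d)^{2n}|G|$ large enough that the additive error is subdominant; extracting the $2n$-th root and using $(1+x)^{1/(2n)} \approx 1 + x/(2n)$ yields a lower bound $\rho(G)/\rho(\T_d) \ge 1 + \gamma(G,k)/(k\,\nu_k) - O(\log\log_d|G|/\log_d|G|)$. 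The coefficient $\tfrac32$ in the error is exactly the exponent of $n^{3/2}$ in the tree return asymptotics, since $(n^{-3/2})^{1/(2n)}$ expands to $1 - (3/(4n))\log n$ and the choice $n = \tfrac12 \log_d|G|$ converts this into $(3/2)\log\log_d|G|/\log_d|G|$; the additive $6/\log_d|G|$ absorbs the constants $\log c(d)$ and the integer-rounding slack in $n$.

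The main obstacle is making the combinatorial count $\Phi(n,k,d)$ rigorous and explicit enough to match the stated $\nu_k = 2\cdot 10^{11}\cdot 2^{4k}(d-1)^{3k}k$: one must enumerate walks around a nontrivial $k$-cycle while carefully excluding configurations whose non-backtracking core differs from the intended cycle (first excursion step reversing the last cycle edge, etc.), handle the parity of $2n - k$, and convert the resulting convolutional sum into a clean lower bound of $(2\sqrt{d-1})^{-k}$-type decay with polynomial combinatorial prefactors. A secondary obstacle is producing a non-asymptotic Kesten--McKay lower bound on $R_{2n}/d^{2n}$ that is uniform for all $n$ up to order $\log_d|G|$ rather than only in the $n \to \infty$ limit, and propagating its constant into the additive $6/\log_d|G|$ term of the theorem.
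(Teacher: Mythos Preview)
Your approach has a quantitative gap that prevents it from reaching the stated bound. With $\Phi(n,k,d)$ bounded below by a constant multiple of $R_{2n-k}$ as you propose, the ratio $\Psi(n,k,d)=\Phi/R_{2n}$ is of order $(2\sqrt{d-1})^{-k}$, \emph{bounded in $n$}. Then your own expansion $(1+\gamma\Psi)^{1/(2n)}\approx 1+\gamma\Psi/(2n)$ shows that after extracting the root the cycle contribution is
\[
\frac{c\,\gamma(G,k)}{(2\sqrt{d-1})^{k}\cdot 2n}\;=\;\frac{c\,\gamma(G,k)}{(2\sqrt{d-1})^{k}\log_d|G|}
\]
with your choice $2n\sim\log_d|G|$. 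This carries an unwanted factor $1/\log_d|G|$ and does \emph{not} produce the $n$-free term $\gamma(G,k)/(k\nu_k)$ of the theorem. Allowing the core to be a power $C^j$ does not help: $\sum_{j\ge 1}R_{2n-jk}$ is a convergent geometric series of order $R_{2n}$, so $\Psi$ stays bounded. Nor can one sum over products of different cycles without confronting concatenation and cyclic-reduction constraints you have not addressed. (There is also a secondary issue: distinct nontrivial $k$-cycles, which need not be non-backtracking, can share the same non-backtracking core, so ``cycle contributions add without overcounting'' is not automatic.)

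The paper obtains the needed linear-in-$n$ gain by a different mechanism. It partitions $W_{nk}(o,o)$ into \emph{rewiring classes} (Theorem~\ref{t:null and ordinary}): within the class of a nullcycle $w$ one may independently reverse each of the $n$ length-$k$ segments that happens to be a nontrivial cycle, and the probability that the result remains nullhomotopic is at most $\exp(-c_k\chi_w/\ell)$, where $\chi_w$ counts such segments subject to a visit cap $\ell$. This yields $|W_{nk}|\ge \tfrac{1}{14}\sum_{w\in\mathcal N_{nk}}\exp(c_k\chi_w/\ell)$. A Mass-Transport argument (Proposition~\ref{p:main for graphs}) shows that $\ev\chi_w$ equals $n$ times the contribution of a single segment, and Lemma~\ref{l:density to visits} together with the nullcycle visit bound of Proposition~\ref{p:visits} (valid when $\rho(G)\le 19/20$; the theorem is trivial otherwise) bounds that single-segment contribution below by $\gamma_k(G)/(30(4d-4)^k)$. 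The outcome is $\ev\log|W_{nk}|-\log|\mathcal N_{nk}|\ge -3 + c\,n\,\gamma(G,k)$, so after dividing by $nk$ the cycle term $\gamma(G,k)/(k\nu_k)$ survives independent of $n$. The three factors of $(d-1)^k$ in $\nu_k$ come from $c_k$, from $\ell$, and from the $(4d-4)^k$ in Lemma~\ref{l:density to visits}; the large numerical constant comes from the explicit visit bound. Your treatment of the $\tfrac32\log\log_d|G|$ error is correct in spirit, but the multiplicative-per-segment rewiring is the missing idea.
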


Applying this to finite Ramanujan graphs yields that they have few cycles of length $o(\log \log |G|)$.

\begin{theorem}
\label{essgirth}Let $d\geq 3$ and $\beta=(30 \log(d-1))^{-1}$. Then for any $d$-regular finite Ramanujan
graph $G$, the proportion of vertices in $G$ whose $\beta\log \log |G|$%
-neighborhood is a $d$-regular tree is at least $1-c(\log|G|)^{-\beta}$.
\end{theorem}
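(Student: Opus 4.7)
The plan is to use Theorem~\ref{vegesbecsles} to convert the Ramanujan hypothesis $\rho(G)\le\rho(\mathbb{T}_d)$ into an upper bound on the density of short nontrivial cycles, and then to show that any vertex whose $r$-ball fails to be a tree must sit in the $r$-neighborhood of one such cycle. Substituting $\rho(G)/\rho(\mathbb{T}_d)\le 1$ in Theorem~\ref{vegesbecsles} forces, for each $k\ge 1$,
\[
\gamma(G,k)\;\le\; k\nu_k\cdot\frac{\tfrac{3}{2}\log\log_d|G|+6}{\log_d|G|}.
\]

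Set $r=\beta\log\log|G|$ and call a vertex $v$ bad if $B(v,r)$ is not a tree. Take a BFS tree $T$ of $B(v,r)$ rooted at $v$ and any edge $(u,w)\subset B(v,r)$ outside $T$; then the tree paths from $p$ to $u$ and from $p$ to $w$ (where $p$ is the least common ancestor of $u$ and $w$ in $T$), together with the edge $(u,w)$, form a simple cycle $C\subset B(v,r)$ of length at most $2r+1$. Because $H_1$ of a graph is its cycle space, $C$ represents a nonzero homology class, so $C$ is a nontrivial cycle in the sense of the paper. Hence every bad $v$ lies in $B(u_0,r)$ for some vertex $u_0$ of some nontrivial cycle of length $\le 2r+1$, and the standard regular-graph bound $|B(u_0,r)|\le 3(d-1)^r$ (valid for $d\ge 3$) gives
\[
\#\{\text{bad }v\}\;\le\; 3(d-1)^r\sum_{k=3}^{2r+1}|G|\,\gamma(G,k).
\]

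Inserting $\nu_k = 2\cdot 10^{11}\cdot 2^{4k}(d-1)^{3k}k$ and noting that the sum is geometric with dominant term at $k=2r+1$ yields
\[
\frac{\#\{\text{bad}\}}{|G|}\;\le\; C\cdot\operatorname{poly}(r)\cdot\frac{\log\log|G|}{\log|G|}\cdot(d-1)^{7r}\cdot 16^{2r}.
\]
Substituting $r=\beta\log\log|G|$ with $\beta=(30\log(d-1))^{-1}$ converts each exponential in $r$ into a power of $\log|G|$: using $\log(d-1)\ge\log 2$, the factor $(d-1)^{7r}\cdot 16^{2r}$ is at most $(\log|G|)^{7/30+4/15}=(\log|G|)^{1/2}$, so the bad-vertex fraction is at most $(\log|G|)^{-1/2+o(1)}$, comfortably smaller than $(\log|G|)^{-\beta}$ since $\beta\le(30\log 2)^{-1}<1/2$; the $o(1)$ factors get absorbed into the constant $c$. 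The only conceptually delicate step is producing a genuinely nontrivial short cycle from a non-tree ball, which the BFS construction achieves. The remainder is careful bookkeeping to keep the geometric overhead $(d-1)^r$ and the large constants inside $\nu_k$ dominated by the $(\log|G|)^{-1}$ gain coming from Theorem~\ref{vegesbecsles}; the loose factor $30$ in the definition of $\beta$ provides ample slack.
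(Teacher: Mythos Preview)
Your argument is correct and follows essentially the same route as the paper's proof (given as Theorem~\ref{t:ess girth r graphs}): bound $\gamma(G,k)$ via Theorem~\ref{vegesbecsles}, then link a non-tree $r$-ball to the presence of a short nontrivial cycle. One small oversight: your sum $\sum_{k=3}^{2r+1}$ should start at $k=1$, since loops and multi-edges also make $B(v,r)$ fail to be a tree, and your BFS construction then produces a cycle of length $1$ or $2$; this does not affect the numerics.

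The only substantive difference is in the geometric step. You locate a \emph{simple} cycle $C\subset B(v,r)$ and conclude that $v\in B(u_0,r)$ for some $u_0\in C$, paying an extra factor $(d-1)^r$. The paper instead notes that the closed walk $v\to u\to w\to v$ (BFS paths plus the extra edge) is itself a nontrivial cycle \emph{through $v$} of length at most $2r+1$---nontrivial in the homological sense even with the backtracking tail---so $\pr(v\text{ bad})\le\sum_{j\le 2r+1}\mathbf{E}\gamma_j(G)$ directly, with no ball-size overhead; only the loop case is handled separately by a mass-transport argument equivalent to your bound. This yields a slightly larger admissible $\beta$ (close to $(6\log(d-1)+8\log 2)^{-1}$), but for the stated $\beta=(30\log(d-1))^{-1}$ your cruder estimate suffices, exactly as you observe.
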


This answers a question of Lubotzky \cite[Question 10.7.1]{lubbook} who
asked for a clarification on the connection between eigenvalues and girth.
Note that until now, it was not even known whether a finite
Ramanujan graph cannot have a positive density of short cycles.

It is easy to see that infinite Ramanujan graphs can have arbitrarily many
short cycles. In fact, every connected, infinite $d$-regular graph can be
embedded as a subgraph of a Ramanujan graph with degree at most $d^{2}$ (see
Corollary \ref{c:inf_ram_criterion}). However, it turns out that cycles of bounded size must be
sparse in a Ramanujan graph.

\begin{theorem}
\label{distance}Let $G$ be an infinite $d$-regular graph such that every
vertex in $G$ has distance at most $R$ from a $k$-cycle. Then
\begin{equation*}
\rho (G)\geq \rho (T_{d})+\frac{d-2}{d(d-1)^{2\left\lfloor
R+k/2+1\right\rfloor }}\text{.}
\end{equation*}
\end{theorem}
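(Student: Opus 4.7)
The plan is to apply the variational bound $\rho(G) \geq \langle Mf, f\rangle / \|f\|^2$ with a carefully chosen truncated "spherical function" built around a vertex $v_0 \in V(G)$. Concretely, take
$$
f(u) = (d-1)^{-d_G(v_0, u)/2} \cdot \mathbf{1}_{B(v_0, N)}(u),
$$
with $N \to \infty$. On the $d$-regular tree $T_d$ this is the spherical function sitting at the edge of the $\ell^2$-spectrum, so $Mf = \rho(T_d) f$ everywhere except at the root and on the boundary sphere of $B(v_0, N)$. Writing $a(u), b(u), c(u)$ for the number of neighbors of $u$ at distance $d_G(v_0, u)+1, d_G(v_0, u), d_G(v_0, u)-1$ respectively, a direct computation yields the pointwise defect
$$
(Mf - \rho(T_d) f)(u) = \frac{f(u)}{d\sqrt{d-1}} \bigl[(d-2)(c(u)-1) + b(u)(\sqrt{d-1}-1)\bigr]
$$
at every interior vertex $u$. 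Because $c(u) \geq 1$ for every $u \neq v_0$, this defect is \emph{always} nonnegative, and strictly positive precisely when $u$ has an extra parent ($c \geq 2$) or a sibling ($b \geq 1$) --- i.e., whenever $u$ lies on a "closing" cycle edge.

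The hypothesis then produces many such positive defects. At the "top" $t$ of any $k$-cycle $C$ (the vertex of $C$ farthest from $v_0$) one finds $(c, b) = (2, 0)$ for $k$ even, while for $k$ odd the two endpoints of the top edge each have $(c, b) = (1, 1)$; in either case the total positive defect contributed by $C$ is bounded below by $c_d \cdot f(t)^2$, where $c_d > 0$ depends only on $d$. Since every $u \in V(G)$ is within $R$ of some $k$-cycle $C_u$, its top $t_u$ lies within distance $\leq R + \lceil k/2\rceil$ of $u$, so the set of cycle tops forms an $(R + \lceil k/2\rceil)$-covering of $V(G)$. Each cycle top $t$ is the top of $C_u$ for at most $|B(R + \lceil k/2\rceil)|$ different vertices $u$, so a double counting gives
$$
\sum_{\text{tops } t} (d-1)^{-d_G(v_0, t)} \;\geq\; \frac{(d-1)^{-(R + \lceil k/2\rceil)}}{|B(R + \lceil k/2 \rceil)|}\,\|f\|^2,
$$
and hence the total positive contribution from cycle tops to $\langle (M - \rho(T_d))f, f\rangle$ is at least a constant (in $N$) multiple of $\|f\|^2$.

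It remains to control the two negative contributions: the root defect at $v_0$, which equals $-(d-2)/(d\sqrt{d-1})$, and the boundary correction from truncating $f$ at $B(v_0, N)$, which is bounded by $1/\sqrt{d-1}$ using the fact that every boundary vertex has at most $d-1$ neighbors in the exterior sphere. These sum to at most $\rho(T_d)$ in absolute value, independently of $N$, and therefore become negligible when divided by $\|f\|^2_G \to \infty$. Combining the three estimates and letting $N \to \infty$ gives
$$
\rho(G) \;\geq\; \rho(T_d) \;+\; \frac{c_d}{|B(R+\lceil k/2\rceil)|\,(d-1)^{R+\lceil k/2\rceil}},
$$
and the bound stated in the theorem then follows from the standard tree estimate $|B(L)| \leq d(d-1)^L$.

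The main obstacle is the careful bookkeeping in the second step: verifying that the pointwise defect formula is truly nonnegative on every vertex of $G$ (so that one is free to drop the non-cycle-top positive contributions when lower-bounding), and carrying out the covering-by-cycle-tops double count that governs the exponent $2\lfloor R + k/2 + 1\rfloor$ appearing in the denominator. The remaining calculations --- the pointwise defect formula and the uniform bound on the root plus boundary corrections by $\rho(T_d)$ --- are direct but somewhat tedious.
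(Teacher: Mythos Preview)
Your overall strategy---Rayleigh quotient with a truncated radial test function, observing that cycles force pointwise nonnegative ``defects'' in $(M-\rho(T_d))f$, and lower-bounding the total defect by a covering/double-counting argument over cycle tops---matches the paper's proof. The defect formula you derive is correct, and the covering argument is essentially the paper's.

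There is, however, a genuine gap in the last step. You assert that the root and boundary corrections (bounded by $\rho(T_d)$) become negligible because $\|f\|_G^2\to\infty$ as $N\to\infty$. With your choice $f(u)=(d-1)^{-d(v_0,u)/2}$ one has
\[
\|f\|^2=\sum_{r=0}^N |S_r|\,(d-1)^{-r},
\]
and this need \emph{not} diverge. Take for instance the $3$-regular ladder graph (two copies of $\mathbb{Z}$ joined by rungs): every vertex lies on a $4$-cycle, yet $|S_r|\le 4$ for all $r$, so $\|f\|^2\le 1+3/2+4\sum_{r\ge2}2^{-r}<5$ stays bounded. For such graphs your inequality reads $\rho(G)\ge\rho(T_d)+C-\rho(T_d)/\|f\|^2$, and since $C$ is of order $(d-1)^{-2L'}$ while the last term is of order $1$, the right-hand side can be negative.

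The paper avoids this by using the spherical function
\[
g(n)=\frac{d+(d-2)n}{d\,(d-1)^{n/2}}
\]
in place of the pure exponential. This $g$ satisfies the eigenvalue equation \emph{exactly} at the root (so there is no root defect), and the extra linear factor gives $|S_r|g_r^2=s_r\cdot(d+(d-2)r)^2/d^2$ with $s_r:=|S_r|(d-1)^{-r}$ monotone nonincreasing. Then
\[
\frac{|S_R|\,g_R^2}{\sum_{r=0}^R|S_r|\,g_r^2}\;\le\;\frac{(d+(d-2)R)^2}{\sum_{r=0}^R(d+(d-2)r)^2}\;\longrightarrow\;0,
\]
using only the monotonicity of $s_r$, regardless of whether $\sum_r s_r$ diverges. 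With this test function the rest of your argument goes through unchanged.
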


\subsection{Graph limits and spectral measure}

The spectral measure $\mu _{T_{d}}$ of the Markov operator on $T_{d}$, also
known as the Plancherel measure of $T_{d}$ or the Kesten-McKay measure, has
density
\begin{equation*}
\frac{d}{2\pi }\frac{\sqrt{\rho ^{2}(T_{d})-t^{2}}}{1-t^{2}}\text{.}
\end{equation*}%
Let $(G_{n})$ be a sequence of finite $d$-regular graphs. We say that $%
(G_{n})$ has \emph{essentially large girth}, if for all $k$ the denisty of nontrivial cycles satisfies
\begin{equation*}
\lim_{n\rightarrow \infty }{\gamma_{k}(G_{n})}
=0.
\end{equation*}%

For a finite graph $G$, let $\mu_{G}$ denote the eigenvalue distribution of the Markov operator on $G$.
Then the following are equivalent (see Proposition \ref{ekvivalens}):

\begin{enumerate}
\item $(G_{n})$ has essentially large girth;

\item $(G_{n})$ converges to $T_{d}$ in Benjamini-Schramm convergence;

\item $\mu _{G_{n}}$ weakly converges to $\mu _{T_{d}}$.
\end{enumerate}

A sequence $(G_{n})$ of finite $d$-regular graphs is \emph{weakly Ramanujan} if
\begin{equation*}
\lim_{n\rightarrow \infty }\mu _{G_{n}}\left( [-\rho (T_{d}),\rho
(T_{d})]\right) =1\text{,}
\end{equation*}%
that is, if most eigenvalues of $G_{n}$ fall in the minimal possible
supporting region. Note that a weakly Ramanujan sequence is not necessarily
an expander sequence. In fact, the graphs $G_{n}$ do not even have to be
connected.

From 1) $\Longrightarrow $ 3) and the fact that $\mu _{T_{d}}$ is
continuous, it follows immediately that every graph sequence of essentially
large girth is weakly Ramanujan (in contrast, $\rho$ is only lower semicontinuous with respect to
Benjamini-Schramm convergence of graphs). We show that the converse also holds.

\begin{theorem}
\label{weakram}Let $(G_{n})$ be a weakly Ramanujan sequence of finite $d$%
-regular graphs. Then $(G_{n})$ has essentially large girth.
\end{theorem}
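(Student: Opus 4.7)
The plan is to pass to a Benjamini-Schramm subsequential limit and apply the paper's structural result (mentioned in the abstract and proved earlier) that every infinite $d$-regular Ramanujan unimodular random graph is a tree. By the equivalence $2)\Longleftrightarrow 1)$ recorded in the excerpt, it suffices to show that every Benjamini-Schramm subsequential limit of $(G_{n})$ equals $T_{d}$ almost surely. Since the space of rooted $d$-regular graphs is compact under Benjamini-Schramm convergence, any subsequence has a further subsequence converging to some unimodular random rooted graph $(U,o)$; I will argue that $(U,o)=(T_{d},o)$ a.s.

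Next, I would use that spectral measures are Benjamini-Schramm continuous. The $k$-th moment $\int t^{k}\,d\mu_{G}$ is the density of closed length-$k$ walks from a uniformly chosen vertex of $G$, a local combinatorial quantity determined by $R$-balls for $R\geq k/2$. Consequently $\mu_{G_{n}}\to \ev\mu_{(U,o)}$ weakly along the convergent subsequence, where $\mu_{(U,o)}$ is the random spectral measure of the Markov operator of $U$ against $\delta_{o}$. The weakly Ramanujan hypothesis $\mu_{G_{n}}([-\rho(T_{d}),\rho(T_{d})])\to 1$ combined with the Portmanteau theorem forces the support of $\ev\mu_{(U,o)}$ to lie in $[-\rho(T_{d}),\rho(T_{d})]$.

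Now I would promote this support statement to an almost sure bound $\rho(U)\leq \rho(T_{d})$. Since $\mu_{(U,o)}$ is absolutely continuous with respect to $\ev\mu_{(U,o)}$ in the sense that the former is supported inside the support of the latter for $o$-almost every realization, we get that $\mu_{(U,o)}$ is supported in $[-\rho(T_{d}),\rho(T_{d})]$ almost surely. Unimodularity (via the mass transport principle) lets one upgrade "at $o$" to "at every vertex of $U$", and the spectral radius of the Markov operator on a connected graph is the radius of the union of vertex spectral supports; hence $\rho(U)\leq \rho(T_{d})$ a.s., i.e.\ $U$ is a Ramanujan unimodular random graph. Invoking the paper's theorem that such objects are trees yields $U=T_{d}$ a.s., so $G_{n}\to T_{d}$ in Benjamini-Schramm and essentially large girth follows.

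The main obstacle is the third step, namely converting the support of the \emph{expected} spectral measure at the root into an \emph{almost sure} spectral radius bound for the random graph $U$. One cannot simply pass the inequality $\rho(G_{n})\leq\rho(T_{d})$ to the limit since the weakly Ramanujan hypothesis does not assert this, and even $\rho$ itself is only lower semicontinuous under Benjamini-Schramm limits (as noted in the excerpt). The right tool is unimodularity together with the fact that $\ev\mu_{(U,o)}$ dominates the random measure $\mu_{(U,o)}$; once this is set up carefully, the rest is a clean application of the already-established Ramanujan-URG-is-a-tree theorem.
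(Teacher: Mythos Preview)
Your proposal is correct and follows essentially the same route as the paper's proof: pass to a Benjamini--Schramm subsequential limit $U$, use weak convergence of spectral measures together with the weakly Ramanujan hypothesis to force $\mu_{(U,o)}$ (and hence $\rho(U)$) inside $[-\rho(T_d),\rho(T_d)]$ almost surely, and then invoke Theorem~\ref{unimod}. Two small remarks: the paper explicitly checks that $U$ is a.s.\ infinite before applying Theorem~\ref{unimod} (this also falls out of your argument, since a finite component forces mass at $1$ in $\mu_{(U,o)}$), and your mass-transport upgrade ``from $o$ to all vertices'' is unnecessary---for a connected graph the operator norm already equals the supremum of the support of $\mu_{\delta_o}$, which is exactly what the paper cites from Kesten.
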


Theorem \ref{weakram} can also be looked at as a rigidity result, as it says
that if we force most of the eigenvalues of the Markov operator of a large
finite graph inside the Alon-Boppana bound, then their distribution will be
close to $\mu _{T_{d}}$.

In the proof of Theorem \ref{weakram}, it is the use of Benjamini-Schramm
convergence that allows us to get rid of the bad eigenvalues and clear up
the picture. Limit objects with respect to this convergence are random rooted graphs $(G,o)$ called
unimodular random graphs. We will sometimes drop the root $o$ from the notation.
The notion has been introduced in \cite%
{aldouslyons}: for the definition, see Section 2. Unimodular random graphs
tend to behave like vertex transitive graphs in many senses. Theorem \ref%
{weakram} now follows from the following.

\begin{theorem}
\label{unimod}Let $(G,o)$ be a $d$-regular unimodular random graph that is
infinite and Ramanujan a.s. Then $G=T_{d}$ a.s.
\end{theorem}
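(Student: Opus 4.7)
I would argue by contradiction: assume $\pr(G \ne T_d) > 0$ and show this forces $\rho(G) > \rho(T_d)$ on a set of positive measure, contradicting the Ramanujan hypothesis. First, using the ergodic decomposition of unimodular random graphs, I may assume the law of $(G, o)$ is ergodic. The event $\{G \cong T_d\}$ is isomorphism-invariant and hence has probability $0$ or $1$ under ergodicity; if it is $1$ we are done, so assume it is $0$. Then $G$ almost surely contains at least one cycle.

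Second, I would apply the mass transport principle to extract constants $k, R \in \N$ and $p > 0$ such that with probability at least $p$, the root $o$ lies within graph distance $R$ of some cycle of length at most $k$. The idea is that each cycle can send unit mass to each of its vertices and then to every vertex in the $R$-ball around it; if all such probabilities vanished for every $R$ and $k$, the mass transport identity would force the expected number of short cycles reachable from $o$ to be zero, contradicting the previous step.

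Third, I would convert this positive-probability cycle event into a strict lower bound on $\rho(G)$. For an ergodic unimodular random graph, $\rho(G)$ almost surely equals $\limsup_n \ev[p_{2n}(o,o)]^{1/(2n)}$, where $p_{2n}$ is the simple random walk return probability. Imitating the walk-counting method that underlies Theorem~\ref{distance}, every configuration in which $o$ is close to a short cycle produces extra closed walks of length $2n$ beyond those coming from lifts of walks in the universal cover $T_d$. Averaging over the random root should then yield
\[
\ev[p_{2n}(o,o)] \;\ge\; p_{2n}^{T_d}(o,o) \;+\; p \cdot \delta_n,
\]
with $\delta_n$ pushing the exponential growth rate strictly above $\rho(T_d)^2$, and hence $\rho(G) > \rho(T_d)$.

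The main obstacle is this last step. Theorem~\ref{distance} is a deterministic estimate that relies on \textbf{every} vertex being close to a cycle, whereas unimodularity here gives this only on a positive-probability event. So one must show that the extra closed walks near the cycle-witnessing vertices still dominate the averaged quantity $\ev[p_{2n}(o,o)]$ exponentially. Equivalently, one should construct a trial function in $\ell^2(G)$ supported near the witnessing cycles whose Rayleigh quotient with the Markov operator exceeds $\rho(T_d)$; the unimodular mass transport principle is the natural tool to convert the positive density of witnesses into the required norm estimate on $M$.
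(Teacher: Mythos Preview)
Your proposal has a genuine gap in the third step, and you essentially acknowledge it yourself without resolving it. The claim that proximity of $o$ to a short cycle yields extra closed walks that push the exponential growth rate of $\ev[p_{2n}(o,o)]$ strictly above $\rho(T_d)^{2n}$ is false as stated: a single cycle near the root adds only a constant-factor surplus of closed walks at each length, not an exponential one. Indeed, the paper explicitly notes (and proves in Corollary~\ref{c:inf_ram_criterion}) that there exist deterministic infinite $d$-regular Ramanujan graphs that are \emph{not} trees. So ``the root sees a cycle with positive probability'' cannot by itself force $\rho(G)>\rho(T_d)$ realization-by-realization, and the Rayleigh-quotient argument of Theorem~\ref{distance} genuinely needs \emph{every} vertex to be near a cycle --- a hypothesis that fails for a typical realization of a unimodular random graph with merely positive cycle density.

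What the paper actually does is quite different from adapting Theorem~\ref{distance}. It compares the number of closed walks $|W_{nk}(o,o)|$ not to $p_{2n}^{T_d}$ but to the number of \emph{nullhomotopic} walks $|\mathcal N_{nk}|$, via a rewiring construction (Theorem~\ref{t:null and ordinary}): each nullcycle that traverses many nontrivial $k$-cycles sits in a large equivalence class of closed walks. The Mass Transport Principle is then used in a more delicate way than in your Step~2 --- not to establish that cycles exist with positive probability, but to show that the expected count of nontrivial $k$-cycles encountered along a random nullcycle is the same at every time segment, so that it equals $n$ times the time-zero contribution (Proposition~\ref{p:main for graphs}). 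Combined with visit estimates for nullcycles (Proposition~\ref{p:visits}, Lemma~\ref{l:density to visits}), this yields the quantitative bound $\ev\log\rho(G)\ge\log\rho(T_d)+\ev\gamma_k(G)/\nu_k$ of Theorem~\ref{t:main}, from which Theorem~\ref{unimod} follows immediately. The essential idea you are missing is this passage through nullcycles and rewiring; a direct test-function or walk-counting argument in the spirit of Theorem~\ref{distance} does not seem to suffice.
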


This is Kesten's theorem for vertex transitive graphs (\cite{kesten1} and \cite{paschke}).
We give the following two quantitative versions of Theorem \ref{unimod}. For infinite $d$-regular unimodular random graphs
\begin{equation}\label{e:urg bounds}
\ev \log \rho (G)-\log \rho ({\mathbb{T}}_{d})\geq \begin{cases} \frac{1}{\nu_k}\ev \gamma_k(G,o)  \\-\frac{1}{k}\mathbf{E} \log
\kappa^*_{k}(G,o).
\end{cases}
\end{equation}
Here  $\gamma_k(G,o)$ denotes
the number of nontrivial $k$-cycles starting at $o$, and $\nu_k$ is a constant defined in Theorem \ref{vegesbecsles}. Note that for a fixed finite graph $G$ the density $\gamma_k(G)$ equals the expected value of $\gamma_k(G,o)$ over a uniformly chosen root $o$ of $G$.

To define $\kappa^*_k(G,o)$, consider all paths of length $k$ from
$o$ to a vertex $v$. After attaching a fixed path from $v$ to $o$, these
can be used as generators for a random walk on the fundamental group of $G$.
Then $\kappa^*_k(G,o)$ is the geometric average of the
spectral radii of these random walks when $v$ is a chosen randomly as the position of the
infinite nullcycle  (defined in Corollary
\ref{c:inifinite nullcycle}) at time $k$ (see \eqref{e:kappa}, \eqref{e:kappa*} for more details).

Note that if our unimodular random graph $G$ is not a tree, then for $k$ large enough, with positive probability,
the Cayley graph of the subgroup of the fundamental group given by above loops as generators has spectral radius less than one. Thus the second bound clearly implies Theorem \ref{unimod}.

The first bound in \eqref{e:urg bounds} is proved in Section \ref{s:explicit bounds on rho} as Theorem \ref{t:main}; the proof uses results from Sections \ref{s:Cycles in Td} and  \ref{s:nullcycles}. It is just the infinite version of Theorem \ref{vegesbecsles}. The advantage of
this approach is the linear lower estimate on how the spectral radius grows
compared to the tree: we believe this to be sharp. The major advantage of the second bound in \eqref{e:urg bounds} (proved in Section \ref{s:rho and pi1}) is that it is
sharp in limit as $k\to\infty$, however, $\kappa_k^*$ seems to be hard to
compute. 


Theorem \ref{weakram} is related to a paper of Serre \cite{serre}
that studies asymptotic properties of graph sequences. Let $d_{k}(G)$ denote
the number of primitive, cyclically reduced cycles of length $k$ in the
graph $G$. Recall that a cycle is primitive if it is not a proper power of
another cycle.

\begin{theorem}[Serre]
Let $(G_{n})$ be a sequence of finite $d$-regular graphs, such that the
limit
\begin{equation*}
\gamma' _{k}=\lim_{n\rightarrow \infty }d_{k}(G_{n})/|G_{n}|
\end{equation*}
exists for every $k$. Then the measures $\mu _{G}$ weakly converge. If the
series
\begin{equation*}
\sum_{k=1}^{\infty }\gamma' _{k}(d-1)^{-k/2}
\end{equation*}
converges then the sequence of graphs is weakly Ramanujan and the limiting
measure is absolutely continuous with respect to the Lebesgue measure on $%
\left[ -\rho (T_{d}),\rho (T_{d})\right] $.
\end{theorem}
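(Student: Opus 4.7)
The plan is the moment method combined with an explicit trace formula that expresses spectral moments of $\mu_G$ as universal linear combinations of the cycle counts $d_k(G)$. For any $d$-regular graph $G$, lifting a closed walk of length $n$ based at a vertex $v$ to the universal cover $T_d$ produces a walk of length $n$ from a fixed preimage $\tilde v$ to $g\tilde v$, as $g$ ranges over $\pi_1(G,v)$. The number of walks in $T_d$ between two vertices at distance $k$ depends only on $n,k,d$; denote it $q_n(k)$. Summing over starting vertices yields
\begin{equation*}
\int t^n\, d\mu_G(t) \;=\; \frac{q_n(0)}{d^n} \;+\; \sum_{k=1}^n \frac{q_n(k)}{d^n}\cdot\frac{N_k(G)}{|G|},
\end{equation*}
where $N_k(G)=\sum_{j\mid k}j\,d_j(G)$ counts based cyclically reduced closed walks of length $k$ in $G$. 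The leading term $q_n(0)/d^n$ is exactly the $n$th moment of the Kesten-McKay measure $\mu_{T_d}$.

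Under the assumption $d_k(G_n)/|G_n|\to\gamma'_k$, one has $N_k(G_n)/|G_n|\to N_k(\gamma'):=\sum_{j\mid k}j\gamma'_j$, so each moment of $\mu_{G_n}$ converges to an explicit number $m_n$. Since every $\mu_{G_n}$ is supported in $[-1,1]$, the Fr\'echet--Shohat theorem upgrades moment convergence to weak convergence to the unique probability measure $\mu$ with moments $m_n$, which proves the first statement. For the second, the well-known Green's-function factorization $G_k(u)=G_0(u)F(u)^k$ on $T_d$ (with $F$ the distance-one first-passage generating function) yields the pointwise bound $q_n(k)/d^n\le C\,\rho(T_d)^n(d-1)^{-k/2}$. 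Summability of $\sum_k\gamma'_k(d-1)^{-k/2}$ then forces $|m_n|=O(\rho(T_d)^n)$, and the Hamburger support criterion confines $\mu$ to $[-\rho(T_d),\rho(T_d)]$. This is exactly the weakly Ramanujan conclusion.

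To obtain absolute continuity, integrate the trace decomposition against $(z-t)^{-1}$ to get $s_\mu(z)=s_{\mu_{T_d}}(z)+\sum_{k\ge 1}N_k(\gamma')\,S_k(z)$, where $S_k$ is the Stieltjes transform of a probability measure $\sigma_k$ on $[-\rho(T_d),\rho(T_d)]$. A boundary-value analysis of $G_k=G_0F^k$ across the Kesten-McKay cut shows $\sigma_k$ is a.c. with density bounded by $C(d-1)^{-k/2}$. The summability hypothesis then makes the series of densities converge in $L^\infty([-\rho(T_d),\rho(T_d)])$, so $\mu$ has bounded density on the Kesten-McKay interval, i.e., is absolutely continuous there. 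The main technical obstacle is the quantitative $(d-1)^{-k/2}$ control of both $q_n(k)/d^n$ and $\|\sigma_k\|_\infty$; once these are in hand, the remainder is routine generating-function bookkeeping.
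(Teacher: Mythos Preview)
The paper does not prove this theorem. It is stated as a result of Serre with citation \cite{serre}, serving only as context: the paper combines it with its own Theorem~\ref{weakram} to deduce the subsequent Corollary. There is therefore no ``paper's proof'' to compare your attempt against.

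On its own merits, your sketch follows the classical route and is essentially the argument in Serre's paper: lift closed walks to the universal cover, express moments of $\mu_{G}$ as $q_n(0)/d^n$ plus a finite linear combination of the $N_k(G)/|G|$, pass to the limit moment-by-moment, and then use the factorization $G_k=G_0F^k$ together with $|F|\le (d-1)^{-1/2}$ on the boundary of the Kesten--McKay disc to control support and density. One point that deserves more care in your write-up: you pass from the hypothesis $\sum_k \gamma'_k(d-1)^{-k/2}<\infty$ to convergence of $\sum_k N_k(\gamma')(d-1)^{-k/2}$, but $N_k(\gamma')=\sum_{j\mid k} j\gamma'_j$ carries an extra factor of $j$. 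Reorganizing the double sum gives
\[
\sum_{k\ge 1} N_k(\gamma')(d-1)^{-k/2}=\sum_{j\ge 1} j\gamma'_j\sum_{m\ge 1}(d-1)^{-jm/2}=\sum_{j\ge 1}\frac{j\gamma'_j(d-1)^{-j/2}}{1-(d-1)^{-j/2}},
\]
so you either need the slightly stronger input $\sum_j j\gamma'_j(d-1)^{-j/2}<\infty$, or (as Serre does) a sharper organization of the Stieltjes-transform expansion that avoids this loss. This is a bookkeeping issue rather than a structural one, but you should address it explicitly.
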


Theorem \ref{weakram} now immediately implies the following.

\begin{corollary}
If the series
\begin{equation*}
\sum_{k=1}^{\infty }\gamma' _{k}(d-1)^{-k/2}
\end{equation*}%
converges, then $\gamma' _{k}=0$ for all $k$ and the limiting measure of $\mu_{G_{n}}$ equals $\mu _{T_{d}}$.
\end{corollary}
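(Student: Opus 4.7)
The plan is to chain together Serre's theorem, Theorem \ref{weakram}, and the equivalence of essentially large girth with convergence of $\mu_{G_n}$ to $\mu_{T_d}$. Assume the sequence $(G_n)$ satisfies the hypothesis of Serre's theorem (so $\gamma'_k = \lim_n d_k(G_n)/|G_n|$ exists for every $k$) and additionally that $\sum_k \gamma'_k (d-1)^{-k/2}$ converges.

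First, by the second conclusion of Serre's theorem, the convergence of this series implies that $(G_n)$ is weakly Ramanujan. Applying Theorem \ref{weakram} then gives that $(G_n)$ has essentially large girth, i.e.\ $c_L(G_n)/|G_n| \to 0$ for every $L \ge 1$.

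Next, observe that every primitive cyclically reduced cycle of length $k$ in $G_n$ is in particular a cycle of length $k$, so $d_k(G_n) \le c_k(G_n)$. Dividing by $|G_n|$ and letting $n \to \infty$ yields
\begin{equation*}
\gamma'_k = \lim_{n \to \infty} \frac{d_k(G_n)}{|G_n|} \le \lim_{n \to \infty} \frac{c_k(G_n)}{|G_n|} = 0,
\end{equation*}
which forces $\gamma'_k = 0$ for all $k$. Finally, by the equivalence $(1) \Longleftrightarrow (3)$ stated before Theorem \ref{weakram} (or equivalently by Proposition \ref{ekvivalens}), essentially large girth is equivalent to $\mu_{G_n}$ weakly converging to $\mu_{T_d}$, so the limiting measure is $\mu_{T_d}$.

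No step is truly hard: once Theorem \ref{weakram} is in hand, the corollary is essentially a bookkeeping exercise. The only thing to be careful about is the trivial inequality $d_k \le c_k$, which relies on the definitions of these counts agreeing on the set of cyclically reduced closed walks; since cyclically reduced closed walks of length $k$ are a subset of all closed walks of length $k$, there is nothing to verify beyond unpacking the definitions.
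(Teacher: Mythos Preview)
Your proof is correct and follows exactly the route the paper intends: the paper simply states that the corollary ``now immediately'' follows from Theorem \ref{weakram}, and you have spelled out the chain Serre $\Rightarrow$ weakly Ramanujan $\Rightarrow$ (Theorem \ref{weakram}) essentially large girth $\Rightarrow$ $\gamma'_k=0$ and $\mu_{G_n}\to\mu_{T_d}$ via Proposition \ref{ekvivalens}. There is nothing to add.
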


It is natural to ask whether a version of Theorem \ref{unimod} holds for growth instead
of spectral radius. In Section \ref{s:growth} we show that the answer is negative:

\begin{theorem}
\label{growth}There exists an infinite $d$-regular unimodular random graph
with the same growth as $T_{d}$ but not equal to $T_{d}$.
\end{theorem}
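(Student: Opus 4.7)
The plan is to construct an explicit example of an infinite $d$-regular unimodular random graph $G$ that is not $T_d$ almost surely but whose exponential volume growth rate equals $d-1$.

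I would construct $G$ as a factor of iid on $T_d$. Attach iid uniform $[0,1]$ labels $(U_v)_{v\in T_d}$ and use them in an $\mathrm{Aut}(T_d)$-equivariant way to select a sparse invariant random family of matched pairs of ``cycle sites,'' and at each pair perform a bounded-radius rewiring that preserves all vertex degrees and introduces a cycle. Because any elementary $2$-edge swap on a tree yields another tree, the rewiring must coordinate several edges along a path joining the matched sites; a convenient choice is a ``zipper'' rewiring along the geodesic connecting a matched pair, which introduces one short cycle per pair while locally preserving $d$-regularity and connectedness.

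Unimodularity of the resulting random rooted graph $(G,o)$ is automatic since $G$ is a factor of iid on $T_d$. The graph is $d$-regular and infinite by construction, and $G\ne T_d$ almost surely because, by the sparsity and $\mathrm{Aut}(T_d)$-invariance of the matching, the event that $o$ lies in a cycle of bounded length has positive probability. For the growth rate, the upper bound $|B_r(G,o)|^{1/r}\le d-1$ is automatic for any $d$-regular graph; for the matching lower bound, couple $G$ with $T_d$ via the common labels so that they agree outside bounded neighborhoods of modification sites. By choosing a rare local label pattern (for instance, local minima over balls of radius $R$), the density of sites can be made arbitrarily small, and each modification reduces ball sizes by only a bounded additive amount, so that
\[
\mathbf{E}\,|B_r(G,o)| \;\ge\; |B_r(T_d,o)| \;-\; C\,r\,(d-1)^{-R}(d-1)^r,
\]
which for $R$ large yields $\lim_r (1/r)\log \mathbf{E}\,|B_r(G,o)| = \log(d-1)$, matching $T_d$.

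The main obstacle is designing the cycle-creating rewiring so that it preserves $d$-regularity \emph{and} connectedness while actually introducing a cycle: elementary swaps in a tree always produce another tree or disconnect it, so the rewiring must coordinate several edges along the zipper, and one must verify that distinct matched pairs do not conflict. Enforcing a minimum separation between the selected sites via local label minima is a standard factor-of-iid device that resolves both issues at once.
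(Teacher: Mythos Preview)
There is a genuine obstruction in your plan: no bounded-radius, degree-preserving rewiring of $T_d$ that keeps the graph connected can introduce a cycle. Suppose $G$ is $d$-regular, connected, on the vertex set of $T_d$, and agrees with $T_d$ outside a finite ball $B$. For each $v\in B$ the $G$-edges from $v$ leaving $B$ are unchanged, so by $d$-regularity the internal degree of $v$ in $G[B]$ equals that in $T_d[B]$; summing gives equal degree sums, hence at most $|B|-1$ non-loop edges in $G[B]$ (any loop or half-loop only wastes degree). Since $T_d\setminus B=G\setminus B$ is a forest whose components each attach to $B$ at a single vertex, connectedness of $G$ forces $G[B]$ to be connected; but a connected graph on $|B|$ vertices needs at least $|B|-1$ non-loop edges, and equality forces a simple tree. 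Thus $G[B]$, and hence $G$, is a tree. The same argument applies independently at each of your sparse, well-separated modification sites, so the ``zipper'' cannot create cycles no matter how many edges it coordinates---the obstacle you flagged is a hard Euler-characteristic obstruction, not a design issue. (A secondary gap: your growth bound controls only $\mathbf{E}\,|B_r(G,o)|$, whereas the theorem requires $\liminf_n |S_n|^{1/n}=d-1$ almost surely.)

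The paper's construction is entirely different and sidesteps this obstruction by not working on the vertex set of $T_d$ at all. One takes supercritical site percolation on $\mathbb{Z}^2$, passes to the universal cover $\overline{\mathcal C}$ of the infinite cluster (a random tree of maximal degree $4$), and adds half-loops to make it $4$-regular; unimodularity is inherited under taking universal covers. The growth argument is also quite different from a ball comparison: one shows the cluster almost surely contains a box of side $c\log r$ within cluster-distance $r$, so simple random walk stays in the cluster for $n$ steps with probability decaying subexponentially; this forces the number of length-$n$ non-backtracking walks in the cluster---equivalently the sphere sizes in the universal cover---to grow like $(d-1)^n$ almost surely.
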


We obtain our example by considering the universal cover of the infinite
cluster in supercritical percolation over $\mathbb{Z}^{2}$.

\subsection{The basic method} \label{bm}

There is a common method in the proofs of Theorems \ref{vegesbecsles} and \ref{unimod} which we can summarize as follows.

The central tool of our analysis is a nullcycle. Recall that a cycle is a walk of finite length that starts and ends at the same vertex.

\begin{definition}\label{d:nullcycle} A {\bf nullcycle} is
a cycle in a graph $G$ so that if we keep deleting backtrackings (steps that are immediately reversed), we get a cycle of length 0.
\end{definition}

This property does not depend on the order of erasing backtrackings.
Equivalently, a nullcycle is a cycle whose lift in the universal covering tree of
the graph is also a cycle. In other words, the walk corresponds to a trivial element in the fundamental group of the graph $G$. The number of nullcycles in a $d$-regular graph starting at a fixed vertex $v$ equals the number of cycles in the $d$-regular tree at a fixed vertex.

To bound the spectral radius, we have to count cycles of a given length. In order to bound the spectral radius away from that of the tree, we need to show that there are exponentially more cycles than nullcycles. Consider the set of cycles of length $nk$ starting at a vertex $v$ in a $d$-regular graph $G$. We say that $w'$ is a {\bf rewiring} of $w$ if they are
at the same place at times that are multiples of $k$. This definition is used in Section \ref{s:rho and pi1}; in Section
\ref{c and n} we use a slight variant of this.

Consider the equivalence class $[w]$ of a typical nullcycle $w$ under the rewiring equivalence relation. The essence of our argument is to show that for a typical $w$, the probability that a random element $C$ of $[w]$ is a nullcycle is exponentially small.
Essentially, in every segment $[jk, (j+1)k]$, if there are short cycles around in the graph, there is a positive probability that the rewiring $C$ will use them, and this is likely to stop  $C$ from being null-homotopic.

In order to show that $C$ is nullhomotopic with exponentially small probability, we need to find a linear number of $j$ so that $G$ has short cycles around $w(jk)$. Fortunately,  the random nullcycle $w$ samples the graph $G$ in a homogeneous manner. In particular, if $w(0)$ is  a uniformly chosen vertex, then so will be $w(j)$ for every $j$. This is an advantage of using random nullcycles over random cycles. For infinite graphs, the proof of this step uses unimodularity.

A crucial property that we use to get explicit bounds is one that random nullcycles share with simple random walks. Let $G$ be a $d$-regular rooted graph and let $W$ be a
uniform random nullcycle of length $o(\sqrt{|G|})$, starting at the root.
Then the expected number of visits of $W$ at any vertex of $G$ can be bounded above in terms of $\rho (G)$ (without referring to the length of the cycle). In particular, for a good
expander graph, the expected number of returns of a random nullcycle is
bounded. We need this property to show that a typical rewiring will not use the same short cycles over and over again. This is a technically difficult point that we tackle in Section \ref{s:nullcycles}.

Putting all these together, we get that if there are many short cycles, then a typical nullcycle  will get close to short cycles at linearly many different times. Thus a random rewiring will be a nullcycle only with exponentially small probability. In other words, there are exponentially more cycles than nullcycles, which implies that the spectral radius of $G$ is greater than that of the tree $\T_d$.

%

%

\subsection{Open problems}

It is not clear whether the $\log \log |G|$ is optimal in
Theorem \ref{essgirth}. For all the known examples of graphs that are close
to being Ramanujan, the shortest cycles with positive density are actually logarithmic.

\begin{problem} \label{pp}
Is there a constant $c=c(d)>0$ such that for any $d$-regular Ramanujan graph
sequence $(G_{n})$, the probability that the $c\log \left\vert
G_{n}\right\vert $-neighborhood of a uniform random vertex in $G_{n}$ is a
tree converges to $1$?
\end{problem}

A standard ergodicity argument says that for an ergodic unimodular random
graph $G$, the weak limit of the random walk neighborhood sampling of $G$
gives back the distribution of $G$ a.s. \cite{benscha}. This suggests the
following possible generalization of Theorem \ref{unimod}.

\begin{problem}
\label{kerdes1}Let $G$ be an infinite $d$-regular rooted Ramanujan graph and
let $k>0$. Let $p_{n}$ denote the probability that the random walk of length
$n$ on $G$ ends on a $k$-cycle. Is it true that $p_{n}$ converges to $0$?
\end{problem}

That is, is it true that the random walk neighborhood sampling of $G$
converges to $T_{d}$? The answer does not follow from Theorem \ref{unimod},
even when the random walk sampling converges, as the limit is only a
stationary distribution on rooted graphs and is not necessarily unimodular.
It would also be interesting to see whether Theorem \ref{unimod} holds for
stationary random graphs. The recent paper \cite{grikanag} solves
Problem \ref{kerdes1} affirmatively in the case when the so-called co-growth
of $G$, the exponent of the probability of return for a non-backtracking
random walk, is less than $\,1/\sqrt{d-1}$. However, when the co-growth
equals $1/\sqrt{d-1}$, the graph is still Ramanujan but the answer seems
unclear. We thank Tatiana Smirnova-Nagnibeda for communicating this with us. After the first preprint version of this paper appeared, R. Lyons and Y. Peres, \cite{lyons_peres} generalized our results and in particular gave a positive answer to Problem \ref{kerdes1}.

The linear lower
estimate in the spectral radius in Theorem \ref{vegesbecsles} seems to be
sharp, but we have not been able to settle this with a suitable family of
examples. The same is true for unimodular random graphs (see the first bound of \eqref{e:urg bounds}).

\begin{problem}
Does there exists $C>0$ such that for every $r>0$ there exists an infinite $%
d $-regular unimodular random graph $G$ with
\begin{equation*}
\rho (G)\leq \rho (T_{d})+Cr
\end{equation*}%
such that the density of loops in $G$ is at least $r$?
\end{problem}

One natural idea would be to use a modified universal cover of a finite $d$%
-regular graph of size $n$ with a loop, where we never open the loop in the
cover. It looks reasonable that this cover (which is a finitely supported
random rooted tree with loops) should have spectral radius around $\rho (T_{d})+C/n$%
.

\bigskip

The paper is organized as follows. Section \ref{section_preliminaries}
contains the basic definitions and we prove some lemmas that will be used
throughout the paper. In Sections \ref{s:Cycles in Td} and \ref{s:nullcycles}
we use properties of cycles in trees to study nullcycles, which are
needed for Theorem \ref{vegesbecsles}.
In Section \ref{s:explicit bounds on rho} we prove
Theorem \ref{t:main}, a more general version of Theorems \ref{vegesbecsles} and \ref{unimod}.
We also show
Theorem \ref{t:ess girth r graphs}, a more general version of Theorem \ref{essgirth}.
Finally, in this Section we also prove Theorem \ref{weakram}.

Section \ref{s:rho and pi1} contains a sharp bound
on the spectral radius in terms of random walks on the fundamental group.
Section \ref{section_dense}
contains the proof of Theorem \ref{distance}. This section is independent of
the rest.

In Section \ref{s:fin and infite rg}
 we give example of Ramanujan graphs with loops, and Section \ref{s:growth} we prove Theorem \ref{growth}.

Note that one can read Section \ref{s:explicit bounds on rho}, and \ref{s:rho and pi1}
independently, after reading Section \ref{section_preliminaries}, but reading any of these two will give help when
reading the other.

An earlier version of this paper contained a generalization of Kesten's theorem on
groups. As the readership of this result is expected to be different from
that of the current paper (and the current paper is already long),
we decided to publish it in a separate article, see \cite{kestengroup}.

\section{Preliminaries \label{section_preliminaries}}

In this section we define the notions and state some basic results used in
the paper.\bigskip

We follow Serre's notation for a graph, with a modification on how to define
loops. A graph $G$ consists of two sets, a set of vertices denoted by $V(G)$
and a set of edges denoted $E(G)$. For every edge $e\in E(G)$ there are
vertices $e^{-}$ (the initial vertex) and $e^{+}$ (the terminal vertex). We
allow $e^{-}=e^{+}$: such edge is called a \emph{loop}. For every edge $e$
there is a \emph{reverse edge} $\overline{e}\in E(G)$ such that $\overline{e}%
^{+}=e^{-}$ and $\overline{e}^{-}=e^{+}$. For a loop $e$, we allow $%
\overline{e}=e$; these are called \emph{half-loops}. The degree of a vertex $%
v$ is
\begin{equation*}
\deg v=\left\vert \left\{ e\in E(G)\mid e^{-}=v\right\} \right\vert
\end{equation*}%
So half-loops contribute $1$ to the degree, but loops together with their
distinct inverse contribute $2$. For spectral and random walk questions, each (non-half) loop
can be replaced by two half-loops. So in this paper we will assume that all loops are half-loops.

A graph is $d$-regular, if all vertices
have degree $d$.
\bigskip

%
A \emph{walk} of length $n$ is a sequence of directed edges $%
w=(w_{1},w_{2},\ldots ,w_{n})$ such that $w_{i-1}^{+}=w_{i}^{-}$ ($2\leq
i\leq n$). The walk is a \emph{cycle} if $w_{1}^{-}=w_{n}^{+}$. The \emph{%
vertices} of the walk are defined by $w(i-1)=w_{i}^{-}$ and $w(n)=w_{n}^{+}$
is the \emph{end of the walk}. The \emph{inverse} of a walk $w$ is defined
by $w^{-1}=(\overline{w_{n}},\overline{w_{n-1}},\ldots ,\overline{w_{1}})$.
A cycle is a \emph{nullcycle} if its lift to the universal cover of $G$
stays a cycle. That is the same as saying that if we keep erasing backtracks
from the cycle, we get to the empty walk.
For a rooted graph $(G,o)$  we will denote the set of nullcycles of length $n$ by $\mathcal{N}_n$.

For a graph $G$ and $x,y\in V(G)$ let $W_n(x,y)$ denote the set of walks of
length $n$ starting at $x$ and ending at $y$. A \emph{random walk of length }$n$\emph{\ starting
at }$x$ is a uniform random walk starting at $x$. Let $p_n(x,y)$ denote the probability that
a random walk of length $n$ started at $x$ ends at $y$.  We call $p_n(x,x)$ the $n$-step
{\bf return probability}.

Let $G$ be a $d$-regular, connected undirected graph. Let $\ell
^{2}(G)$ be the Hilbert space of all square summable functions on the vertex
set of $G$. Let us define the Markov operator $M:\ell ^{2}\rightarrow \ell
^{2}$ as follows:
\begin{equation*}
(Mf)(x)=\frac{1}{d}\dsum\limits_{e\in E(G), e^{-}=x } f(e^{+})
\end{equation*}

When $G$ is infinite, we define the {\bf spectral radius} of $G$, denoted
$\rho (G)$, to be the norm of $M$. When $G$ is finite, we want to exclude
the trivial eigenvalues and thus define $\rho (G)$ to be the second largest
element in the set of absolute values of eigenvalues of $M$. Note that when the connected graph $G$ is bipartitie, then
$-d$ is an eigenvalue with multiplicity one; this is not counted in the definition of $\rho(G)$.

In the case when $G$ is infinite and connected, one can express the spectral
radius of $G$ from the return probabilities as follows:
\begin{equation*}
\rho (G)=\lim_{n\rightarrow \infty }p_{2n}(x,x)^{1/2n}
\end{equation*}%
where $x$ is an arbitrary vertex of $G$.

The Markov operator $M$ is self-adjoint, so we can consider its spectral
measure. This is a projection valued measure $P$ such that $%
P(O):l^{2}(G)\rightarrow l^{2}(G)$ is a projection for every Borel set $%
O\subset \lbrack -1,1]$. For every $f\in l^{2}(G)$ with $\left\Vert
f\right\Vert _{2}=1$, the expression
\begin{equation*}
\mu _{f}(O)=\langle P(O)f,f\rangle
\end{equation*}%
defines a Borel probability measure on $[-1,1]$.

For graph $G$ rooted at $o$, let the \emph{spectral measure of }$G$ be
\begin{equation*}
\mu _{G,o}=\mu _{\delta _{o}}
\end{equation*}%
where $\delta _{o}\in l^{2}(G)$ is the indicator function of $o$. The
best way to visualize this measure is to look at its moments, that satisfy
the following equality:
\begin{equation*}
\dint\limits_{\lbrack -1,1]}x^{k}d\mu _{G,o}=p_k(o,o)
\end{equation*}%
for all integers $k\geq 0$. \bigskip

\subsection*{Unimodular random graphs} Heuristically, a unimodular
random graph is a probability distribution on rooted graphs that stays
invariant under moving the root to any direction. However, one has to be
careful with this intuition, as direction is not well-defined and indeed,
there exist vertex transitive graphs that we want to exclude from the
definition. We follow \cite[Section 5.2]{aldoussteele} in our definition
restricted to the $d$-regular case where it is somewhat simpler.

A \emph{flagged graph} is a graph with a distinguished root and a directed
edge starting at the root. One can \emph{invert the flag} by moving the root
to the other end of the flag and switching the direction of the flag.

\begin{definition}
Let $G$ be a probability distribution on rooted $d$-regular graphs. Pick a
uniform random edge from the root and put a flag on it. This gives a
probability distribution $\widetilde{G}$ on flagged $d$-regular graphs. We
say that $G$ is a \emph{unimodular random graph}, if the distribution $%
\widetilde{G}$ stays invariant under inverting the flag.
\end{definition}

That is, if some of the flagged lifts of a given rooted graph are
isomorphic, we count it with multiplicity. Note that vertex transitivity in
itself does not imply unimodularity. A simple example is the so-called
grandmother graph. This can be obtained by taking a $3$-regular tree and directing it towards a boundary point, then connecting every vertex to the ascendant of its ascendant (its grandmother) and then erasing directions (see Figure \ref{fig:grandmother}).

If one does not mind working with edge directed graphs, it is easier to see the lack of unimodularity in the oriented 3-regular tree itself. There is only one type of rooted graph here that obviously appears with probability $1$. The corresponding measure on flagged graphs puts the flag on an outgoing edge with probability $1/3$, but after an inversion we see an outgoing edge with probability $2/3$.
See \cite{aldouslyons} for more about unimodularity.

\begin{figure}[ht]
\begin{center}
\includegraphics[width=5cm]{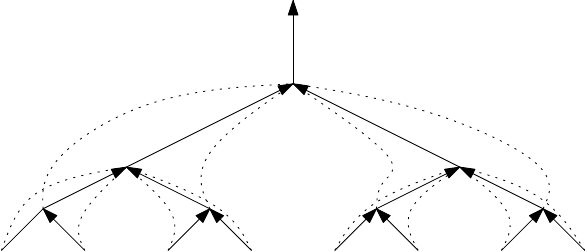}
\end{center}
\caption{The grandmother graph}
\label{fig:grandmother}
\end{figure}

\subsection*{Mass Transport Principle} The most useful property about
unimodular random graphs (that can also be used to define them) is the Mass
Transport Principle which is as follows. Let $f$ be a non-negative
real-valued function on triples $(G,x,y)$ where $G$ is a $d$-regular rooted
graph and $x,y\in G$ such that $f$ does not depend on the location of the
root. Then the expectations
\begin{equation*}
\mathbf{E}\left[ \dsum\limits_{y\in G}f(G,o,y)\right] =\mathbf{E}\left[
\dsum\limits_{x\in G}f(G,x,o)\right]
\end{equation*}%
where $o$ is the root of $G$. The picture is that if one sets up a paying
scheme on the random graph $G$ that is invariant under moving the root, then
the expected payout of the root equals its expected income.

\subsection*{Benjamini-Schramm convergence} A $d$-regular \emph{graph
sequence} $(G_{n})$ is defined as a sequence of finite $d$-regular graphs
with size tending to infinity. By a \emph{pattern of radius }$r$ we mean a
rooted graph where every vertex has distance at most $r$ from the root. For
a finite graph $G$ and a pattern $\alpha $ of radius $r$ let the sampling
probability $p(G,\alpha )$ be the probability that the $r$-ball around a
uniform random vertex of $G$ is isomorphic to $\alpha $. We say that a graph
sequence $(G_{n})$ is Benjamini-Schramm convergent, if $p(G_{n},\alpha )$ is
convergent for every pattern $\alpha $. It is easy to see that every graph
sequence has a convergent subsequence.

What is a natural limit object of a convergent graph sequence? One can also
take pattern densities of a unimodular random graph $G$; there $p(G,\alpha )$
denotes the probability that the $r$-ball around the root of $G$ is
isomorphic to $\alpha $. We say that a graph sequence $(G_{n})$ converges to
$G$ if
\begin{equation*}
\lim_{n\rightarrow \infty }p(G_{n},\alpha )=p(G,\alpha )\text{ for all
patterns }\alpha \text{.}
\end{equation*}%
Every Benjamini-Schramm convergent graph sequence has a unique limit
unimodular random graph (see \cite[Section 2.4]{aldoussteele}).

For a finite $d$-regular graph $G$ let $\mu _{G}$ denote the eigenvalue
distribution of the Markov operator on $G$. Note that for a uniform random vertex $o$ we have $\mu_G=\ev \mu_{G,o}$. For an infinite unimodular random graph $G$ we can also define $\mu_G=\ev \mu_{G,o}$.

\begin{proposition}
\label{ekvivalens} Let $(G_{n})$ be a sequence of finite $d$-regular graphs.
Then the following are equivalent: \newline
1) $(G_{n})$ has essentially large girth; \newline
2) $(G_{n})$ converges to $T_{d}$ in Benjamini-Schramm convergence; \newline
3) $\mu _{G_{n}}$ weakly converges to $\mu _{T_{d}}$.
\end{proposition}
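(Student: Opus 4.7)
The plan is to prove $(1) \Leftrightarrow (2)$ by a direct local counting argument, and then to close the loop with $(2) \Rightarrow (3) \Rightarrow (1)$ via the moment interpretation of $\mu_{G_n}$.

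For $(1) \Leftrightarrow (2)$, I use that $T_d$ is vertex transitive and itself a tree, so the only radius-$r$ pattern that occurs under its Benjamini--Schramm distribution is the tree-ball $B_r^{\mathrm{tree}}$. Thus $G_n \to T_d$ is equivalent to $p(G_n, B_r^{\mathrm{tree}}) \to 1$ for every $r$, i.e.\ the density of vertices $v$ with $B_r(v) \neq B_r^{\mathrm{tree}}$ tends to $0$. A vertex has a non-tree $r$-ball iff some cycle of length at most $2r+1$ lies inside $B_r(v)$. Sandwiching gives, for $L \le 2r+1$,
\[
c_L(G_n) \;\le\; \#\{v : B_r(v) \neq B_r^{\mathrm{tree}}\} \;\le\; d(d-1)^{r-1}\sum_{L \le 2r+1} L\, c_L(G_n),
\]
so the density of non-tree balls vanishes for all $r$ exactly when the density of short cycles vanishes for all $L$.

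For $(2) \Rightarrow (3)$, the $k$-th moment is
\[
\int x^k \, d\mu_{G_n} \;=\; \frac{1}{|G_n|}\sum_{v} p_{v,k} \;=\; \ev\, p_{o,k}(G_n),
\]
where $o$ is a uniform random vertex. Since $p_{v,k}$ is a bounded functional of the $\lceil k/2 \rceil$-ball around $v$, Benjamini--Schramm convergence of $(G_n)$ to $T_d$ implies $\int x^k \, d\mu_{G_n} \to p_{o,k}(T_d) = \int x^k \, d\mu_{T_d}$ for every $k$. As $\mu_{T_d}$ is compactly supported in $[-1,1]$, moment convergence upgrades to weak convergence.

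For $(3) \Rightarrow (1)$, I lift closed walks to the universal cover $T_d$. At each vertex $v \in G_n$, the nullhomotopic closed walks of length $k$ at $v$ correspond bijectively to closed walks of length $k$ at a fixed root of $T_d$, contributing $W_k^{\mathrm{tree}}$ regardless of $v$. Therefore
\[
d^k\!\left(\int x^k \, d\mu_{G_n} - \int x^k \, d\mu_{T_d}\right) \;=\; \frac{1}{|G_n|}\,\#\{\text{non-nullhomotopic closed walks of length }k \text{ in }G_n\} \;\ge\; 0.
\]
Since both measures live in $[-1,1]$, weak convergence entails moment convergence, so this excess tends to $0$ for every $k$. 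Taking $k = L$ and noting that each cycle of length $L$ contributes at least $2L$ non-nullhomotopic closed walks of length $L$ (one per starting vertex and direction), we obtain $c_L(G_n)/|G_n| \to 0$ for every $L$, which is essentially large girth. The main delicate step is this last one: one has to set up the identity $W_k(G_n) = W_k^{\mathrm{tree}}|G_n| + \#\{\text{non-null walks}\}$ cleanly via the universal-cover lift, and then verify that $L$-cycles sit as an unambiguous positive contribution to the excess at $k = L$. After that, the three equivalences follow uniformly in $k$ and $r$.
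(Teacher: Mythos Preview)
Your argument is correct and matches the paper's approach: the paper calls $(1)\Leftrightarrow(2)$ immediate from the definition, proves $(2)\Rightarrow(3)$ by the same moment computation you give, and establishes the remaining implication as the contrapositive $\neg(1)\Rightarrow\neg(3)$ via exactly your excess-return-probability observation (your universal-cover identity is just the explicit form of their bound $p_{u,k}(G_n)\ge p_{o,k}(T_d)+d^{-k}$ at vertices lying on a $k$-cycle). One small slip: in your sandwich for $(1)\Leftrightarrow(2)$, the left inequality $c_L\le\#\{v:B_r(v)\neq B_r^{\mathrm{tree}}\}$ needs a constant factor $C(d,L)$ on the right, since many $L$-cycles can pass through few vertices; this does not affect the conclusion.
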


\begin{proof} The equivalence of 1) and 2) is immediate from the
definition of Benjamini-Schramm convergence.

Assume that $(G_{n})$ converges to the unimodular random graph $G$. We claim
that $\mu _{G_{n}}$ weakly converges to the expected spectral measure $\mu_G =%
\mathbf{E} \mu _{G,o}$. To check this, we can look at the $k$th moment
\begin{equation*}
\dint x^{k}d\mu_G =\mathbf{E}\left[ p_k^G(o,o)\right] \text{.}
\end{equation*}%
Recall that $p^G_k(o,o)$ denotes the probability of return of the random
walk on $G$ starting at $o$. But for any graph $G$ and vertex $v$ of $G$,
the return probability $p^G_k(v,v)$ only depends on the $k/2$-ball around $o$%
. Since there are only finitely many patterns of a given radius, this
implies
\begin{equation*}
\mathbf{E}\left[ p^G_k(o,o)\right] =\dsum\limits_{\alpha \text{ is a pattern
of radius }\lfloor k/2\rfloor }p(G,\alpha )p^\alpha_k(v,v)
\end{equation*}%
where $v$ is the root of $\alpha $. Now $(G_{n})$ converges to $G$, so
\begin{eqnarray*}
\mathbf{E}\left[ p_k^G(o,o)\right] &=&\lim_{n\rightarrow \infty
}\dsum\limits_{\alpha \text{ is a pattern of radius }\lfloor k/2 \rfloor}p(G_{n},\alpha
)p^\alpha_k(v,v)= \\
&=&\lim_{n\rightarrow \infty
}\mathbf{E}\left[ p^{G_{n}}_k(u,u)\right] =\lim_{n\rightarrow \infty
}\dint x^{k}d\mu _{G_{n}}
\end{eqnarray*}%
where $u$ is a uniform random vertex in $G_{n}$. So, $\mu _{G_{n}}$ weakly
converges to $\mu_G $ as claimed. Hence 2) implies 3)\ follows immediately.

Assume that 1) does not hold, that is, $(G_{n})$ is a graph sequence that
does not have essentially large girth. Then there exists $k,\varepsilon >0$
such that the density of $k$-cycles in $G_{n}$ is at least $\varepsilon $
for infinitely many of the $G_{n}$. This implies that for these $n$,
\begin{equation*}
\dint x^{k}d\mu _{G_{n}}=\mathbf{E}\left[ p^{G_{n}}_k(u,u)\right] \geq
p^{T_{d}}_k(o,o)+\frac{\varepsilon }{d^{k}}=\dint x^{k}d\mu _{T_{d}}+\frac{%
\varepsilon }{d^{k}}
\end{equation*}%
which implies that $\mu _{G_{n}}$ does not converge weakly to $\mu _{T_{d}}$%
. Hence, 3) does not hold.
We proved the required equivalences. \end{proof}

\subsection*{Fundamental group} Let $G$ be a graph rooted at $o$. We
call two cycle starting at $o$ \emph{homotopic}, if one can get one
from the other by inserting and erasing backtracks, that is, walks of type $s%
\overline{s}$ where $s$ is an edge of $G$. Then the set of equivalence
classes forms a group under concatenation, called the \emph{fundamental group%
} $\pi _{1}(G)$. It is well known that the fundamental group of a graph
without half-loops is a free group \cite[Theorem 5.1]{massey}. Every
half-loop adds a cyclic group of order $2$ as a free product. The most
important general property of fundamental groups we shall use in this paper
is that if $H$ is a subgraph of $G$, then the induced homomorphism from $\pi
_{1}(H)$ to $\pi _{1}(G)$ is injective.

\section{Cycles in $\T_d$}
\label{s:Cycles in Td}

This section establishes some basic properties of $\mathcal N_n=\mathcal N_n(d)$, the set of $n$-cycles in $\T_d$.
Such a cycle $\alpha \in \mathcal N_n$ in the $3$-regular tree is depicted in Figure \ref{fig:cycle}. Given any covering map $p:T_3 \arrow X$ to a $3$-regular graph $X$, the
projection of the cycle $p(\alpha)$ is referred to as a null cycle in the graph $X$.  \begin{figure}[ht]
\begin{center}
\includegraphics[width=5cm]{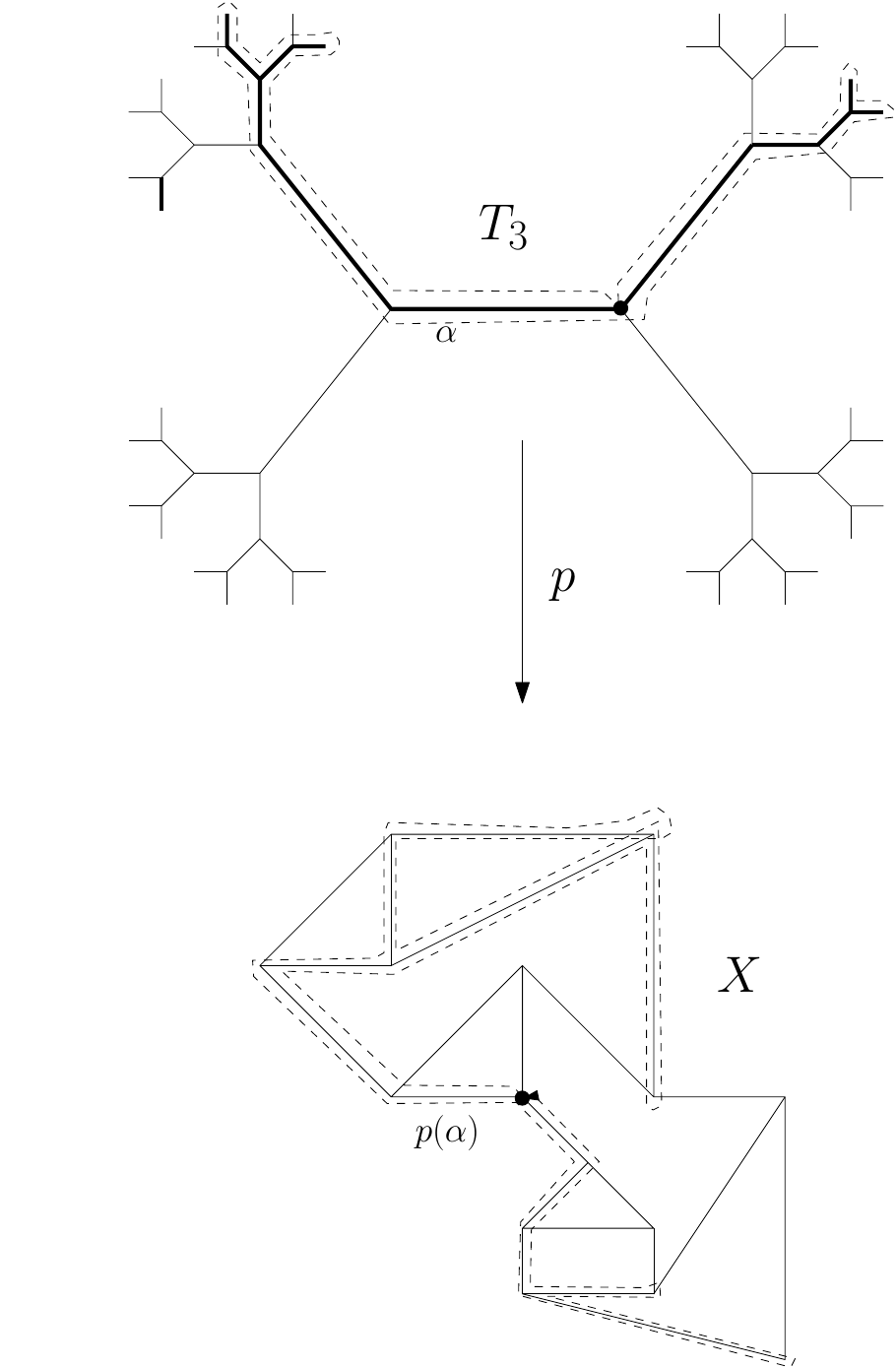}
\end{center}
\caption{A cycle in the $3$-regular tree}
\label{fig:cycle}
\end{figure}

\subsection{Explicit return probability bounds}
We start by estimating the size of $\mathcal{N}_n$.
\begin{lemma}[Return probabilities of SRW on $\T_d$]\label{l:returns}
Let $\rho=\rho(\T_d)=2\sqrt{d-1}/d$.
The $n$-step return probability $r_n=d^{-n}|\mathcal N_n(d)|$ for simple random walk in $\T_d$ for even $n > 0$ satisfies
$$
\frac23 \frac{\rho^{n} }{n^{3/2} }< r_n  < 10 \frac{\rho^{n} }{n^{3/2} }.
$$
\end{lemma}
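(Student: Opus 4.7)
My plan is to work through the Kesten--McKay spectral representation of $r_n$. Since $r_n$ is the $n$th moment of $\mu_{\T_d}$, whose density is recalled in the introduction, for even $n$ we have
\[
r_n \;=\; \int_{-\rho}^{\rho}t^n\,d\mu_{\T_d}(t) \;=\; \frac{d}{\pi}\int_{0}^{\rho}\frac{t^n\sqrt{\rho^2-t^2}}{1-t^2}\,dt.
\]
The substitution $t=\rho\cos\theta$ converts this to
\[
r_n \;=\; \frac{d\rho^{n+2}}{\pi}\int_{0}^{\pi/2}\frac{\cos^n\theta\,\sin^2\theta}{1-\rho^2\cos^2\theta}\,d\theta,
\]
an integral concentrated near $\theta=0$ on scale $1/\sqrt n$. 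The proof then reduces to a Laplace-type estimate, one-sided in each direction.

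For the upper bound I would use the three elementary inequalities $\sin^2\theta\le\theta^2$, $\cos\theta\le e^{-\theta^2/2}$ on $[0,\pi/2]$ (equivalent to $\tan\theta\ge\theta$), and $1-\rho^2\cos^2\theta\ge 1-\rho^2=(d-2)^2/d^2$. Extending the integral to $[0,\infty)$ and evaluating $\int_0^\infty\theta^2 e^{-n\theta^2/2}\,d\theta=\sqrt{\pi/2}\,n^{-3/2}$ gives
\[
r_n \;\le\; \frac{2\sqrt 2\,d(d-1)}{\sqrt\pi\,(d-2)^2}\cdot\frac{\rho^n}{n^{3/2}}.
\]
A derivative computation (its sign is that of $-3d+2$) shows the $d$-dependent prefactor is decreasing on $d\ge 3$, with maximum $12\sqrt 2/\sqrt\pi\approx 9.575 < 10$ at $d=3$.

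For the lower bound I would restrict to $\theta\in[0,1/\sqrt n]$ and reverse the key inequalities: $\cos\theta\ge e^{-\theta^2}$ on $[0,1]$ (via a Taylor estimate for $-\log\cos\theta$), $\sin^2\theta\ge c\theta^2$ for an explicit $c$, and the trivial $1-\rho^2\cos^2\theta\le 1$. After scaling $u=\sqrt n\,\theta$, the $n$-dependence pulls out as $n^{-3/2}$ times a positive scale-invariant integral $\int_0^1 cu^2 e^{-u^2}\,du$. Combining with the uniform bound $d\rho^2=4(d-1)/d\ge 8/3$ on $d\ge 3$ and, if needed, enlarging the truncation window to $[0,a/\sqrt n]$ with $a>1$ secures the constant $2/3$.

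The main obstacle is the tightness of the stated constants: the upper bound $10$ is very close to the true asymptotic $\approx 9.575$, so the Laplace method must be executed carefully with essentially no constant slack, and both bounds must be controlled uniformly in $d\ge 3$ via the monotonicity computations above. The few small even values of $n$ for which the chosen truncation window falls outside the valid range of the Taylor bounds would be handled directly from the explicit formula for $|\mathcal N_n(d)|$ obtained via the return generating function $R(z)=(d\sqrt{1-4(d-1)z}-(d-2))/(2(1-d^2z))$, which arises from the standard Catalan/excursion decomposition of nullcycles on $\T_d$.
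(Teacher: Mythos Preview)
Your upper bound is correct and, after unwinding, is essentially the paper's argument: both bound $1/(1-t^2)$ by $1/(1-\rho^2)$ and then control the remaining integral. The paper uses $s=t^2$ to write it as the Beta integral $\int_0^{\rho^2}s^{(n-1)/2}\sqrt{\rho^2-s}\,ds=\tfrac{\sqrt\pi}{2}\rho^{n+2}\,\Gamma(n/2+1/2)/\Gamma(n/2+2)$ exactly and then bounds the Gamma ratio by $2^{3/2}n^{-3/2}$; your Gaussian integral $\int_0^\infty\theta^2 e^{-n\theta^2/2}\,d\theta=\sqrt{\pi/2}\,n^{-3/2}$ is the Stirling limit of that, and both routes land on the identical constant $12\sqrt 2/\sqrt\pi$.

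The lower bound, however, does not reach $2/3$ with the inequalities you state, and enlarging the truncation window cannot fix it. The bound $\cos\theta\ge e^{-\theta^2}$ is off by a factor of $2$ in the exponent (recall $-\log\cos\theta=\theta^2/2+O(\theta^4)$, so any Gaussian lower bound must have exponent strictly below $-\theta^2/2$). This loss is structural: after scaling $u=\sqrt n\,\theta$, sending $a\to\infty$, $n\to\infty$, and taking the sine constant $c\to 1$, the best you can extract is $\int_0^\infty u^2e^{-u^2}\,du=\sqrt\pi/4$, giving a prefactor $\dfrac{d\rho^2}{4\sqrt\pi}$, which equals $\dfrac{2}{3\sqrt\pi}\approx 0.376$ at $d=3$ and never exceeds $1/\sqrt\pi\approx 0.564$. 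So the assertion that enlarging $a$ ``secures the constant $2/3$'' is false. The paper avoids this by \emph{not} Gaussian-approximating: it keeps the exact Beta integral, observes that $n^{3/2}\Gamma(n/2+1/2)/\Gamma(n/2+2)$ is increasing for $n\ge 4$ and equals $\sqrt\pi$ at $n=4$ (handling $n=2$ by a direct check of $r_2=1/d$); this chain gives exactly $\tfrac{8/3}{2\pi}\cdot\tfrac{\sqrt\pi}{2}\cdot\sqrt\pi=\tfrac23$. To salvage your approach for the lower bound you should evaluate $\int_0^{\pi/2}\cos^n\theta\,\sin^2\theta\,d\theta$ exactly as a Beta function rather than bounding $\cos^n\theta$ below by a Gaussian.
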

\begin{proof}
Return probabilities are moments of the spectral measure. The spectral measure in $\T_d$ is supported on $[-\rho,\rho]$ with density given by
$$
\frac{d}{2\pi} \frac{\sqrt{\rho^2-t^2}}{1-t^2},
$$
see \cite{woess2000random}, formula (19.27). So for even $n$, by symmetry, we may write
$$
r_n = \frac{d}{\pi} \int_{0}^\rho t^n\frac{\sqrt{\rho^2-t^2}}{1-t^2}\,dt=\frac{d}{2\pi} \int_{0}^{\rho^2} s^{(n-1)/2}\frac{\sqrt{\rho^2-s}}{1-s}\,ds.
$$
Then, with
$$
a=\rho^{-2}\int_0^{\rho^2} s^{(n-1)/2} \sqrt    {\rho^2-s}\, ds = \frac{\sqrt{\pi}}{2} \rho^{n} \frac {\Gamma(n/2+1/2)}{\Gamma(n/2+2)}
$$
we have
$$
\frac{d\rho^2}{2\pi}\;a<r_n <
\frac{d\rho^2}{2\pi(1-\rho^2)}\;a.
$$
A small computation shows that for $d\ge 3$ we have
$$
 \frac{8}3 \le 4-\frac 4 d= d\rho^2, \qquad \frac{d\rho^2}{1-\rho^2}= \frac{4d^2-4d}{(d-2)^2}\le 24.
$$
Now for $n\ge 4$ we have
$$
\kappa n^{-3/2}\le \frac{\Gamma(n/2+1/2)}{\Gamma(n/2+2)} \le 2^{3/2}n^{-3/2}, \qquad \kappa=4^{3/2}\frac{\Gamma(2.5)}{\Gamma(4)}.
$$
The upper bound also holds for $n=2$. (We manually check that the lower bound of the lemma holds for $r_2=1/d$.)
To complete the proof, we bound the lower and upper constants factors
\[
\frac{8}{3}\frac{1}{2\pi}\frac{\sqrt{\pi}}{2}\kappa =\frac23, \qquad 9.57\sim 24\frac{1}{2\pi}\frac{\sqrt{\pi}}{2}2^{3/2}=12\sqrt{2/\pi}< 10.
\qedhere \]
\end{proof}

Our next goal is to study the expected number of visits for random cycles in $\T_d$. This will be based on the same
question for random walk excursions on $\mathbb Z$.
Recall that an {\bf excursion} of length
$n$ on $\mathbb Z$ is a walk that stays positive except for
time $0$ and $n$, when it is zero.

\subsection{Visits of cycles}

\begin{lemma}[Counting excursions]\label{l:excursion} Let $w_{n,k}$ be the number of
walks of length $n\ge 1$ from $0$ to $k\ge 0$ in $\mathbb Z$. Then
$$
w_{n,k}< \sqrt{2/\pi} \frac{2^{n}}{\sqrt n}\;e^{-k^2/(2n)}.
$$
Let $w_{n,k}^+$ be the number of such paths that stay positive after time 0. Then for $k>0$ we have
$$
w_{n,k}^+< \sqrt{2/\pi}\frac{2^{n}k}{n^{3/2}}\;e^{-k^2/(2n)}.
$$
\end{lemma}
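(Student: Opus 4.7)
Both bounds start from the closed form $w_{n,k} = \binom{n}{(n+k)/2}$ (zero when $n+k$ is odd): a walk of length $n$ from $0$ to $k$ is specified by choosing which of its $n$ steps are the $(n+k)/2$ up-steps.

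For the first inequality, the plan is to apply Stirling's formula with explicit remainder. Setting $x = k/n$ and $a = (n+k)/2$, a direct rearrangement gives
\[
\binom{n}{a} \;=\; \sqrt{\tfrac{2}{\pi n (1-x^2)}} \cdot 2^n \cdot e^{-n h(x)/2} \cdot e^{\epsilon_n},
\]
where $h(x) := (1+x)\log(1+x) + (1-x)\log(1-x)$ and $\epsilon_n = r(n) - r(a) - r(n-a)$ with $r(m) \in \bigl(\tfrac{1}{12m+1}, \tfrac{1}{12m}\bigr)$ by Robbins' bounds. The Gaussian decay comes from the convexity estimate $h(x) \ge x^2$, which follows from $h(0)=0$ and $h'(x) = 2\tanh^{-1}(x) \ge 2x$ (the latter from the Taylor series $\tanh^{-1}(x) = x + x^3/3 + \cdots$). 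This yields $e^{-n h(x)/2} \le e^{-k^2/(2n)}$.

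What remains is to absorb the residual factor $(1-x^2)^{-1/2} e^{\epsilon_n}$ so that the prefactor $\sqrt{2/(\pi n)}$ survives unchanged. Since this factor is not pointwise $\le 1$, the bound is tight and I would use two improvements simultaneously: the sharper inequality $h(x) - x^2 \ge x^4/6$ (from the next Taylor term of $2\tanh^{-1} - 2x$), and the fact that $\epsilon_n < 0$ because $\tfrac{1}{a} + \tfrac{1}{n-a} = \tfrac{4n}{n^2-k^2} > \tfrac{4}{n}$ whenever $|k| < n$. These combined gains must offset the $(1-x^2)^{-1/2}$ blowup near $|x|=1$ together with the $e^{1/(12n)}$ Stirling slack; this balancing act is the main technical obstacle, and a handful of small-$n$ cases may have to be checked by hand.

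For the second bound, the plan is to invoke the cycle lemma (Dvoretzky--Motzkin / Bertrand ballot problem): among the $w_{n,k}$ walks from $0$ to $k > 0$, exactly the fraction $k/n$ have $w(t) \ge 1$ at every time $t \ge 1$. Hence $w_{n,k}^+ = (k/n)\, w_{n,k}$, and multiplying the first bound by $k/n$ produces precisely the stated estimate. Once the first bound is in hand this step is immediate and classical.
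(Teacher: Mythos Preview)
Your treatment of $w_{n,k}^{+}$ via the cycle lemma / Ballot theorem is exactly what the paper does: it writes $w_{n,k}^{+}=(k/n)\,w_{n,k}$ and multiplies the first bound by $k/n$.

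For the first bound your approach is viable but differs from the paper's, and the paper's route completely avoids the ``balancing act'' you flag. Instead of applying Stirling with Robbins remainders to $\binom{n}{(n+k)/2}$ directly, the paper factors
\[
\binom{n}{(n+k)/2}\;=\;\binom{n}{\lfloor n/2\rfloor}\cdot\frac{\binom{n}{(n+k)/2}}{\binom{n}{\lfloor n/2\rfloor}}
\]
and bounds the two pieces separately. The central binomial is handled by the one-line classical fact $\binom{n}{\lfloor n/2\rfloor}<\sqrt{2/\pi}\,2^{n}/\sqrt{n}$ (the ratio of the two sides is monotone along even, respectively odd, $n$ and tends to $1$). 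The ratio is written, for $n$ even, as the telescoping product
\[
\frac{((n-k)/2+1)\cdots(n/2)}{(n/2+1)\cdots((n+k)/2)}
\]
and bounded above by $e^{-k^{2}/(2n)}$ through an elementary termwise estimate; the odd case is analogous. The advantage is structural: the constant $\sqrt{2/(\pi n)}$ comes cleanly from the central term and the Gaussian decay cleanly from the ratio, so there is no leftover factor $(1-x^{2})^{-1/2}e^{\epsilon_{n}}$ to absorb and no case analysis in $n$. Your Stirling route can be pushed through with the refinements you list (the extra $x^{4}/6$ in $h$, the sign of $\epsilon_{n}$, plus finitely many small-$n$ checks), but it is longer, and the absorption step is genuinely delicate for small $x$ where $e^{-nx^{4}/12}$ alone does not beat $(1-x^{2})^{-1/2}$ and one must also use the quantitative negativity of $\epsilon_{n}$; the paper's decomposition simply never meets this obstacle.
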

\begin{proof}
We may assume that $n$ and $k$ are the same parity.
Then
$$w_{n,k}=\binom{n}{\frac{n+k}2}.
$$
We use the inequality
$$
\binom{n}{\lfloor n/2\rfloor} <  \sqrt{2/\pi} \frac{2^n }{\sqrt{ n}},
$$
which holds since the ratio of the two sides is increasing along even (respectively odd) $n$
and converges to 1. For $n$ even we now write
$$
\binom{n}{\frac{n+k}2}
\binom{n}{n/2}^{-1} = \frac{((n-k)/2+1)\cdots (n/2)}{(n/2+1)\cdots ((n+k)/2)}\le \left(\frac{n-k}{n} \right)^{k/2}\le e^{-k^2/(2n)},
$$
and the odd case follows similarly.

By the Ballot theorem (see Section 2.7.1 in
\cite{LevinPeresWilmer}) we have
\[
w^+_{n,k}=\frac{k}{n}w_{n,k}
\le \sqrt{2/\pi} 2^n\frac{k}{n^{3/2}}\;e^{-k^2/(2n)}. \qedhere
\]
\end{proof}

Recall that a {\bf simple random walk excursion} of length
$n$ on $\mathbb Z$ is a uniform choice from all excursions of length $n$.
In other words, it is the simple random walk conditioned to stay positive except for
time $0$ and $n$, when it is zero.  Now we are ready to bound the expected number
of visits for simple random walk excursions on $\mathbb Z$.

\begin{lemma}[Visits of SRW excursions on $\mathbb Z$]\label{l:e visits}  The expected number of visits $v_{k,n}$ to level $k>0$ for the
simple random walk excursion of length $n$ on $\mathbb Z$
satisfies $v_{k,n}\le 64k$.
\end{lemma}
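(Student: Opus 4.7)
The plan is to decompose each excursion at the time it visits level $k$, identify the two halves as paths counted by $w^+$, and then bound the resulting double sum using Lemma \ref{l:excursion}.

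First, I would write
$$v_{k,n}\;=\;\frac{1}{E_n}\sum_{j=1}^{n-1} w^+_{j,k}\,w^+_{n-j,k},$$
where $E_n$ is the number of length-$n$ excursions. The summand counts excursions passing through $k$ at time $j$: the first half, a path from $0$ to $k$ of length $j$ staying positive at times $1,\dots,j$, is counted by $w^+_{j,k}$; the second half, a path from $k$ to $0$ of length $n-j$ staying positive at times $0,\dots,n-j-1$, equals (after time reversal) a path from $0$ to $k$ of length $n-j$ staying positive after time $0$, counted by $w^+_{n-j,k}$.

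I next need a lower bound on $E_n$. By removing an excursion's last step one sees $E_n=w^+_{n-1,1}$, and the ballot identity $w^+_{n-1,1}=\frac{1}{n-1}\binom{n-1}{n/2}$ together with the elementary bound $\binom{n-1}{n/2}\ge 2^{n-1}/\sqrt{n}$ gives $E_n\ge c\cdot 2^n/n^{3/2}$ with an explicit constant $c$.

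The main step is then to use Lemma \ref{l:excursion} to obtain
$$\sum_j w^+_{j,k}\,w^+_{n-j,k}\;\le\;\tfrac{2}{\pi}\,k^2\,2^n\sum_{j=k}^{n-k}\frac{\exp\bigl(-\tfrac{k^2}{2j}-\tfrac{k^2}{2(n-j)}\bigr)}{j^{3/2}(n-j)^{3/2}}.$$
By symmetry I split at $j=n/2$, and on the left half use $(n-j)^{-3/2}\le(n/2)^{-3/2}$ to pull out a factor $(2/n)^{3/2}$. The remaining piece is $\sum_{j\ge k} e^{-k^2/(2j)}j^{-3/2}$, which I would bound by the integral $\int_k^\infty e^{-k^2/(2j)}j^{-3/2}\,dj$. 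Substituting $s=k^2/(2j)$ turns it into $(\sqrt{2}/k)\int_0^{k/2}s^{-1/2}e^{-s}\,ds\le \sqrt{2\pi}/k$. Hence the inner sum is $O\bigl(1/(k\,n^{3/2})\bigr)$, the numerator is $O\bigl(k\cdot 2^n/n^{3/2}\bigr)$, and dividing by $E_n$ yields $v_{k,n}=O(k)$.

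The main obstacle is tracking the numerical constant down to $64$. This requires care with (i) the parity restriction on $j$, which is absorbed since all terms are non-negative, (ii) the discretization error in replacing the sum by an integral, and (iii) the cumulative multiplicative constants from Lemma \ref{l:excursion} (two factors of $\sqrt{2/\pi}$) and from the lower bound on $E_n$. It is worth noting that the Gaussian factor in Lemma \ref{l:excursion} is essential for the $O(k)$ conclusion: dropping it and using only $\sum_{j\ge k}j^{-3/2}\le 2/\sqrt{k}$ would give just $v_{k,n}=O(k^{3/2})$.
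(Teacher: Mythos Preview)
Your proposal is correct and follows essentially the same route as the paper: the decomposition $v_{k,n}=E_n^{-1}\sum_j w^+_{j,k}w^+_{n-j,k}$, the Catalan lower bound on $E_n$, the symmetric split at $n/2$ with $(n-j)^{-3/2}\ge (n/2)^{-3/2}$, and the Gaussian-weighted integral $\int e^{-k^2/(2x)}x^{-3/2}\,dx=O(1/k)$ are exactly what the paper does. The only point the paper handles more explicitly is the sum-to-integral comparison, where it observes that $a_{m+\delta}\ge 2^{-3/2}a_m$ for $\delta\in[0,1]$, yielding $\sum a_m\le 2^{3/2}\int a_x\,dx$; this is the step you flag as obstacle (ii), and it is what makes the constant $64$ come out.
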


\begin{proof}
Let $w^+_{n,k}$
denote the number of walks of length $n$ starting at $0$ and ending at $k\ge 0$ that stay positive except perhaps at time $0$ and $n$.
If $X_m$ is a random walk excursion of length $n$, then
$$
v_{k,n}=\ev \sum_{m=1}^{n-1} \one(X_m=k) = \sum_{m=1}^{n-1}
\pr (X_m=k) = \frac1{w^+_{n,0}} \sum_{m=1}^{n-1}
w^+_{m,k}w^+_{n-m,k}\le  \frac2{w^+_{n,0}} \sum_{m=1}^{n/2}
w^+_{m,k}w^+_{n-m,k}
$$
For $n=2$ the claim is easy to check. For $n\ge4$ even we have the lower bound using the Catalan
number formula
$$
w^+_{n,0} = \frac{2 w_{n-2,0}}{n} \ge \frac{1}{\sqrt {2\pi}}\frac{2^n}{n^{3/2}},
$$
where the last inequality holds since the ratio of the two sides is decreasing and converges to 1.
Together with Lemma \ref{l:excursion} this gives the bound
$$
v_{k,n}\le 2 \cdot \frac{2}{\pi}k^2 \sqrt{2\pi} n^{3/2}\sum_{m=1}^{n/2}
\frac{e^{-k^2/(2m)}}{m^{3/2}(n-m)^{3/2}}\le \frac{2}{\pi} \cdot 2\cdot\sqrt{2\pi} \cdot  2^{3/2}k^2
\sum_{m=1}^{n/2}\frac{e^{-{k^2/(2m)}}}{m^{3/2}}.
$$
Let $a_m$ denote the last summand, even for non-integer $m$. Then for all $m\ge 1$ and $\delta\in [0,1]$ we have $a_{m+\delta}\ge 2^{-3/2}a_{m}$. Thus we can bound
the sum by
\[2^{3/2}\int_1^\infty \frac{e^{-k^2/(2x)}}{x^{3/2}}
\,dx<2^{3/2}\int_0^\infty \frac{e^{-k^2/(2x)}}{x^{3/2}}
\,dx= \frac{4\sqrt{\pi}}{k}. \qedhere \]
\end{proof}

A {\bf random cycle} is a cycle chosen from uniform measure from the set of cycles
with the same starting point.

\begin{lemma}[Visits of cycles in $\T_d$]\label{l:visits Td}
The expected amount of time a random cycle of even length $n$ in
$\T_d$ spends at distance $k>0$ from its starting point is at most $2\cdot10^{4}k$. For $k=0$ it is at most $301$.
\end{lemma}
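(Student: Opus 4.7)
The plan is to project the bridge in $\T_d$ onto its distance from the root and reduce to Lemma~\ref{l:e visits}. Let $(X_0,\ldots,X_n)$ be the bridge and set $D_i = d(o,X_i)$. Since $o$ is the unique vertex at distance $0$, the bridge condition $X_n = o$ is equivalent to $D_n = 0$, and under the SRW measure $D$ is itself a $\pm 1$-Markov chain on $\mathbb{Z}_{\geq 0}$ that from $0$ moves deterministically to $1$, and from any $k \geq 1$ moves up with probability $(d-1)/d$ and down with probability $1/d$.

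Next I would decompose the bridge into its maximal excursions from $0$ of lengths $L_1,\ldots,L_J$, with $J = 1 + R$ where $R := |\{m \in (0,n) : D_m = 0\}|$. The key observation is that, conditionally on $(L_1,\ldots,L_J)$, the excursion shapes are independent and each is uniformly distributed over $\mathbb{Z}_{\geq 0}$-excursions of the appropriate length: the probability of any particular shape of length $L$ equals $((d-1)/d)^{L/2-1}(1/d)^{L/2}$, since the numbers of up and down steps at positive levels are fixed by $L$. This uniform law coincides with that of a simple random walk excursion on $\mathbb{Z}$, so Lemma~\ref{l:e visits} applies directly and bounds the expected visits to level $k \geq 1$ within any single excursion by $64k$. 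Summing,
\[
\ev\bigl[\#\{m : D_m = k\}\bigr] \leq 64 k \cdot \ev[J].
\]

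It then remains to bound $\ev[J] = 1 + \ev[R]$. Using $\pr(D_m = 0 \mid D_n = 0) = r_m r_{n-m}/r_n$,
\[
\ev[R] = \sum_{m=1}^{n-1} \frac{r_m r_{n-m}}{r_n}.
\]
Applying the upper bound on $r_{n-m}$ and the lower bound on $r_n$ from Lemma~\ref{l:returns} gives $r_{n-m}/r_n \leq 15\rho^{-m}(n/(n-m))^{3/2} \leq 15 \cdot 2^{3/2}\,\rho^{-m}$ for $m \leq n/2$; hence by the $m \leftrightarrow n-m$ symmetry
\[
\ev[R] \;\leq\; 60\sqrt{2}\,\sum_{\substack{m \geq 2\\ m \text{ even}}} r_m\rho^{-m} \;=\; 60\sqrt{2}\,\bigl(G(1/\rho) - 1\bigr),
\]
where $G(z) = \sum_{m \geq 0} r_m z^m$ is the Green function of $\T_d$. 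A short computation with the first-return generating function $F(z) = (d/(2(d-1)))(1 - \sqrt{1 - \rho^2 z^2})$ gives $G(1/\rho) = 1/(1 - F(1/\rho)) = 2(d-1)/(d-2)$, so $G(1/\rho) - 1 = d/(d-2) \leq 3$ for $d \geq 3$. Combining, $\ev[R] \leq 301$ and $\ev[J] \leq 302$, which with the preceding display yields $w_k(n) \leq 64 k \cdot 302 < 2 \cdot 10^4 k$ for $k \geq 1$ and $w_0(n) = \ev[R] \leq 301$ for $k = 0$.

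The main obstacle is the identification of the conditional law of excursion shapes as uniform --- this is the step that lets us port Lemma~\ref{l:e visits} from $\mathbb{Z}$ to $\T_d$. After that, bounding $\ev[R]$ is a mechanical computation, but keeping the constant at $301$ rather than the $\sim 2000$ one would get from the pointwise summation of $r_m/\rho^m \leq 10/m^{3/2}$ requires the sharper step of explicitly evaluating the Green function at $1/\rho$ via the first-return generating function.
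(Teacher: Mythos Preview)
Your proof is correct and follows essentially the same route as the paper: project to the distance process, decompose into excursions from $0$, use that (conditionally on their lengths) these excursions are uniform so that Lemma~\ref{l:e visits} applies within each, and finally bound the expected number of returns $\ev[R]$. The paper cites \cite{BougerolJeulin} for the excursion decomposition, whereas you give a direct argument via the constant-probability observation; both are fine.

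The one genuine difference is in how $\ev[R]$ is controlled. The paper applies both sides of Lemma~\ref{l:returns} to the sum $\sum r_{2k}r_{n-2k}/r_n$ and then bounds the resulting sum $\sum n^{3/2}/((2k)^{3/2}(n-2k)^{3/2})$ by an integral, obtaining $\ev M < 1 + 150\cdot 2 = 301$. You instead bound the sum by $60\sqrt{2}\,(G(1/\rho)-1)$ and evaluate the Green function exactly via the first-return generating function, getting $G(1/\rho)-1 = d/(d-2)\le 3$. Your route is arguably cleaner and in fact yields the slightly better constant $\ev[R]\le 180\sqrt{2}\approx 255$; the paper's integral bound has the advantage of being self-contained (no need to recall the formula for $F$).

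One small slip: for $k=0$ you write $w_0(n)=\ev[R]$, but the count of visits to $0$ on $\{0,1,\ldots,n\}$ is $\ev[R]+2$, not $\ev[R]$. Since your actual bound on $\ev[R]$ is about $255$, the stated inequality $w_0(n)\le 301$ survives. (The paper's proof has the analogous off-by-one: it shows $\ev M<301$ where $M=R+1$, and the visit count is $M+1$.)
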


\begin{proof}
 Consider a random cycle of length $n$ in $\T_d$ from the root $o$. Let $R_j$ be the distance of the walk from $o$ at time $j$. The following is well-known, see Section 2 of \cite{BougerolJeulin}.

Let $0=T_0<T_1<\cdots<T_M=n$ be the (random) times when
$R_j$ is zero. Given the values of $T_i$ and $M$, the
sections of $R_j$ in between are independent simple random walk
excursions on $\mathbb Z$. In particular, given this information, Lemma \ref{l:e visits}
implies that the conditional expectation of the number of
visits of $R_j$ to $k$ is bounded above by $64kM$. So by Lemma \ref{l:e visits} it
suffices to show that $\ev M$ is bounded by a constant
independent of $n$.

Let $r_n$ be the probability that the simple random walk on $\T_d$ visits its starting point at time $n$. By the Markov property, we have
$$
\ev M=1+\sum_{k=1}^{n/2-1} P(R_{2k}=0)=1+\frac{1}{r_n}\sum_{k=1}^{n/2-1} r_{2k}r_{n-2k}\le
1+\frac{3}{2}\cdot 10^2 \sum_{k=1}^{n/2-1}\frac{n^{3/2}}{(2k)^{3/2}(n-2k)^{3/2}}
$$
where the last inequality follows form Lemma \ref{l:returns}.
Since the summand is convex as a function of $k$, the $k$ term is bounded above by
$$
\int_{k-1/2}^{k+1/2} \frac{n^{3/2}}{(2x)^{3/2}(n-2x)^{3/2}}\,dx
$$
and the entire sum is at most
$$
\int_{1/2}^{n/2-1/2} \frac{n^{3/2}}{(2x)^{3/2}(n-2x)^{3/2}}\,dx=\frac{2 (n-2)}{\sqrt{(n-1)n} }<2
$$
This gives $\ev M < 301$.
\end{proof}

Finally, we consider the limiting process of the random cycle starting at $o$ in $\T_d$.

\begin{proposition}[The infinite cycle in $\T_d$]\label{p:inf cycle}
Let $(X^n_k,k=0\ldots n)$ be the random cycle of even length $n$ from $o$ to $o$ in $\T_d$.
Then as $n\to\infty$ the process $(X^n_k,k=0\ldots n)$ converges in distribution to a process $(X_k,k\ge 0)$ called the {\it infinite cycle}, a time-homogeneous Markov process with transition probabilities \eqref{e:inf cycle}.
\end{proposition}
\begin{proof}
The random cycle of length $n$ is a time-inhomogeneous Markov process. Let $p^n_k(x,y)$ be denote its transition probabilities from $x$ to $y$ at time $k$. It suffices to show that the ratios of $p^n_k(x,x_+)/p^n_k(x,x_-)$ converge, (where $x_+,x_-$ denotes a child or the parent of $x$, respectively) as
any probability of the form
$$
\pr(X^n_1=x_1,\ldots, X^n_k=x_k)
$$
can be written as an expression containing finitely many of these probabilities. With $p_n(x,y)$ denoting the simple random walk transition probabilities in $\T_d$, the standard path counting argument gives
$$
\frac{p^n_k(x,x_+)}{p^n_k(x,x_-)}=
\frac{p_{n-k-1}(x_+,o)}{p_{n-k-1}(x_-,o)}.
$$
we now use Theorem 19.30 in \cite{woess2000random} which for $x$ fixed and  $n\to\infty$ gives
$$p_n(o,x)=(c+o(1))\left(1+\frac{d-2}{d}|x|\right) (d-1)^{-|x|/2} \rho(\T_d)^nn^{-3/2}
$$
where $|x|$ is the graph distance of $x$ from $o$, to get
\begin{equation}\label{e:inf cycle}
\lim_{n\to\infty}\;\frac{p_{n-k-1}(x_+,o)}{p_{n-k-1}(x_-,o)} = \frac{1}{d-1} \frac{d+(d-2)(|x|+1)}{d+(d-2)(|x|-1)}=:\frac{p^*(x,x_+)}{p^*(x,x_-)}.
\end{equation}
So $(X_k, g\ge 0)$ is a time-homogeneous Markov process with transition probabilities $p^*$ (which are determined by \eqref{e:inf cycle} since they sum over the neighbors of $x$ to 1). Clearly $|X_n|$ is also a time-homogeneous Markov process, which has up/down transition probability ratio from $x\in \mathbb Z_+$ given by
\begin{equation}\notag
\frac{d+(d-2)(x+1)}{d+(d-2)(x-1)}.
\end{equation}
Note that when $d=2$ we get the reflected simple random walk, as expected.
\end{proof}

\begin{corollary}[The infinite nullcycle]\label{c:inifinite nullcycle}
Let $G$ be a $d$-regular graph, and $(\bar X^n_k,k=0\ldots n)$ be the $k^{th}$ step of a uniformly chosen random nullcycle from a vertex $o$ to $o$.
Then  $\bar X^n_k$ converges in distribution as $n\to\infty$ to a limiting process $(\bar X_k,k\ge 0)$ called the {\it infinite nullcycle}. In particular, the fixed-time distributions converge.
\end{corollary}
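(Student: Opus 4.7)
The plan is to reduce the statement to Proposition \ref{p:inf bridge} via the universal cover. Fix a vertex $o\in G$ and let $\pi\colon \T_d\to G$ be a universal covering map with a fixed lift $\tilde o\in\pi^{-1}(o)$. Paths in $G$ starting at $o$ lift uniquely to paths in $\T_d$ starting at $\tilde o$, and vice versa the projection of a path in $\T_d$ is a path in $G$. The first key observation is that by the very definition of a nullcycle (a cycle whose lift to the universal cover remains a cycle), lifting gives a bijection between nullcycles of length $n$ in $G$ from $o$ to $o$ and walks of length $n$ in $\T_d$ from $\tilde o$ to $\tilde o$, i.e.\ elements of $\mathcal N_n(d)$.

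Under this bijection, the uniform measure on nullcycles of length $n$ corresponds to the uniform measure on $\mathcal N_n(d)$, which is exactly the law of the simple random walk bridge $(X^n_k)_{k=0,\dots,n}$ in $\T_d$ from $\tilde o$ to $\tilde o$. Consequently $\bar X^n_k = \pi(X^n_k)$ as processes in law. The projection $\pi$ is, for each fixed $k$, a deterministic function from $V(\T_d)$ (with the discrete topology) to $V(G)$; in particular it is continuous. Applying the continuous mapping theorem to the convergence in distribution $X^n_\cdot \Rightarrow X_\cdot$ provided by Proposition \ref{p:inf bridge}, one obtains
\[
\bar X^n_\cdot \;=\; \pi\bigl(X^n_\cdot\bigr) \;\Longrightarrow\; \pi(X_\cdot) \;=:\; \bar X_\cdot,
\]
which is the desired convergence of the random nullcycle to the infinite nullcycle. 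The fixed-time convergence is then immediate from the finite-dimensional distributional convergence of $X^n_k$ to $X_k$.

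There is essentially no substantial obstacle: the only point requiring a word of care is that the bijection between length-$n$ nullcycles at $o$ in $G$ and length-$n$ bridges at $\tilde o$ in $\T_d$ is measure-preserving (both are counting measures on finite sets), so no Jacobian-type factor appears. One should also note that the limiting process $\bar X_\cdot$ need not be Markov on $G$ even though $X_\cdot$ is Markov on $\T_d$, but the statement of the corollary only asserts convergence in distribution of the process (equivalently, of all finite-dimensional marginals), which follows directly from the continuous-mapping argument above.
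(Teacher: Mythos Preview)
Your proof is correct and takes essentially the same approach as the paper: the paper's proof simply observes that $\bar X^n_k$ is the image of the random walk bridge in $\T_d$ under the universal cover map, and then invokes Proposition \ref{p:inf bridge}. You have spelled out the details (the bijection between nullcycles and bridges, the measure-preservation, the continuous mapping theorem) that the paper leaves implicit, but the argument is the same.
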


\begin{proof}
Note that $\bar X^n_k$ is just the image under the universal cover map from $
\T_d$ to $G$ of the random cycle in $\T_d$. So the claim follows from Proposition \ref{p:inf cycle}.
\end{proof}

\section{Properties of nullcycles}
\label{s:nullcycles}

This section establishes some important properties of
random nullcycles in graphs. But first we need a simple well-known lemma.

\begin{lemma}[Spectral radius and hitting probabilities]\label{l:kicsi}
Let $G$ be a connected $d$-regular graph and let $o$ be a
vertex. Let $%
p_{n}(o,A)$ denote the probability that a random walk of length $n$ starting
at $o$ ends in the finite vertex set $A$. Then with the spectral radius $\rho(G)$ we have
\[
p_{n}(o,A) \le \sqrt{|A|}\rho (G)^{n} +\frac{2|A|}{|G|}\text{.}
\]
\end{lemma}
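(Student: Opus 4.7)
The plan is to apply the spectral theorem to the Markov operator $M$, decomposing $\delta_o$ and $\mathbf{1}_A$ along the trivial eigenspaces ($\pm 1$) and their orthogonal complement, where $M$ has norm $\rho(G)$. First I would reinterpret the stated ``probability of return'' $p_n(o,A)$ as the probability of being in $A$ at time $n$ and rewrite it as the inner product
\[
p_n(o,A)=\sum_{a\in A}p_n(o,a)=\langle M^n\delta_o,\mathbf{1}_A\rangle.
\]
Since $G$ is connected and $d$-regular, the eigenvalue $+1$ is simple with normalized eigenvector $v_+=\mathbf{1}/\sqrt{|G|}$; if $G$ happens to be bipartite with parts $V_1,V_2$, the eigenvalue $-1$ is also simple with $v_-=(\mathbf{1}_{V_1}-\mathbf{1}_{V_2})/\sqrt{|G|}$ (otherwise set $v_-=0$). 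By the definition of $\rho(G)$ as the second largest absolute eigenvalue, the restriction of $M$ to the orthogonal complement $W=\mathrm{span}(v_+,v_-)^\perp$ has operator norm at most $\rho(G)$.

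Next I would expand $\delta_o=\alpha_+v_++\alpha_-v_-+g$ and $\mathbf{1}_A=\beta_+v_++\beta_-v_-+f$ with $g,f\in W$, so that
\[
\langle M^n\delta_o,\mathbf{1}_A\rangle=\alpha_+\beta_++(-1)^n\alpha_-\beta_-+\langle M^n g,f\rangle.
\]
A direct computation yields $\alpha_+\beta_+=|A|/|G|$, and since $|\alpha_-|\le 1/\sqrt{|G|}$ and $|\beta_-|\le|A|/\sqrt{|G|}$, also $|\alpha_-\beta_-|\le|A|/|G|$. For the remaining term, Cauchy--Schwarz combined with the spectral bound on $W$ gives
\[
|\langle M^n g,f\rangle|\le\rho(G)^n\|g\|\,\|f\|\le\rho(G)^n\sqrt{|A|},
\]
using $\|g\|\le\|\delta_o\|=1$ and $\|f\|\le\|\mathbf{1}_A\|=\sqrt{|A|}$. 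Adding the three estimates produces the claimed inequality.

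There is no real obstacle; the only point to be careful about is that the factor $2$ in $2|A|/|G|$ is exactly what the $-1$-eigenspace contributes in the bipartite case (it vanishes if $G$ is non-bipartite, and in that case one recovers the cleaner bound $|A|/|G|+\sqrt{|A|}\rho(G)^n$). In the bipartite case $\rho(G)=1$ is possible, but then the inequality is still nontrivial only when $\sqrt{|A|}$ is meaningful, and the argument above continues to hold verbatim.
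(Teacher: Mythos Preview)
Your proof is correct and follows essentially the same approach as the paper: decompose along the trivial eigenspaces $\{v_+,v_-\}$ and their orthogonal complement, use that $M$ restricted to the complement has norm $\rho(G)$, and bound the trivial contributions by $2|A|/|G|$ via Cauchy--Schwarz. The only cosmetic difference is that the paper projects just $\chi_o$ and then pairs with $\chi_A$, whereas you decompose both vectors; your write-up is in fact a bit cleaner. (Your closing remark that ``$\rho(G)=1$ is possible'' in the bipartite case is not quite right under the paper's convention---the set of absolute values is taken, so the second largest element is strictly less than $1$ for $|G|>2$---but this is a side comment and does not affect the argument.)
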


\begin{proof} We prove the claim for finite graphs, the infinite case is similar but simpler.
Let $m=|G|,$ the number of vertices of $G$. Let $v_{0}$ denote the function on $G$ that takes
value $1/\sqrt{m}$ everywhere. Then $v_{0}M=v_{0}$. When $G$ is not bipartite, let $l_*^{2}(G)$ denote the
orthogonal subspace of $v_{0}$ in $l^{2}(G)$.  When $G$ is bipartite, let $\mathcal I$
be an independent subset of $G$ of size $m/2$ containing $o$ and let $v_{1}$
be the function on $G$ that takes values $1/\sqrt{m}$ on $\mathcal I$ and $-1/\sqrt{m}$ otherwise.
Then $v_{1}M^{n}=(-1)^nv_{1}$. Let $l_{\ast }^{2}(G)$ denote the subspace orthogonal to $v_0$ and $v_1$ in $l^{2}(G)$.

Now  $\rho (G)$ equals the norm of $M$ on $
l_{*}^{2}(G)$. Let $\delta_A$ denote the indicator function of the vertex set $A$. Let $v$ be a projection of $\delta_o$ onto $l_*^2(G)$, and let $v_*=\delta_o-v$.
Then $\|v\|\le 1$. For $G$ bipartite, we can write $v_*=a(v_0+v_1)$, with $a=1/\sqrt{m}$. We have
\begin{equation}\label{e:above}
\langle v_* M^n , \delta_A\rangle =
\langle  a(v_0+v_1)M^n, \delta_A\rangle =
\langle  a(v_0+(-1)^n)v_1, \delta_A\rangle
\end{equation}
Since $v_0$ and $v_1$ are orthonormal, writing $\delta_A$
in the orthonormal basis we see that \eqref{e:above} is bounded above by $$
a\langle v_0,\delta_A\rangle+a|\langle v_1,\delta_A\rangle| \le {2|A|}/m.
$$
Similarly, in the non-bipartite case
$\langle  v_*M^n, \delta_A\rangle = |A|/m$.
We now have
\begin{eqnarray*}
p_{n}(o,A)=\langle \delta_o M^{n},\delta_A\rangle =
\langle v_* M^n ,\delta_A \rangle+\langle v M^{n},\delta_A\rangle
\le 2|A|/|G|+\|v\|\cdot \rho(G)^{n}\cdot \|\delta_A\|.
\end{eqnarray*}
Here
$\|\delta_A\|=\sqrt{|A|}$. The claim follows.
\end{proof}

\subsection{Visits of nullcycles}

\begin{proposition}[Visits of nullcycles] \ \label{p:visits}
For any infinite $d$-regular rooted connected graph $(G,o)$ with $\rho(G)<1$ the number of visits $V_A$ to a finite vertex set $A$ of a random
nullcycle of length $n$ starting at $o$ satisfies $$\ev V_A\le 2\cdot 10^4\frac{|A|}{(1-\rho(G))^2}.$$
This is at most $ 10^7|A|$ if $\rho(G)\le 19/20$. Note that $19/20>\rho(\T_d)$ for $d\ge 3$.

For any finite $d$-regular graph $G$ we also have
$$
\ev V_A \le 4\cdot 10^4 |A|\left(\frac{1}{(1-\rho(G))^2}+\frac{72 n^2}{|G|}\right).
$$
  This is at most $2\cdot 10^{7}|A|$ if $\rho(G)\le 19/20$ and $n^2\le |G|$.
\end{proposition}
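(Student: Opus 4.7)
The plan is to lift the problem to the universal cover $\pi\colon\T_d\to G$ and combine Lemma \ref{l:visits Td} on $\T_d$ with a generating-function spectral argument on $G$. By Corollary \ref{c:inifinite nullcycle}, a uniform random nullcycle of length $n$ in $G$ based at $o$ is the image under $\pi$ of a uniform simple random walk bridge of length $n$ in $\T_d$ based at a fixed lift $\tilde o$, so $V_A$ equals the number of visits of the bridge to $\pi^{-1}(A)$. The stabilizer of $\tilde o$ in $\mathrm{Aut}(\T_d)$ acts transitively on each sphere of radius $\ell\ge 1$ around $\tilde o$ (of size $d(d-1)^{\ell-1}$) and preserves the bridge law, so Lemma \ref{l:visits Td} yields the per-vertex bound $\ev V_u\le 2\cdot 10^4\,\ell/(d(d-1)^{\ell-1})$ for any $u$ at distance $\ell\ge 1$ from $\tilde o$, and $\ev V_{\tilde o}\le 301$. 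Since the vertices of $\pi^{-1}(v)$ at distance $\ell$ from $\tilde o$ are in bijection with the non-backtracking walks of length $\ell$ from $o$ to $v$ in $G$, summing over $\pi^{-1}(v)$ for a single $v$ gives
\[
\ev V_v \;\le\; 301\,\one[v=o] + 2\cdot 10^4 \sum_{\ell\ge 1} \frac{\ell\,B_\ell(o,v)}{d(d-1)^{\ell-1}},
\]
where $B_\ell(o,v)$ counts non-backtracking walks of length $\ell$ from $o$ to $v$ in $G$; the sum over $v\in A$ is taken at the end.

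Next I would prove the per-vertex spectral bound on the weighted non-backtracking sum using an Ihara-type identity. The recurrence $B_\ell=AB_{\ell-1}-(d-1)B_{\ell-2}$ for $\ell\ge 3$ (with $B_0=I,\,B_1=A,\,B_2=A^2-dI$) gives the operator identity
\[
\bigl(I-tA+(d-1)t^2 I\bigr)\sum_{\ell\ge 0}B_\ell\,t^\ell \;=\;(1-t^2)\,I,
\]
whose convergence at $t=1/(d-1)$ is equivalent to $\rho(G)<1$. At that point the sum equals $\tfrac{d-2}{d-1}(I-M)^{-1}$, and its $t$-derivative equals an operator of the form $\bigl(\text{polynomial in }M\text{ of operator norm }\le d^2\bigr)\cdot(I-M)^{-2}/d^2$, which on $\ell^2(G)$ has operator norm at most $d/(1-\rho(G))^2$ via $\|(I-M)^{-1}\|\le 1/(1-\rho(G))$. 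Since $\sum_\ell \ell B_\ell(o,v)/(d(d-1)^{\ell-1})$ equals $1/d$ times the $(o,v)$-entry of this derivative, bounding matrix entries by the operator norm yields the per-vertex estimate $\ev V_v\le C/(1-\rho(G))^2$; summing over $v\in A$ gives the infinite-case bound.

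The finite case is the main obstacle, since the trivial eigenvalue(s) of $M$ make the series diverge at $t=1/(d-1)$. I would handle this along the lines of the proof of Lemma \ref{l:kicsi}: truncate the non-backtracking sum at $\ell\le n$ (walks of length $>n$ cannot contribute to a bridge of length $n$) and split $\delta_v$ into its projection onto $\mathrm{span}(\mathbf 1, v_{-1})$ (the second factor only if $G$ is bipartite) plus its orthogonal component in $\ell^2_\star(G)$. The $\ell^2_\star$ component reproduces the infinite-case $|A|/(1-\rho(G))^2$ bound via the restricted spectral bound $\|(I-M)^{-1}\|\le 1/(1-\rho(G))$ on $\ell^2_\star(G)$. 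On the trivial subspace, $B_\ell\mathbf 1=P_\ell(d)\mathbf 1$ with $P_\ell(d)=d(d-1)^{\ell-1}$ (the total number of length-$\ell$ non-backtracking walks from any vertex), so the trivial-subspace contribution to $\ell B_\ell(o,v)/(d(d-1)^{\ell-1})$ is exactly $\ell/|G|$, and the truncated sum $\sum_{\ell=1}^n \ell/|G|\le n^2/|G|$ produces the $72\,n^2|A|/|G|$ correction after summing over $v\in A$. The delicate point is tracking the constants to match the stated $4\cdot 10^4$ and $72$; qualitatively no new analytic tool beyond Lemmas \ref{l:visits Td}, \ref{l:kicsi} and the Ihara identity is required.
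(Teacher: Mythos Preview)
Your approach is essentially the same as the paper's: both lift the nullcycle to the bridge in $\T_d$, use Lemma~\ref{l:visits Td} and sphere-transitivity to reduce $\ev V_A$ to $2\cdot10^4\sum_{\ell\ge 1}\ell\, p_\ell$ with $p_\ell$ the non-backtracking hitting probability of $A$, and then convert to simple random walk via the Ihara/Bartholdi identity. Your operator form $(I-tA+(d-1)t^2I)\sum_\ell B_\ell t^\ell=(1-t^2)I$ is precisely the matrix version of the scalar formula the paper quotes from \cite{OrtnerWoess}; evaluating at $t=1/(d-1)$ and bounding entries by the operator norm is exactly the paper's $z=1$ case rewritten.

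The one genuine wrinkle is in your finite-case execution. After truncating at $\ell\le n$ and splitting off the trivial eigenspace, you assert that on $\ell^2_\star(G)$ the partial sum $\sum_{\ell=1}^n \ell B_\ell/(d(d-1)^{\ell-1})$ ``reproduces the infinite-case bound''. But the infinite-case bound is for the \emph{full} series, and on $\ell^2_\star$ the individual terms $\langle P_\star\delta_o,B_\ell P_\star\delta_v\rangle$ need not be nonnegative, so a partial sum is not automatically dominated by the limit. One can repair this (e.g.\ by bounding the absolute series uniformly on $\{|\mu|\le\rho\}$, or by controlling the tail via the radius of convergence $>1/(d-1)$), but it takes extra work you have not indicated and makes tracking the constant $4\cdot10^4$ unpleasant. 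The paper sidesteps the issue entirely: it keeps the sum $\sum_{\ell\le n}\ell\, p_\ell$ with its \emph{nonnegative} terms $p_\ell$, bounds it by $z^{1-n}\mathcal C'(z)$ for $z=1-1/(2n)<1$, and only then passes to the SRW Green function $\mathcal G(x)$ with $x<1$, where Lemma~\ref{l:kicsi} splits each return probability as $\rho^k+2/|G|$. This gives the $1/(1-\rho)^2$ and $n^2/|G|$ terms directly, without any partial-sum subtlety. If you adopt that device (work at $t<1/(d-1)$ before splitting) your argument becomes identical to the paper's.
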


\begin{proof}
Let $X_j$ be a random cycle in the $d$-regular tree $\T_d$ started at the root  $o$,
and let $\bar X_j$ be its projection to the graph $G$.
Then we have
$$
\ev V_A= \ev \sum_{j=0}^n \one (\bar X_j\in A) =  \sum_{j=0}^n \pr(\bar X_j\in A).
$$
Condition on $|X_j|$, the distance from the root, and then sum over all possible options to get
$$
\ev V_A= \sum_{j=0}^n  \sum_{k=0}^n P(|X_j|=k) P(\bar X_j\in A \;|\;|X_j|=k).
$$
Note that
given $|X_j|=k$, the distribution of $X_j$ is uniform on the $k$-sphere about $o$ in the tree.
Thus the distribution on $\bar X_j$ in the graph $G$ is that
of the $k$th step of a nonbacktracking random walk. So let
$p_k$ denote the
probability that the $k$th step of the nonbacktracking walk is in $A$.

Switching the order of summation we get
$$
\ev V_A=\sum_{k=0}^n p_k \sum_{j=0}^n P(| X_j|=k)\le 500p_0+\sum_{k=1}^n 2\cdot 10^4 k p_k
$$
where the last inequality is based on the fact that the
$j$-sum gives the expected number of visits to distance $k$ for the random cycle
in $\T_d$, and the result of Lemma \ref{l:visits Td}. Note that $p_0=\one(o\in A)$.
The above can be bounded by Green function techniques as follows. Define
$$
\mathcal C(z) =\sum_{k=0}^\infty p_k\,z^k,
$$
the generating function for the proportion of nonbacktracking paths that start from $o$ and end in $A$.
For any  $z\in (0,1]$ we have
$$
\sum_{k=0}^nkp_k \le z^{1-n}\sum_{k=1}^\infty kp_k z^{k-1} =z^{1-n} C'(z)
$$
The right
hand side is a power series with nonnegative coefficients,
so it always makes sense but may equal $+\infty$.
Rewriting our bound in terms of $\mathcal C$ we get
$$
\ev V_A \le 2\cdot 10^4 z^{1-n} \mathcal C'(z)+500\cdot \one(o\in A).
$$
Let
$\mathcal G(z)$ be the analogous generating function for
simple random walk. It was shown in \cite{bartholdi},
(see formula (2.3) in \cite{OrtnerWoess}) that for any
$d$-regular graph we have
$$
\mathcal C (z)= \frac{\one(o\in A)}{d}+\frac{(d-1)^2-z^2 }{d
   \left(d-1+z^2\right)}\;\mathcal G\left(\frac{d\, z}{d-1+z^2}\right).$$
    Now with $x=dz/(d-1+z^2)$ we compute
$$
\mathcal C'(z) = a_0 \mathcal G(x) + a_1 \mathcal G'(x).
$$
where
\begin{eqnarray*}
a_0&=&-\frac{2 (d-1) z}{\left(d+z^2-1\right)^2} \le 0,
\\a_1&=&\frac{d^3-d^2 \left(z^2+3\right)+d \left(z^2+3\right)+z^4-1}{\left(d+z^2-1\right)^3}\le 1,
\end{eqnarray*}
for our range of parameters $d\ge 2$ and $z\in(0,1]$. We now consider two cases.

 1. For $G$ infinite with $\rho=\rho(G)<1$, we use the case $z=1$, noting that the radius of convergence of $\mathcal G$ is $1/\rho>1$.
Since $\mathcal G$ and its derivative are nonnegative, we get the upper bound
$$
\frac 12 10^{-4} \ev V_A \le   |A|+\mathcal G'(1)\le |A|+\bar{\mathcal G}'(1), \qquad \bar{\mathcal G}(z) =\frac{|A|}{1-z\rho}.
$$
The last inequality uses the fact that the probability that simple random walk at time $k$ is in  $A$ is bounded above by $|A|\rho^k$,
 so we can replace $\mathcal G'(z)$ by  $\bar{\mathcal G}'(z)$. Finally, we have
\[|A|+\bar{\mathcal G}'(1)
=
|A|\frac{1-\rho +\rho^2}{(1-\rho)^2}
\le \frac{|A|}{(1-\rho)^2}. \]

 2. For $G$ finite, we use the case $z<1$.
Since $\mathcal G$ and its derivatives are nonnegative, we get the upper bound
$$
C'(z)\le   \mathcal G'(x)\le \bar{\mathcal G}'(x).
$$
For the last inequality, we use $\rho=\rho(G)$,
$$
\bar{\mathcal G}(x) =|A|\sum_{k=0}^\infty x^k (\rho^k+2/{|G|})= \frac{2}{|G|} \frac{|A|}{1-x}+ \frac{|A|}{1-x\rho}.
$$
and use Lemma \ref{l:kicsi} to bound the return probabilities. This gives
$$
|A|+\bar{\mathcal G}'(x)=
\frac{2|A|}{|G|} \frac{1}{(1-x)^2}+
|A|\frac{\rho + (1-\rho x)^2}{(1-\rho  x)^2}\le \frac{2}{|G|} \frac{|A|}{(1-x)^2}+\frac{|A|}{(1-\rho )^2}.
$$
We now have
$$
\frac{1}{1-x} = \frac{d-1+z^2}{ (d-1-z)(1-z)} \le \frac{d}{d-2} \frac1{1-z}\le \frac3{1-z}
$$
and set $z=1-1/(2n)$ to get
$$
\ev V_A \le 2 \cdot 10^4 z^{-n} (\mathcal C'(z)+|A|)\le 2\cdot  10^4 (1-1/(2n))^{-n}|A|\left(\frac{2\cdot 3^2 \cdot 2^2 n^2}{{|G|}}+\frac{1}{(1-\rho)^2}\right)
$$
since for $n\ge 1$ the $(1-1/(2n))^{-n}\le 2$, and the claim follows.
\end{proof}

\subsection{Cycles and nullcycles}\label{c and n}
We now turn to the connection between ordinary cycles and nullcycles. We recall the definition
of nontrivial cycles.
\begin{definition}\label{d:trivial} Call a cycle of length $k$ in a graph  a
{\bf nontrivial cycle} if either
\begin{itemize}
\item for some directed non-loop edge $e$,
the number of times the cycle passes
through $e$ differs from the number of
times it passes through the reversal of $e$
\item or $k=1$.
\end{itemize}
\end{definition}
This definition differs slightly from ``vanishing in homology'', but is precisely what we need
in our proof (briefly we use $\mathbb Z$-homology for $k\ge 2$, and $\mathbb Z_2$-homology for $k=1$).
Our goal there is to take a nullcycle and make it non-nullhomotopic. We do this by swapping the direction of nontrivial sub-cycles of length $k\ge 2$. For loops this does not work (they do not have direction), so we have to have a separate argument for $k=1$: we add or erase them.

Cycles not covered by this definition are called {\bf trivial}. For example,
nullcycles are trivial and simple cycles are nontrivial.

The following
theorem is another main ingredient in the proof of Theorem \ref{vegesbecsles}. Let $\mathcal N_{n}$ denote the set of nullcycles starting at $o$ in the rooted graph $(G,o)$.

\begin{theorem}[Cycles and nullcycles]
\label{t:null and ordinary}Let $(G,o)$ be a $d$-regular rooted graph, and let $n,k,\ell>0$.

For a nullcycle  $w\in \mathcal{N}_{nk}$ let  $\chi(w,a,k)=\chi_\ell(w,a,k)$ denote the
indicator function that the path segment $w_{a},\ldots ,w_{a+k}$ is a nontrivial
$k$-cycle and that the vertex $w_a$ is visited at most
$\ell$ times by $w$. Let
\begin{equation}\label{yairstar}
\chi_w=\sum_{j=0}^{n-1} \chi (w,jk,k).
\end{equation}
Then with $c_{1}=1/16$ and $c_{k}=(d-1)^{-k}/2$ (for $k\ge 2$) we have
\begin{equation*}
\left\vert W_{nk}(o,o)\right\vert \geq \frac{1}{14}\sum_{ w\in \mathcal{N}_{nk}}
\exp\left(c_{k}\,\chi_w/\ell\right),
\end{equation*}
where $W_{nk}(o,o)$ is the set of cycles of length $nk$ starting at $o$.
\end{theorem}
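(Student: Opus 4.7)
The plan is to build, for each nullcycle $w\in\mathcal N_{nk}$, a family of returning walks produced by local segment reversals, then to lower-bound the number of distinct walks per nullcycle and upper-bound the multiplicity of the resulting map. For each good position $j$ (one with $\chi(w,jk,k)=1$) the block segment $c_j=(w_{jk},\ldots,w_{(j+1)k})$ is a nontrivial $k$-cycle at $v_j:=w(jk)$; since $c_j$ is nontrivial in $H_1(G)$ and $H_1$ of a graph is torsion-free, the reverse walk $c_j^{-1}$ is a genuinely different $k$-cycle at $v_j$. For any subset $T\subseteq S(w)$ of good positions, simultaneously reversing the $T$-blocks yields a returning walk $w_T\in W_{nk}(o,o)$, defining a map $\Phi:(w,T)\mapsto w_T$.

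First I would verify that reversal preserves both the visit count of every vertex and the property of being a nontrivial $k$-cycle at the starting vertex, so that $S(w_T)=S(w)$ is invariant under the moves. Consequently the $\Phi$-preimage of $w'\in W_{nk}(o,o)$ is $\{(w'_T,T): T\subseteq S(w'),\, w'_T\in\mathcal N_{nk}\}$; writing $N(w')$ for its size, double counting gives
\[
|W_{nk}(o,o)|\;\ge\;\frac{\sum_{w\in\mathcal N_{nk}}2^{\chi_w}}{\max_{w'}N(w')}.
\]
It would then suffice to show $N(w')\le 14\cdot 2^{\chi_{w'}(1-c_k/(\ell\log 2))}$, which together with the identity $2^{\chi_w}=2^{\chi_w(1-c_k/(\ell\log 2))}\cdot e^{c_k\chi_w/\ell}$ yields the claimed inequality.

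The heart of the proof is the control on $N(w')$. For $w'_T$ to be a nullcycle its image in $H_1(G)$ must vanish, yielding the linear constraint $\sum_{j\in T}\gamma_j=\tfrac12[w']_{H_1}$, where $\gamma_j\ne 0$ is the class of the block $c_j$. The $\ell$-constraint (at most $\ell$ good segments share a vertex) bounds the local degeneracy of the $\gamma_j$'s, and the constant $c_k=(d-1)^{-k}/2$ for $k\ge 2$ reflects that among the $\approx (d-1)^k$ available ``directions'' for a length-$k$ nontrivial cycle, only a $(d-1)^{-k}$ fraction of subsets $T$ can cancel to satisfy the constraint. A Littlewood--Offord style estimate on subset sums in $H_1(G)$ with the per-vertex cap then gives the required bound on $N(w')$. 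The case $k=1$ needs a separate treatment, since a length-$1$ block is a single edge (a loop) and the reversal of a half-loop is not distinct from itself, producing the slightly different constant $c_1=1/16$ via a direct combinatorial analysis. The main obstacle will be the Littlewood--Offord type bound on $N(w')$: it must be quantitatively sharp enough to yield the exponent $c_k/\ell$ after the universal constant $14$ is absorbed into the slack between $2^{\chi_w}$ and $e^{c_k\chi_w/\ell}$.
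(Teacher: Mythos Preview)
Your block-reversal rewiring for $k\ge 2$ is exactly the paper's construction, and the homological constraint you write down is the right one. Two repairable issues there. First, the displayed double-counting bound with $\max_{w'}N(w')$ in the denominator does \emph{not} yield the theorem: since $\chi_w$ varies across nullcycles, you cannot cancel $2^{\chi_w(1-c_k/(\ell\log 2))}$ against a single global maximum. You must instead use the per-orbit identity $|W|\ge \sum_{w\in\mathcal N}2^{\chi_w}/N(w)$, which follows directly from your observation $S(w_T)=S(w)$ (the orbits partition a subset of $W$, and $\chi$ is constant on orbits). Second, a Littlewood--Offord estimate in the classical sense saves only a factor $1/\sqrt{\chi_w}$; the exponential saving $2^{-c_k\chi_w/\ell}$ requires exhibiting roughly $\chi_w/((d-1)^k\ell)$ of the $\gamma_j$ that are \emph{linearly independent}. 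The paper's mechanism for this is: edge-disjoint cycles have linearly independent traces, and the edge-sharing graph on good cycles has degree at most $\ell d(d-1)^{k-2}$ (if two $k$-cycles share an edge their basepoints lie within distance $k-1$, and each vertex in that ball is the basepoint of at most $\ell$ good cycles), so a greedy argument gives the required independent set. Your ``local degeneracy'' remark points this way, but the edge-sharing $\Rightarrow$ geometric proximity step is where the $(d-1)^{-k}$ actually originates, and it is not a Littlewood--Offord bound.

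The genuine gap is $k=1$. You correctly note that a half-loop is its own reverse, but then the reversal map $\Phi$ is the identity and produces no new walks at all---the whole scheme collapses, not merely the constant. The paper's $k=1$ argument uses an entirely different rewiring: for vertices with a \emph{single} half-loop, delete all loop-traversals at lightly visited vertices and reinsert the same total number at arbitrary such visits; for vertices with \emph{multiple} loops, keep the visit times fixed but swap which loop is traversed. In the first case the equivalence class is parametrized by compositions of an integer, and bounding the probability that every per-vertex loop count comes out even requires a separate, nontrivial combinatorial lemma (the paper's Lemma following the theorem). None of this is a reversal, and it is the most delicate part of the proof rather than a routine variant.
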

The quantity $\chi_\omega$ will be estimated in
terms of the parameters $\gamma_k(G)$. Heuristically, if it is large, it means
that there are many different places in $\omega$ where rewiring is possible. The point in limiting the number of visits by $\ell$ is a convenient way to make sure that if there are many possible rewiring times, then they happen also at many different locations.

\begin{proof}
Let us denote $W=W_{nk}(o,o)$, and $\mathcal{N=N}%
_{nk}$, the subset of nullcycles.  We first break $W$ into equivalence classes, called
{\bf rewiring classes}. A loop is called {\bf single} if
its vertex has no other loops. Otherwise, we call it a {\bf multiple loop}.

When $k=1$ we break up the sum on the right of \eqref{yairstar} into a sum
over single loops and a sum over multiple loops, counted as
$\chi_{1w}+\chi_{2w}=\chi_w$. We choose $k$ (and for $k=1$
we choose single or multiple loops), and consider rewiring classes
depending on our choice.

\begin{description}
\item{Case $k=1$, single loops.}
Given a path $w$, let $\bar w$ denote the path in which all
self-loops whose vertex is visited at most $\ell$ times
(not counting consecutive visits) have been erased. Let $w\equiv w'$ if
$\bar w=\bar w'$. (``Not counting consecutive visits'' means that visits
to $v$ that are at consecutive times count as a single
visit.)

\item{Case $k=1$, multiple loops.} Two paths are
equivalent if for all times $i$ the vertices satisfy
$w_i=w'_i$, and $w$ and $w'$ agree except at times when
they traverse multiple self-loops.

\item{Case $k\ge 2$.} The paths $w$ and $w'$ are
equivalent if for all $0\le j\le n-1$ the following holds
\begin{itemize}
\item If $w_{jk}\neq w_{jk+k}$ then the path
segment between these times of $w$ and $w'$ is equal.
\item If $w_{jk}=w_{jk+k}$ and the path segment between these
times of $w$ is trivial, then it equals the
corresponding path segment in  $w'$.
\item If $w_{jk}=w_{jk+k}$ and the path segment between these
times of $w$ is nontrivial then it either equals
the corresponding path segment in $w$ or is the
time-reversal of that. We call $jk$ a {\bf proper cycle
time} of $w$, and the corresponding path segment a {\bf proper cycle} of
$w$.
\end{itemize}
This is illustrated in the example depicted in figure \ref{fig:equivalent_cycles}. 
\end{description}

For $w\in W$ let $[w]$ denote the equivalence class of $w$, called rewiring class. Note that the rewiring defined here is more complex
than the one in Section \ref{bm}.
For $w\in \mathcal{N}$ let $p(w)$ denote the probability
that a uniform random element of $[w]$ is nullhomotopic.

Then we have
\begin{equation*}
\left\vert W\right\vert =\dsum\limits_{A\text{ is a rewiring class}%
}\left\vert A\right\vert \geq \dsum\limits_{\substack{
A\text{ is a rewiring}
\\ \text{class, }A\cap \mathcal{N\neq \varnothing }}}\left\vert A\right\vert
=\dsum\limits_{w\in \mathcal{N}}\frac{\left\vert
[w]\right\vert }{\left\vert [w]\cap \mathcal{N}\right\vert
}=\dsum\limits_{w\in \mathcal{N}}p(w)^{-1}.
\end{equation*}%
What remains is to show that for all $w\in \mathcal{N}$ we
have
\begin{equation*}
p(w)\leq 14\exp(-c_{k} \chi_w/\ell).
\end{equation*}
We will do this case by case.

\smallskip

\noindent {\bf Case $k=1$, single loops.} We call a vertex
with a single loop (and its loop) {\bf reclusive} for $w$, if $w$
visits it at most $\ell$ times (not counting consecutive visits).
Whether a vertex is reclusive or not depends only on $[w]$.

Let $\tau_i,\, i=1,\ldots,\kappa$ denote the times when $\bar w$ visits a
reclusive vertex, and let $\phi$ be the number of loops erased
from $w$ to get $\bar w$. Then an element of $[w]$ is
determined by $X_1,\ldots X_\kappa$, the number of loops
inserted into $\bar w$ at times $\tau_1,\ldots,\tau_\kappa$. A
uniform random element of $[w]$ corresponds to a uniform
random choice of the $X_i$ so that their sum is $\phi$. Let
$\tr w$ denote the function that assigns to every reclusive
loop of $[w]$ the number of times modulo $2$ that $w$
passes through it. Then
$$
p(w)\le \pr(\tr w=0),
$$
where the right hand side refers to a random element of $[w]$. This is exactly the probability that for each reclusive
vertex the sum of the $X_i$ corresponding to that vertex is
even. By Lemma \ref{l:combinatorial} this is at most
$
14\exp \left(-\min(m,
 \phi/\ell)/14\right)
$, where $m$ is the number of different reclusive vertices
visited. Note that $m\ge \chi_{1w}/\ell$ and $\phi\ge \chi_{1w}$, so
we get the bound $ 14\exp
\left(-\frac{\chi_{1w}}{14\ell}\right). $

\smallskip

\noindent {\bf Case $k=1$, multiple loops.} We call a vertex {\bf important}
if it has a loop traversed by $w$. Further, we call a
loop important if its vertex is important (even if not traversed by $w$).
Note that
the set of important loops (or vertices) only depends on
the equivalence class of $w$.

For a path, let $\tr$ denote the function that assigns to
each important loop the number of times modulo $2$ that it
is traversed. Consider a random element $w$ of $[w]$. For
each important vertex $v$ with $k_v$ loops, let $\bar X_v=(X_{v,1},\ldots,
X_{v,k_v})$ record the number of times $w$ visits its
loops. Note $\bar X_v$ are independent as $v$ varies, and
each have a multinomial distribution with probabilities
$1/k_v$ for each option; each traversal is assigned to one of the loops uniformly at random.

Conditioning on the assignment of all traversals except for the last one, we
see that the probability that $X_{v,1},\ldots, X_{v,k_v}$
are all even numbers is at most $1/k_v\le 1/2$. So if
$i$ is the number of important vertices visited at most $\ell$ times, then we have
$i\ge\chi_{2w}/\ell$ and
$$
p(w)\le \pr(\tr w = 0) \le 2^{-i}\le 2^{-\chi_{2w}/\ell}.
$$

\smallskip

\noindent {\bf Case $k\ge 2$.} For a path, let $\tr$ denote the antisymmetric edge
function that sums $1$ over all forward steps of a path and
$-1$ over all backward steps (here ignoring self-loops).
Note that the trace of a random element $w$ in $[w]$ can be
written as
\begin{equation} \label{e:tr}
 \tr w = \tr \bar w + \sum_{\text{proper cycles $c$ of
$w$}} X_c\; \tr c
\end{equation}
where the $X_c$ are independent random variables uniform on
$\{-1,1\}$, and $\bar w$ denotes $w$ with all its proper
cycles removed. We claim that
$$
p(w) \le \pr(\tr w =0) \le 2^{-|w|_o}
$$
where $|w|_o$ is the maximum size of a subset of linearly
independent proper cycles of $w$. Indeed, consider such a
set $\mathcal C$, and complete it to a basis for
antisymmetric edge functions. Fix all values of $X_c$ for
$c\notin \mathcal C$. Then for $c\in \mathcal C$, looking
at the a $c$-coordinate of the equation \eqref{e:tr}, we
see that it can hold only if $X_c$ equals some fixed value,
which has probability $1/2$ or $0$, independently over the
coordinates. The claim follows.

\begin{figure}[h!]
\begin{center}
\includegraphics[width=5cm]{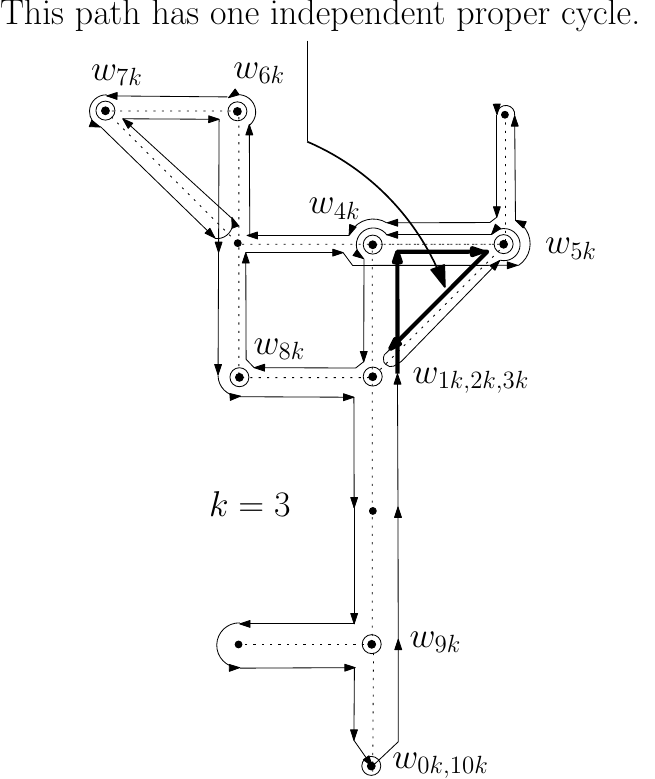}
\end{center}
\caption{A null cycle $\omega \in \mathcal{N}_{30}$ with two proper cycles of length $k=3$. These are opposite to each other and in particular dependent. Changing the direction in one of them gives raise to a nontrivial cycle equivalent to $\omega$. }
\label{fig:equivalent_cycles}
\end{figure}

Our next step is to bound the number of independent cycles.
Fix a $j_0$, and we consider the set $J$ of
indices $j$ so that the $\chi(w,jk,k)=
\chi(w,j_0k,k)=1$, and the cycles of $w$ at $jk$ and $j_0k$ share an edge. For a vertex $v$ let $J(v)$ denote the number of $j\in J$ so that $w_{jk}=v$.
Since for  $j\in J$ the vertex $w_{jk}$ is visited at most $\ell$ times, we have $J(v)\le \ell$.
If two $k$-cycles
share an edge, then a vertex on one and a vertex on the other are of distance at most $k-1$ from each other. Thus we have
$$
|J|=\sum_{v\in B(w_{j_0k},k-1)} J(v)
 \le \ell |B(w_{j_0k},k-1)|\le \ell d(d-1)^{k-2},
$$
where $B(v,r)$ is the ball of radius $r$ about $v$.
 This means that
the dependency graph of cycles has degree at most $d(d-1)^{k-2}\ell$ and size
$\chi_w$, and therefore contains an independent set of size
$\chi_w/(d(d-1)^{k-2}\ell+1)$. So we get $p(w)\le
2^{-\chi_w/(d(d-1)^{k-2}\ell+1)}\le e^{-\chi_w/(2(d-1)^k\ell)}$.

\bigskip

Now we have either $\chi_{1w}\ge  \frac78\chi_w$ or $\chi_{2w}\ge \frac18\chi_w$.
In either case, we get
$$
p(w)\le 14\exp(-\chi_w/(16 \ell)).
$$
Together with the $k\ge 2$ case, this completes the proof.
\end{proof}

The following simple probabilistic lemma was used in the proof of Theorem \ref{t:null and ordinary}.

\begin{lemma}\label{l:combinatorial}
Let  $X=(X_1, \ldots ,X_k)$ be a uniform random variable on
the set of $k$-tuples of nonnegative integers with even sum
$n\ge 2$.

(a) For any integer $k$-vector $x$ with $k\ge 2$ we have

\begin{eqnarray}
 \pr(X\equiv x \mod 2) &\le& \frac{\binom{n/2+k-1}{k-1}}
{\binom{n+k-1}{k-1}} \le
\exp\left(-\frac{1}{4/k+2/n}\right),
\end{eqnarray}
with equality at the first location if $x=0$.

(b) Consider a partition of $\{1\ldots k\}$ into $m$
nonempty parts
so that $k\le m\ell$ for some $\ell \ge 2$. Then with $\wedge$ denoting minimum, we have
$$\pr\left(
\sum_{i
 \in p} X_i \mbox{ is even for each part $p$}\right)\le 14\exp \left(-\frac{m\wedge
 (n/\ell)}{14}\right).
 $$
\end{lemma}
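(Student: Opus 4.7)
For part (a), the approach is pure stars-and-bars. For a parity pattern $x \in \Z_2^k$ of Hamming weight $s$, the substitution $X_i = x_i' + 2 Y_i$ with $x_i' \in \{0,1\}$ matching the parity of $x_i$ puts the $X$-tuples summing to $n$ with $X \equiv x \bmod 2$ in bijection with non-negative $Y$-tuples summing to $(n-s)/2$. There are $\binom{(n-s)/2+k-1}{k-1}$ of these, maximised at $s = 0$; dividing by $\binom{n+k-1}{k-1}$ gives the first inequality, with equality when $x \equiv 0$. For the exponential bound I would factor the ratio at $s = 0$ as $\prod_{j=1}^{k-1}(n/2+j)/(n+j)$, apply $1-t \leq e^{-t}$ termwise, and lower-bound $\sum_{j=1}^{k-1}(n/2)/(n+j)$ by $n(k-1)/(2(n+k-1))$ using the smallest summand; a short algebraic check shows this exceeds $1/(4/k+2/n)$ iff $k \geq 2$. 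The ``$\leq 1/2$'' claim holds for non-trivial parity patterns (Hamming weight $\geq 2$) by direct comparison.

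For part (b), identify the event $\{Y_p$ even for all $p\}$ with $\{X \bmod 2 \in L\}$, where $L \leq \Z_2^k$ is the subgroup of index $2^m$ cut out by the $m$ parity constraints, and write $P = \sum_{x \in L}\pr(X \equiv x \bmod 2)$; each summand is controlled by part (a). The Hamming-weight distribution of $L$ has generating function $N(w) = \prod_{k_p \geq 2}\tfrac12\bigl((1+w)^{k_p}+(1-w)^{k_p}\bigr)$, satisfying $N(0) = 1$ and $N(1) = 2^{k-m}$. Since the target bound $14\exp(-(m \wedge n/\ell)/14)$ is the larger of $14 e^{-m/14}$ and $14 e^{-n/(14\ell)}$, it suffices to establish each in its natural regime. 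When $m \geq n/\ell$, combine the uniform estimate $\pr(X \equiv x) \leq \exp(-kn/(4n+2k))$ from part (a) with $|L| \leq 2^{m(\ell-1)}$ (from $k \leq m\ell$) and reduce algebraically to $P \leq 14 e^{-n/(14\ell)}$. When $m \leq n/\ell$, the naive $|L|$-counting loses an exponential factor in $m$, so one must instead use the decay of $\pr(X \equiv x)$ in $s(x)$ coming from the product formula of part (a), combined with $N$ evaluated at a suitable $w \in (0,1)$, to recover the $\sim 2^{1-m}$ scale and conclude $P \leq 14 e^{-m/14}$.

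The main obstacle will be the large-$n$ regime: one must genuinely exploit that most elements of $L$ have Hamming weight of order $k - m$, on which $\pr(X \equiv x \bmod 2)$ is already substantially smaller than its maximum value at $x = 0$, in order to beat the crude $|L|$ factor. The generous constant $14$ in the statement absorbs all small-parameter edge cases, since $14 \exp(-(m \wedge n/\ell)/14) \geq 1$ whenever $m \wedge n/\ell \leq 14 \ln 14 \approx 37$, a regime in which $P \leq 1$ already suffices.
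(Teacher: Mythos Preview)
Your argument for part~(a) is correct and in fact slightly cleaner than the paper's: the paper routes through geometric random variables conditioned on their sum, while you go directly via stars-and-bars, but both arrive at the same formula $\pr(X\equiv x)=\binom{(n-s)/2+k-1}{k-1}\big/\binom{n+k-1}{k-1}$ and the same termwise exponential bound.

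Part~(b), however, has a real gap. Your plan for the regime $m\ge n/\ell$ is to combine the uniform estimate $\pr(X\equiv x)\le\exp\bigl(-kn/(4n+2k)\bigr)$ with $|L|\le 2^{m(\ell-1)}$. This does not close. Take $\ell=2$, $m=n/2$, $k=m\ell=n$ (every part of size~$2$): then $|L|=2^{n/2}$ while $kn/(4n+2k)=n/6$, so your bound is $2^{n/2}e^{-n/6}=e^{(\,\tfrac12\ln 2-\tfrac16)\,n}\approx e^{0.18\,n}\to\infty$, whereas the target is $14\,e^{-n/28}\to 0$. The same blow-up occurs strictly inside the regime $m>n/\ell$ whenever $k$ is near $m\ell$. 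So the case you label as the easy one is in fact just as delicate as the other, and the generating-function idea you reserve for $m\le n/\ell$ would have to be deployed here too --- but you have not carried that out, and it is not obvious how to do so, since the decay of $\pr(X\equiv x)$ in the Hamming weight $s$ is not geometric (the ratio at consecutive even weights is $\frac{(n-s)/2}{(n-s)/2+k-1}$, which varies with $s$).

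The paper sidesteps the whole $|L|$-counting issue by a conditioning trick: pick a transversal $M$ (one index per part), condition on $X_{M'}=(X_i)_{i\notin M}$, and observe that given $X_{M'}$ the vector $X_M$ is uniform on $m$-tuples with sum $S=\sum_{i\in M}X_i$, so part~(a) applies \emph{with $m$ variables} and gives $\pr(\bar X\equiv 0\mid X_{M'})\le\exp\bigl(-1/(4/m+2/S)\bigr)$. The remaining work is a concentration estimate showing $\pr\bigl(S\le\tfrac12(m\wedge n/\ell)\bigr)$ is exponentially small, which the paper does by an explicit ratio analysis of the discrete beta distribution of $S$. This reduction to an $m$-variable problem is exactly what avoids the exponential loss $2^{k-m}$ that sinks your approach.
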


\begin{proof}

(a) (We thank P. Csikv\'ary for this simplification of our previous proof.)
To count the number of $k$ tuples that are equal to $x$ mod $2$, we subtract 1 from each odd entry and divide each resulting entry by 2. We get a bijection between such $k$-tuples and the number of $k$-tuples with entry sum $(n-o)/2$, where $o$ is the number of odd entries of $x$. Thus
\begin{eqnarray*}
 \pr(X\equiv x \mod 2) &=& \frac{\binom{(n-o)/2+k-1}{k-1}}
{\binom{n+k-1}{k-1}} \le \frac{\binom{n/2+k-1}{k-1}}
{\binom{n+k-1}{k-1}}.
\end{eqnarray*}
This shows the first inequality. For the second, note that
the right hand side equals
$$
\frac{n/2+1}{n+1}\;\frac{n/2+2}{n+2}\cdots
\frac{n/2+k-1}{n+k-1},
$$
Each factor is at most $
1-\frac{n/2}{n+k-1}$, giving a bound of
$$
\exp\left(-\frac{(k-1)n/2}{n+k-1}\right)\le
\exp\left(-\frac{1/2}{2/k+1/n}\right).
$$
The last inequality holds for $k\ge 2$.

 (b) Let $\bar X$ denote the
vector formed by the sums of the entries of $X$ over the
parts of our partition. Let $M\subset\{1,\ldots,k\}$ be a
subset of indices, one in each part, and let $M'$ be its
complement. Let $S=\sum_{i\in M} X_i$. Then
$$\ev S=\sum_{i\in M} \ev X_i=m\frac nk\ge \frac n\ell.$$ We first bound the probability
that $S$ is exceptionally small, namely that it is at most
$(k\wedge n)/(4\ell)$. $S$ has a discrete
beta distribution. By a standard construction, $S+m$ can be
realized as the time of the $m$th black sample when
sampling without replacement from $n$ white and $k-1$ black
balls. From this we get $$ p_s=P(S=s) =
\frac{\binom{s+m-1}{m-1}\binom{(n-s)+(k-m)-1}{(k-m)-1}}
{\binom{n+k-1}{k-1}}.
$$
We compute the ratio of these probabilities for two
consecutive values of $s$
\begin{equation}\label{e:probratio} \frac{p_s}{p_{s+1}}
=\frac{(s+1) (k-m+n-s-1)}{(m+s) (n-s)} \le \frac{s
(k-m+n-s)}{(m+s) (n-s)}+ \frac{k-m+n-s}{(m+s) (n-s)}.
\end{equation}
Assume that $s\le s_0=(m/2)\wedge (n/(2\ell))$. We first bound the
second term in \eqref{e:probratio}, which equals
$$
\frac{k-m}{m+s}\; \frac{1}{n-s} + \frac{1}{m+s} \le \ell
\frac{1}{n/2} + \frac{1}{m} \le \frac{3}{m\wedge (n/\ell)}
$$
since $n/2\le n-s$ and $k\le m\ell$. The first term in
\eqref{e:probratio} is increasing in $k$ so we substitute the smallest possible value
$k=m\ell$ to get the upper bound
$$
1-\frac{m (n-\ell s)}{(n-s) (m+ s)} \le 1- \frac{m
n/2}{n(m+m/2)} =\frac23.
$$
Thus when
\begin{equation}\label{e:omitted}
3 /(m\wedge (n/\ell))\le 1/12
\end{equation}
 the whole expression in \eqref{e:probratio} is
bounded above by $3/4$. Now note that from $s=s_0$ down the probability of $S=s$ decreases by at
least a factor of $3/4$. So
\begin{eqnarray*}\pr(S\le
(k\wedge n)/4\ell) &=&\sum_{s=0}^{s_0/2} p_s
\le \sum_{i=s_0/2}^\infty p_{(k\wedge
n)/2\ell}\left(\frac34\right)^{i} \\&\le& 4
\left(\frac{3}{4}\right)^{s_0/2}\le
4\exp \left(-\frac{m\wedge (n/\ell)}{14}\right).
\end{eqnarray*}
Condition on the random variables in $X_M'=(X_i,i\in M')$.
Given this information the random variable $X_M=(X_i,i\in
M)$ is uniform on the set of $k$-tuples of nonnegative
integers with sum $S$.
$$
\pr(\bar X = 0 \mod 2) = \ev [\pr( \bar X_M = \bar
X_{M'}\mod 2 \, |\, X_{M'})] \le \ev [\pr(\bar X_M = 0 \mod 2
\,|\, X_{M'})].
$$
The inequality follows from part (a). The last conditional
probability depends only on the value of $S$. Using part
$(a)$ we can break the expression up with $s=s_0/2$ as
\begin{align*}
\pr (S<s) &+ \ev \left[\ev\Big(\one(X_m=0 \mod 2) \one (S\ge
s)\,|\,S \Big)\right] \\&\le 4\exp \left(-\frac{m\wedge( n/\ell)}{14}\right) + \exp\left(-\frac{1}{4/k+2/s}\right)
\le 5\exp \left(-\frac{m\wedge( n/\ell)}{14}\right).
 \end{align*}
We increase the prefactor $5$ to $14$ in order to get a
trivial bound when \eqref{e:omitted} fails.
\end{proof}

\section{Explicit bounds on the spectral radius}
\label{s:explicit bounds on rho}

\subsection{A preliminary bound on the return probability}

\begin{proposition}\label{p:main for graphs}
Let $G$ be a $d$-regular unimodular random graph and let $k,\ell>0$. Then
with
$$c_k= \begin{cases}1/16&\mbox{for } k=1 \\ (d-1)^{-k}/2 & \mbox{for }k\ge 2 \end{cases}
$$ we have
 \begin{equation*}
 \ev \log \left\vert W_{nk}\right\vert
 \geq
 \log \left\vert \mathcal{N}_{nk}\right\vert-3 +\frac{c_kn}{\ell}\frac{1}{\left\vert \mathcal{N}_{nk}\right\vert }%
 \dsum\limits_{w\in \mathcal{N}_{nk}}
 \ev \chi_\ell (w,0,k).
 \end{equation*}%
\end{proposition}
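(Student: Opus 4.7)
My plan is to apply Theorem \ref{t:null and ordinary} pathwise to the random graph, take logarithms, pass expectations through Jensen's inequality, and then remove the $j$-dependence via the Mass Transport Principle.

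For each realization of $G$ rooted at $o$, Theorem \ref{t:null and ordinary} yields
\begin{equation*}
|W_{nk}(o,o)| \;\geq\; \frac{1}{14}\sum_{w\in\mathcal{N}_{nk}}\exp(c_k\chi_w/\ell).
\end{equation*}
Nullcycles of length $nk$ based at a vertex of a $d$-regular graph are in bijection with random walk bridges of length $nk$ in $\T_d$, so $|\mathcal{N}_{nk}|$ is deterministic (depending only on $d$ and $nk$). Taking logarithms, factoring out $|\mathcal{N}_{nk}|$, and applying Jensen's inequality $\frac{1}{N}\sum e^{a_w}\ge e^{\frac{1}{N}\sum a_w}$ gives
\begin{equation*}
\log|W_{nk}|\;\geq\;\log|\mathcal{N}_{nk}|\,-\,\log 14\,+\,\frac{c_k}{\ell\,|\mathcal{N}_{nk}|}\sum_{w\in\mathcal{N}_{nk}}\chi_w.
\end{equation*}
Expanding $\chi_w=\sum_{j=0}^{n-1}\chi_\ell(w,jk,k)$ and taking $\ev$ reduces the proposition (via $\log 14<3$) to the stationarity identity
\begin{equation*}
\ev\sum_{w\in\mathcal{N}_{nk}}\chi_\ell(w,jk,k)\;=\;\ev\sum_{w\in\mathcal{N}_{nk}}\chi_\ell(w,0,k),\qquad 0\le j\le n-1.
\end{equation*}

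This stationarity is the only place where unimodularity enters, and I will establish it via the Mass Transport Principle. Define a transport by letting $T(G,x,y)$ be the number of nullcycles $w$ of length $nk$ in $G$ with $w_0=x$, $w_{jk}=y$, for which the segment $w_{jk},\ldots,w_{jk+k}$ is a nontrivial $k$-cycle and $y$ is visited at most $\ell$ times by $w$. Then $\sum_y T(G,o,y)$ is exactly $\sum_{w\in\mathcal{N}_{nk}}\chi_\ell(w,jk,k)$. The reverse sum $\sum_x T(G,x,o)$ counts nullcycles that pass through $o$ at time $jk$; applying the cyclic shift $w\mapsto w'$, $w'_i=w_{(i+jk)\bmod nk}$, bijects these with nullcycles in $\mathcal{N}_{nk}$ whose initial $k$-segment is a nontrivial cycle and whose base $o$ is visited at most $\ell$ times, so $\sum_x T(G,x,o)=\sum_{w'\in\mathcal{N}_{nk}}\chi_\ell(w',0,k)$. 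Mass Transport then equates the two expectations.

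The principal subtle point is that the cyclic shift preserves nullhomotopy. Writing $w=\alpha\cdot\beta$ with $\alpha$ the length-$jk$ prefix, the shifted walk $w'=\beta\cdot\alpha$ represents the conjugate $\alpha^{-1}w\alpha$ under change of basepoint from $o$ to $w_{jk}$, hence is trivial in $\pi_1(G,w_{jk})$ iff $w$ is trivial in $\pi_1(G,o)$. The indicator $\chi_\ell$ is clearly preserved, since both the multiset of vertices visited and the relevant $k$-segment are unchanged. With stationarity in hand, summing over the $n$ values of $j$ produces the advertised factor of $n$ and yields the claimed lower bound on $\ev\log|W_{nk}|$.
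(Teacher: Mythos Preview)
Your proof is correct and follows essentially the same route as the paper: apply Theorem~\ref{t:null and ordinary}, use Jensen (equivalently, AM--GM) to pass the logarithm inside the sum over $\mathcal{N}_{nk}$, and then invoke Mass Transport to show the $j$-independence of $\ev\sum_{w}\chi_\ell(w,jk,k)$. Your explicit verification that cyclic shifts preserve nullhomotopy (via conjugation in $\pi_1$) and your remark that $|\mathcal{N}_{nk}|$ is deterministic make steps transparent that the paper leaves implicit, but the argument is the same.
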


\begin{proof} By Theorem \ref{t:null and ordinary} and the inequality of
arithmetic and geometric means we have
 \begin{eqnarray*}
\left\vert W_{nk}\right\vert &\geq& e^{-3}\sum_{ w\in \mathcal{N}_{nk}}
\exp\left(c_k\sum_{j=0}^{n-1} \chi (w,jk,k)/\ell\right)
 \\&\ge&
e^{-3}\left\vert \mathcal{N}_{nk}\right\vert \left(
\prod\limits_{w\in \mathcal{N}%
_{nk}}
\prod_{j=0}^{n-1} \exp\left(c_k \;\chi (w,jk,k)/\ell\right)
\right) ^{%
\frac{1}{\left\vert \mathcal{N}_{nk}\right\vert }}.
 \end{eqnarray*}
Taking logarithm of both sides gives us
\begin{equation*}
 \log \left\vert W_{nk}\right\vert -
 \log \left\vert \mathcal{N}_{nk}\right\vert
 \geq -3+\frac{c_k}{\ell\left\vert \mathcal{N}_{nk}\right\vert }%
\dsum\limits_{w\in \mathcal{N}_{nk}}\dsum\limits_{j=0}^{n-1}
\chi (w,jk,k).\end{equation*}%
Taking expected value of both sides over the random graph we get
\begin{equation*}
 \ev \log \left\vert W_{nk}\right\vert -
 \log \left\vert \mathcal{N}_{nk}\right\vert
 \geq -3+\frac{c_k}{\ell \left\vert \mathcal{N}_{nk}\right\vert }%
\dsum\limits_{w\in \mathcal{N}_{nk}}\dsum\limits_{j=0}^{n-1}
\ev \chi (w,jk,k).\end{equation*}%
We will use the Mass Transport Principle to show that the expression
\begin{equation} \label{av_Echi}
\dsum\limits_{w\in \mathcal{N}_{nk}}
\ev \chi (w,jk,k).
\end{equation}%
does not depend on the position $j$. Let the mass transport be defined as
\begin{equation*}
f(G,x,y)=\dsum\limits_{w\in \mathcal{N}_{nk}(x)}\mathbf{1}(w_{(n-j)k}=y)\chi (w,0,k)=\dsum\limits_{w\in \mathcal{N}_{nk}(y)}\mathbf{1}%
(w_{jk}=x)\chi (w,jk,k)
\end{equation*}
That is, for every nullhomotopic path $w$ starting at $x$, $x$ sends mass $%
\chi (w,0,k)$ to the $(n-j)k$-th position of $w$. The second
equality follows by rooting the path at $y$ instead of $x$. Trivially, the
mass transport does not depend on the root of $G,$ so the Mass Transport
Principle gives us
\begin{equation*}
\dsum\limits_{y\in V(G)}\mathbf{E}f(G,o,y)=\dsum\limits_{x\in V(G)}\mathbf{E}%
f(G,x,o)
\end{equation*}%
that is, the expected mass sent from the root equals the expected mass
received by the root. Plugging in the corresponding equations, we get
\begin{equation*}
\dsum\limits_{w\in \mathcal{N}_{nk}(o)}\mathbf{E}\chi (w,0,k)=\dsum\limits_{w\in \mathcal{N}_{nk}(o)}\mathbf{E}\chi(w,jk,k)
\end{equation*}%
and we get that the expression (\ref{av_Echi}) does not depend on $j$. This proves the theorem.
\end{proof}

\begin{lemma}\label{l:density to visits}
Let $(G,o)$ be a $d$-regular rooted graph with $\rho(G)\le 19/20$. Let $\gamma_k(G,o)$ be the number of nontrivial cycles of length $k$ starting at the root $o$.
Let $n$ satisfy $2k\le n \le \sqrt{|G|}$ and let $\ell=6\cdot 10^8 (4d-4)^{k}$. Then we have
$$
\frac{1}{\left\vert \mathcal{N}_{n}\right\vert }%
 \dsum\limits_{w\in \mathcal{N}_{n}}\chi_\ell(w,0,k) \ge \frac{\gamma_k(G,o) }{30(4d-4)^{k}} .
$$
\end{lemma}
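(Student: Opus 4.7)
I would decouple the two conditions in $\chi_\ell(w,0,k)$. Write $\chi^*(w) = \mathbf 1(w_0 w_1\cdots w_k$ is a nontrivial $k$-cycle$)$ and let $L_o(w)$ denote the number of visits of $w$ to $o$. Then $\chi_\ell(w,0,k) = \chi^*(w)\,\mathbf 1(L_o(w)\le\ell) \ge \chi^*(w) - \mathbf 1(L_o(w)>\ell)$, so after averaging over uniform $w \in \mathcal N_n$ it suffices to prove
\begin{equation*}
\ev\chi^* \ge \frac{\gamma_k(G)}{15(4d-4)^k} \qquad\text{and}\qquad \pr(L_o > \ell) \le \frac{1}{30(4d-4)^k}.
\end{equation*}
Together these yield $\ev\chi_\ell \ge (2\gamma_k(G)-1)/(30(4d-4)^k) \ge \gamma_k(G)/(30(4d-4)^k)$ when $\gamma_k(G)\ge 1$, and the case $\gamma_k(G)=0$ is trivial. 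For the tail bound I would apply Proposition \ref{p:visits} with $A=\{o\}$: under $\rho(G)\le 19/20$ (and $n^2 \le |G|$ in the finite case) this gives $\ev L_o \le 2\cdot 10^7$, and Markov's inequality with $\ell = 6\cdot 10^8(4d-4)^k$ yields the claim.

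For the main bound, I would lift $w$ to a uniform random walk bridge $\tilde w$ of length $n$ in $\T_d$ from $\tilde o$ to itself. Via the covering map $\pi:\T_d\to G$, nontrivial $k$-cycles at $o$ in $G$ biject with $k$-step walks in $\T_d$ from $\tilde o$ to a subset $V \subset \pi^{-1}(o)\setminus\{\tilde o\}$. Writing $N_m(v)$ for the number of $m$-step walks $\tilde o\to v$ in $\T_d$ and $N_n^{(0)} = |\mathcal N_n|$, the standard path-counting identity for bridges gives
\begin{equation*}
\ev\chi^* = \sum_{v\in V}\frac{N_k(v)\,N_{n-k}(v)}{N_n^{(0)}}, \qquad \gamma_k(G) = \sum_{v\in V} N_k(v).
\end{equation*}
It thus suffices to show $N_{n-k}(v)/N_n^{(0)} \ge 1/(15(4d-4)^k)$ for every $v \in V$.

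Fix such a $v$; it satisfies $1\le |v|\le k$ (with $|v|\equiv k\pmod 2$). An explicit $k$-walk $\tilde o\to v$ (go straight in $|v|$ steps, then do $(k-|v|)/2$ backtracks on a single adjacent edge of $v$) exists, so $p_k(\tilde o,v)\ge d^{-k}$. Splitting via Chapman--Kolmogorov at time $n-2k$ yields
\begin{equation*}
p_{n-k}(\tilde o,v) \;\ge\; p_{n-2k}(\tilde o,\tilde o)\, p_k(\tilde o,v) \;\ge\; d^{-k}\, r_{n-2k},
\end{equation*}
where $r_m$ is the $m$-step return probability on $\T_d$. Since nullcycles have even length and $n-2k\ge 2$, Lemma \ref{l:returns} applies and produces $r_{n-2k}/r_n \ge (2/3)\rho^{-2k}/10 = \rho^{-2k}/15$. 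Using $(d\rho)^2 = 4(d-1)$, and hence $\rho^{-2k} = d^{2k}/(4d-4)^k$, one concludes
\begin{equation*}
\frac{N_{n-k}(v)}{N_n^{(0)}} = \frac{p_{n-k}(\tilde o,v)}{d^k r_n} \;\ge\; \frac{r_{n-2k}}{d^{2k}\, r_n} \;\ge\; \frac{1}{15(d\rho)^{2k}} = \frac{1}{15(4d-4)^k}.
\end{equation*}
The main obstacle is constant bookkeeping: the step $p_k(\tilde o,v)\ge d^{-k}$ is crude (most $k$-walks end far from $v$), but the explicit constants in Lemma \ref{l:returns} leave exactly the right amount of slack to absorb it.
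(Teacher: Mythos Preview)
Your proof is correct and follows essentially the same route as the paper: the bound $\ev\chi^*\ge \gamma_k(G)/(15(4d-4)^k)$ is exactly the paper's ``retrace'' argument (your Chapman--Kolmogorov step with the explicit $k$-walk to $v$ is the tree-lift formulation of forcing the nullcycle to traverse a given $k$-path and then backtrack), and the visit control uses Proposition~\ref{p:visits} with Markov's inequality just as the paper does. The only stylistic difference is that you decouple via $\chi_\ell\ge\chi^*-\mathbf 1(L_o>\ell)$ and apply Markov unconditionally, whereas the paper conditions on the event $A=\{\chi^*=1\}$ and bounds $\pr(V_o>\ell\mid A)\le 1/2$; both lead to the same constant and the same value of~$\ell$.
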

\begin{proof}
We may assume $\gamma_k(G,o)\ge 1$, otherwise the claim is trivial.
In this Lemma $G$ is fixed, so the probabilistic language for nullcycles will not cause confusion.
So let $w$ be a uniform random element of $\mathcal N_n$.

The probability that a random cycle of length $n$ in $\T_d$ traverses a
specific path for its first $k$ steps can be bounded below easily by requiring the path to retrace its steps in the following $k$ times.
If $r_n$ is the return probability of simple random walk in $\T_d$, then
the total number of paths that do this is given by
$
r_{n-2k}d^{n-2k},
$
so the probability is at least
$$
\frac{r_{n-2k}d^{n-2k}}{r_nd^n} \ge \frac1{15} (d\rho(\T_d))^{-2k}= \frac1{15}(4d-4)^{-k}=:p,
$$
and the inequality uses both sides of Lemma \ref{l:returns} (but one side in the special case $n=2k$).
So if $G$ has $\gamma_k(G,o)$ cycles of length $k$ at $o$, then
the event $A$ that $w$ passes through one of them in the first $k$
steps satisfies $\pr A\ge p\gamma_k(G,o)$. Let $V_o$ be the number
of times the random nullcycle $w$ traverses $o$. By Proposition \ref{p:visits}
we have
$$
\ev V_o\le  2\cdot 10^{7}=c, \qquad \ev(V_o|A)\le \frac{\ev V_o}{\pr A}\le \frac{c}{p\gamma_k(G,o)}.
$$
By Markov's inequality with $\ell= 2c/p$
$$
\pr(V_o\ge\ell\,|\,
A)\le \frac{\ev(V_o|A)}{\ell} \le \frac{1}{2\gamma_k(G,o)}\le \frac12.
$$
This implies (using probabilistic notation for averaging over $\mathcal{N}_{n}$)
$$
\ev \chi(w,0,k) = \pr(A,V_o\le \ell)=\pr(A) - \pr(V_o> \ell|A)\pr(A) \ge \frac{\pr(A)}2 \ge \frac{p\gamma_k(G,o)}2
$$
as claimed.
\end{proof}

\subsection{Main bounds on spectral radius}

The following theorem implies Theorems \ref{vegesbecsles} and \ref{unimod}.

\begin{theorem}[Main results]\label{t:main}
Let $(G,o)$ be a  $d$-regular unimodular random graph and let $k\ge 1$. Let $\gamma_k(G,o)$ be the number of nontrivial cycles of length $k$ starting at $o$. Let
$$
\nu_k=2\cdot 10^{11} 2^{4k}(d-1)^{3k}k.
$$
For $G$ {\bf infinite} a.s.\ we have
\begin{eqnarray}
 \label{e:c main rho infinite}
\mathbf{E}\log \rho (G)&\geq &\log \rho (\T_{d})+\frac{1}{\nu_k}\ev \gamma_k(G,o).
\end{eqnarray}
For $G$ infinite and ergodic, we have $\rho(G) \ge \rho(\T_d) e^{\ev \gamma_k(G,o)/\nu_k}$.

\noindent Let $G$ be a {\bf finite} connected $d$-regular graph with $|G|\ge d^7$. Then for the root $o$ chosen uniformly at random we have
\begin{equation}\label{e:c main finite rho}
\frac{\rho(G)}{\rho(\T_d)} \ge 1+\frac1{\nu_k} \ev \gamma_k(G,o) -
\frac{\frac 32 \log \log_{d-1} |G| +6}{\log_{d-1} |G|}.
\end{equation}
In particular, for finite Ramanujan graphs with $|G|\ge d^7$ we have

\begin{equation}\label{e:c main Rama}
\ev \gamma_k(G,o) \le \nu_k
\frac{\frac 32 \log \log_{d-1} |G| +6}{\log_{d-1} |G|}.
\end{equation}
\end{theorem}

\begin{proof}
Let  $nk$ be  even and  $n\ge 1$. First assume that  $G$ {which may be finite or infinite} satisfies $\pr(|G|\ge (nk)^2)=1$.
We will use Lemma \ref{l:density to visits}, which requires $\rho(G)\le 19/20$. We first take care of the other case. For \eqref{e:c main rho infinite} and \eqref{e:c main finite rho} we need tho show for every such $G$ we have
$$
\log\rho(G)\ge \log \rho(\T_d) + \gamma_k(G)/\nu_k.
$$
Since $\gamma_k(G)\le d^k$, this inequality follows from
$$\rho(G)\ge 19/20,\qquad \rho(\T_d) \le \rho (\T_3)=2\sqrt{2}/3,\qquad \gamma_k(G)/\nu_k \le \tfrac{1}{2\cdot 10^{11}}.$$

Now assume $\rho(G)\le 19/20$. By Proposition \ref{p:main for graphs} and Lemma \ref{l:density to visits}
for $\ell=6\cdot 10^{8} (4d-4)^{k}$, $n\ge 1$ with $c_1=1/16x$ and  $c_k=(d-1)^{-k}/2$ for $k\ge 2$ we have
 \begin{equation}\label{e:limrho}
 \ev \log \left\vert W_{nk}\right\vert
 \geq
 \log \left\vert \mathcal{N}_{nk}\right\vert-3 +\frac{c_k}{\ell}n\frac{\ev \gamma_k(G,o) }{30(4d-4)^{k}}
 \end{equation}%
where
$$
\frac{c_k}{30(4d-4)^{k}\ell k} \ge \frac{1}{\nu_k}.
$$
For the first claim \eqref{e:c main finite rho},
we divide \eqref{e:limrho} by $nk$ and use the bounded convergence theorem. The second claim \eqref{e:c main rho infinite} follows from the fact that for $G$ ergodic $\rho(G)$ is constant.

The bound on $\mathcal N_{n}$ of Lemma \ref{l:returns} now shows that
 \begin{eqnarray}\label{e:c main returns}
  \ev \log p_{nk}(o,o)
 &\geq&
 nk\log \rho(\T_d) - \frac32 \log (nk) - 4 +\frac{nk}{\nu_k}{\ev \gamma_k(G,o) }.
\end{eqnarray}

For $G$ finite and $d\ge 3$ we have
\begin{equation}\label{e:prelimrho}
|G|\ge d^7 \qquad \Rightarrow\qquad \rho(G)\ge 1/(d-1)^{5/6},
\end{equation}
which follows from $\rho(G)^2+2/|G|\ge p_2(o,o)\ge 1/d$, a consequence of Lemma \ref{l:kicsi}.
Note that a lower bound on $|G|$ is needed for \eqref{e:prelimrho} since the complete graph with
loops has $|G|= d$  and $\rho(G)=0$.

Assume $|G|\ge d^7$, and $\log_{d-1}|G|\ge 10k$. Set $n = 2\lceil \frac{1}{2k} \log_{d-1} |G|\rceil $ so that
$$\rho(G)^{nk} \ge (d-1)^{-\frac{5}{6}nk} \ge (d-1)^{-\frac{5}{6}(\log_{d-1}|G|+2k)}\ge 1/|G|.
$$
(Here the power $5/6$ from \eqref{e:prelimrho} is used to offset the effect of $\lceil \cdot \rceil$, and thus yield a cleaner final bound).
By Lemmas \ref{l:returns} and \ref{l:kicsi}, the left hand side of \eqref{e:c main returns} is at most
$$
\log(\rho(G)^{nk}+2/|G|)=nk \log \rho(G)+\log \left(1+\frac2{|G|\rho(G)^{nk}}\right)\le nk\log \rho (G) +\log 3.
$$
We use this and divide \eqref{e:c main returns} by $nk$ and get the lower bound
\begin{equation}\label{e:mfrp}
\log \rho(G)\ge \log \rho(\T_d) +\frac1{\nu_k} \ev \gamma_k(G,o) -
\frac{\frac 32 \log \log_{d-1} |G| +4+\log 3}{\log_{d-1} |G|}.
\end{equation}
This proves \eqref{e:c main finite rho} for the case $\log_{d-1}|G|\ge 10k$.

The rest of the proof is standard and can be skipped. Its goal is to remove the restriction $\log_{d-1}|G|\ge 10k$. The same argument as above, using the trivial comparison with $\T_d$ for walks of length $2\lceil \frac12 \log_{d-1}|G|\rceil$ gives the (suboptimal) Alon-Boppana type bound
\begin{equation}\label{e:sab}
 \log \rho(G)\ge \log \rho(\T_d) +\frac{\frac 32 \log \log_{d-1} |G| +4+\log 3}{\log_{d-1} |G|} \end{equation}
as long as $\log_{d-1}|G|\ge 10$. For $d\ge 4$, \eqref{e:prelimrho} can be improved to
\begin{equation}
|G|\ge d^7 \qquad \Rightarrow\qquad \rho(G)\ge 1/(d-1)^{3/4},
\end{equation}
and this yields that \eqref{e:sab} holds as long as $\log_{d-1}|G|\ge 6$. So both for $d=3$ and $d\ge 4$ we get that \eqref{e:sab} holds as long as  $|G|\ge d^7$. Equation \eqref{e:sab} implies \eqref{e:c main finite rho} if
$$
\frac{\gamma_k(G,o)}{\nu_k} \le \frac {2-\log 3}{\log_{d-1} |G|}.
$$
With the trivial bound $\gamma_k(G,o)\le d(d-1)^{k-1}$, in the case $\log_{d-1}|G|\le 10k$ this is implied by
$$
\frac{d(d-1)^{k-1}}{\nu_k} \le \frac {2-\log 3}{10k},
$$
which holds trivially.
\end{proof}

\subsection{Bounds for graphs close to the Ramanujan threshold}

The following theorem implies Theorem \ref{essgirth}.

\begin{theorem}[Short cycles in Ramanujan graphs]
\label{t:ess girth r graphs}
Let $\alpha>0$, $d\ge 3$ and consider finite, connected $d$-regular graphs $G$ that are
close to Ramanujan in the sense that
$$
\rho(G)\le \rho(\T_d) + \frac{1}{(\log |G|)^{\alpha}}.
$$
Fix $\beta,\eps>0$ so that $\beta+\eps<\frac{\alpha \wedge 1}{6\log(d-1)+8 \log 2}$, (for example $\beta=\frac{\alpha \wedge 1}{16\log(d-1)}$). Then as $|G|\to\infty$, the proportion of vertices in $G$
whose $\beta \log\log |G|$-neighborhood is not a $d$-regular tree is  $o((\log|G|)^{-\eps})$.
\end{theorem}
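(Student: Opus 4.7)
Set $K = \lfloor \beta \log\log|G| \rfloor$. The idea is to first reduce the event ``the $K$-neighborhood of $v$ is not a $d$-regular tree'' to ``$v$ lies on a short nontrivial cycle,'' and then apply the near-Ramanujan cycle-count bound \eqref{e:c main finite rho}.

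\textbf{Structural reduction.} I would begin by proving the equivalence: for every vertex $v$ in a $d$-regular graph $G$, the ball $B(v, K)$ is a $d$-regular tree if and only if $v$ lies on no nontrivial closed walk of length $\le 2K+1$. For one direction, if $v$ is on a nontrivial $\ell$-cycle with $\ell \le 2K+1$, then every vertex $w(i)$ of the walk satisfies $d_G(v, w(i)) \le \min(i, \ell - i) \le \lfloor \ell/2 \rfloor \le K$, so the walk is contained in $B(v, K)$, and its non-triviality in $\pi_1(G, v)$ passes (by the injectivity recalled in Section \ref{section_preliminaries}) to $\pi_1$ of the induced subgraph on $B(v, K)$, forcing the ball to be non-tree. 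For the other direction, if $B(v, K)$ is not a tree, take the BFS spanning tree $T$ of the induced subgraph rooted at $v$; at least one non-tree edge $e = (x, y)$ must exist with $d(v, x), d(v, y) \le K$, and the BFS fundamental closed walk $v \to \cdots \to x \to y \to \cdots \to v$ has length at most $2K+1$ and is nontrivial in the sense of Section \ref{s:nullcycles} (the directed edge $e$ is traversed exactly once while its reverse is not used; half-loops are nontrivial by convention).

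\textbf{Counting and exponent.} Averaging the pointwise bound $\mathbf{1}(o \text{ on nontrivial } \ell\text{-cycle}) \le \gamma_\ell(G, o)$ over the uniform random root $o$ of $G$ gives
\[
\Pr\bigl(B(o, K) \text{ is not a tree}\bigr) \;\le\; \sum_{\ell \le 2K+1} \ev \gamma_\ell(G).
\]
Combining the hypothesis with \eqref{e:c main finite rho} yields $\ev \gamma_\ell(G) = O\!\bigl(\nu_\ell\,(\log|G|)^{-(\alpha \wedge 1)} \log\log|G|\bigr)$, and the geometric growth of $\nu_\ell = 2\cdot 10^{11}\,2^{4\ell}(d-1)^{3\ell}\ell$ makes the sum dominated by $\nu_{2K+1} = O\bigl(K\,2^{8K}(d-1)^{6K}\bigr)$. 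Substituting $K = \beta\log\log|G|$ replaces $2^{8K}(d-1)^{6K}$ by $(\log|G|)^{\beta(8\log 2 + 6\log(d-1))}$, so the non-tree proportion is at most
\[
(\log|G|)^{\beta(6\log(d-1)+8\log 2) - (\alpha\wedge 1) + o(1)}.
\]
Multiplying the hypothesis $\beta + \eps < (\alpha \wedge 1)/(6\log(d-1)+8\log 2)$ by the denominator $6\log(d-1)+8\log 2 > 1$ (valid for $d\ge 3$) gives $\beta(6\log(d-1)+8\log 2) + \eps < \alpha\wedge 1$, so the exponent above is strictly less than $-\eps$ and the bound is $o((\log|G|)^{-\eps})$ as required.

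\textbf{Main obstacle.} The crux is the structural reduction above. A naive approach would upper-bound the set of non-tree vertices by $\sum_u |B(u, K)| \cdot \mathbf{1}(u \text{ on a nontrivial cycle of length} \le 2K+1)$, which introduces an extraneous $(d-1)^K$ factor from the ball size and degrades the exponent $6\log(d-1)$ into $7\log(d-1)$, missing the claimed constant. The observation that non-tree-ness of $B(v, K)$ always produces a nontrivial cycle \emph{based at $v$ itself} (not merely intersecting $B(v, K)$) is precisely what avoids the spurious ball-volume factor and gives the sharp exponent $6\log(d-1)+8\log 2$.
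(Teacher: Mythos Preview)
Your counting and exponent analysis is correct and matches the paper's, but your structural reduction has a genuine gap. In the paper's definition (stated just before Theorem~\ref{t:null and ordinary}), a cycle of length $\ge 2$ is \emph{trivial} when every \emph{non-self-loop} directed edge is balanced; self-loops are explicitly ignored in this check. Consequently, if the only defect in $B(v,K)$ is a self-loop at some vertex $x$ with $1\le d(v,x)\le K$, then the BFS fundamental walk $v\to x,\ e,\ x\to v$ has length $\ge 3$, all of its non-loop edges cancel, and it is \emph{trivial}---so there is \emph{no} nontrivial cycle through $v$ witnessing this defect, and your inequality $\Pr(B(o,K)\text{ not a tree})\le \sum_{\ell\le 2K+1}\ev\gamma_\ell(G)$ fails. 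This is exactly why the paper splits into two cases (``nontrivial cycle through $v$'' versus ``loop in the $K$-neighborhood'') and handles the second by mass transport, obtaining $\ev N\le c(d-1)^K\ev\gamma_1(G)$.

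So the ball-volume factor you advertise avoiding is in fact unavoidable for the loop case---but it is harmless there, because it multiplies $\ev\gamma_1(G)$, which is controlled by $\nu_1=O(1)$ rather than $\nu_{2K}$. With $K=\beta\log\log|G|$ the loop contribution is $O\big((\log|G|)^{\beta\log(d-1)-(\alpha\wedge 1)}\big)$, which is dominated by your cycle term and still $o((\log|G|)^{-\eps})$. Once you add this separate loop step, your argument coincides with the paper's; your ``main obstacle'' paragraph should be revised accordingly, since the sharp constant $6\log(d-1)+8\log 2$ comes purely from the cycle case and the loop case never threatened it.
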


\begin{proof}
Note that if the $k=\beta \log\log|G|$ neighborhood of a vertex $v$ is not a tree,
then $v$ is contained in a nontrivial cycle  of length $2k$, or its $k$-neighborhood contains
a vertex with a loop. We rule out these two cases separately.

By Theorem \ref{t:main}, \eqref{e:c main finite rho}, we have
$$
 \ev \gamma_k(G,o)
 \le c(d-1)^{3k}2^{4k}k^3
 \left(
 \frac{\log\log|G|}
 {\log|G|}+\frac{1}{(\log|G|)^\alpha}\right)
$$
so if $k=2\beta \log\log|G|$, then the dominant factor is
$$
(\log|G|)^{-\alpha\wedge 1+4\beta\log(4d-4)}
$$
and this is  $o(\log|G|^{-\eps'})$ for some $\eps'>\eps$ since
$$\beta+\eps<\frac{\alpha \wedge 1}{\log((d-1)^{3}2^{4})}.$$
The inequality also holds uniformly for all smaller $k$ (with a uniform constant in the $o(\cdot)$ term), and summing over all such
we get that the expected number of nontrivial cycles at $o$ of length at most $2k$
is $o(\log|G|^{-\eps}\log\log |G|)\to 0$. This rules out the first option.

For the second option, we use a simple mass transport argument (see the proof of Theorem \ref{t:null and ordinary} for the formal setup). Let each vertex with a loop send
mass $k$ to all elements in its $k$-neighborhood. Then
the expected amount of mass sent is at most
$d(d-1)^{k-1}\ev \gamma_{1}(G)$. The amount of mass
received is the number of vertices with loops in the $k$-neighborhood, lets call this $N$.
So we have
$$
\ev N\le c(d-1)^k \left(
 \frac{\log\log|G|}
 {\log|G|}+\frac{1}{(\log|G|)^\alpha}\right)
$$
By the same argument as before, this is $o(\log|G|^{-\eps})$
with the above choice of $\beta$.
\end{proof}

\subsection{Weakly Ramanujan sequences}

We are ready to prove that a $d$-regular weakly Ramanujan sequence of finite
graphs converges to the $d$-regular tree. \bigskip

\begin{proof}[Proof of Theorem \ref{weakram}] Let $(G_{n})$ be a weakly
Ramanujan sequence of finite $d$-regular graphs. Assume by contradiction,
that it does not have essentially large girth. Then, by passing to a
suitable subsequence, there exists $c>0$ and $L>0$ such that the cycle
densities $\gamma_L(G_n)>c$.

By passing to a subsequence, we can also assume that $(G_{n})$ is
Benjamini-Schramm convergent. Let $G$ be the limit of $(G_{n})$.

We claim that $G$ is infinite a.s. Assume this is not the case, then there
exists $R>0$ such that $G$ has size $R$ with probability $p>0$. This means,
that with probability at least $p$, the $R+1$-ball around the root has the
same size as the $R$-ball. So, for large enough $n$, the same holds for all $%
G_{n}$ with $p/2$. That is, at least $\left\vert G_{n}\right\vert p/2$
vertices lie in a connected component of size at most $R^{\prime }$, where $%
R^{\prime }$ is the size of the $R$-ball in the $d$-regular tree. This
implies that the number of connected components of $G_{n}$ is at least $%
\left\vert G_{n}\right\vert p/2R^{\prime }$, hence,
\begin{equation*}
\mu _{G_{n}}(1)\geq \frac{p}{2R^{\prime }}\text{.}
\end{equation*}%
This contradicts the assumption that $(G_{n})$ is weakly Ramanujan. So, our
claim holds.

We claim that $G$ is Ramanujan a.s. By the proof of Proposition \ref{ekvivalens}, $%
\mu _{G_{n}}$ weakly converges to the expected spectral measure $\mu_G $,
which yields $\mu_G ([-\rho (T_{d}),\rho (T_{d})])=1$, and  this implies that
$\mu_{G,o} ([-\rho (T_{d}),\rho (T_{d})])=1$ a.s. Since the spectral radius equals
the radius of the support of the spectral measure $\mu _{G,o}$ for any
rooted connected graph $G$ (see \cite[Lemma 2.1]{kesten1}), this implies that $\rho
(G)\leq \rho (T_{d})$ a.s.\ and our claim holds.

Now using Theorem \ref{unimod}, $G=T_{d}$ a.s., that is, $(G_{n})$ converges
to $T_{d}$ and so by Proposition \ref{ekvivalens}, it has essentially large
girth, a contradiction. Our theorem holds. \end{proof}

%
%
%

\section{Spectral radius and the fundamental group -- a sharp bound}
\label{s:rho and pi1}

\subsection{Relations in deterministic graphs}

In this section we analyze the spectral radius of a fixed rooted $d$-regular
infinite graph using random walks on its fundamental group.

For a graph $G$ rooted at $o\in V(G)$ and an arbitrary finite multiset $N$ of cycles in $G$
starting at $o$, we will also use  $N$ to denote the corresponding Markov operator on the
fundamental group $\pi_{1}(G,o)$ (which is a free product of copies of $\mathbb Z$ and the group of order $2$),
where the step
distribution is the uniform measure on $N$.  Let
$\left\Vert N\right\Vert $ denote the operator norm. The adjoint of the operator $N$ is the operator
corresponding to the multi-set $N^{-1}=\left\{ w^{-1}\mid w\in N\right\}$.
The multi-set $N$ may not be closed
to taking inverses, so the Markov operator need not be self-adjoint. We will use the property
\begin{equation*}
\left\Vert N\right\Vert =\sqrt{\left\Vert NN^{-1}\right\Vert }.
\end{equation*}%

Let $G$ be a graph, let vertices \thinspace $x,y\in V(G)$ and $k>0$ let $%
W=W_{k}(x,y)$ denote the set of walks of length $k$ in $G$ starting at $x$
and ending at $y$. Let $o\in V(G)$, let $u$ be a walk from $o$ to $x$ and
let $v$ be a walk from $y$ to $o$. When $W$ is non-empty, let
\begin{equation*}
N=\left\{ uwv\mid w\in W\right\} \subseteq \pi _{1}(G,o)
\end{equation*}%
and let
\begin{equation}\label{e:kappa}
\kappa _{k}(x,y)=\left\Vert N\right\Vert \text{.}
\end{equation}
Now $\kappa _{k}(x,y)$ does not depend on the choice of $o\,,u$ and $v$,
because the multi-set
\begin{equation*}
NN^{-1}=\left\{ uw^{\prime }w^{-1}u^{-1}\mid w,w^{\prime }\in W\right\}
\end{equation*}%
(defined with multiplicites),
so the corresponding Markov operator is the conjugate of the operator
belonging to $WW^{-1}$ by the fixed element $u$.

Note that the norm $\kappa_k$ satisfies
\begin{equation*}
\kappa _{k}(x,y)^2=\rho (\mathrm{Cay}(\pi _{1}(G,x),WW^{-1}))\in
\lbrack 0,1].
\end{equation*}%

Let $\mathcal{N}_{k}$ denote the set of nullcycles of length $k$
starting at $o$ (see Definition \ref{d:nullcycle}).
The following lemma relates $\left| \mathcal{N}_{k} \right|/ \left| W_k(o,o) \right|$,
the probability that a random cycle of length $k$ is a nullcycle to
the spectral radius $\kappa _{k}$. This relation can be established also
with respect to paths connecting two vertices.

\begin{lemma}
\label{basic}Let $G$ be a $d$-regular graph rooted at $o$ and let $k>0$. Let
$\ x$ be a vertex in $G$, and let $w$ be a path of length $|w|$ from $x$ to $%
o.$ Then
\begin{equation*}
\left\vert (W_{k}(o,x)w)\cap \mathcal{N}_{k+|w|}\right\vert \leq \left\vert
W_{k}(o,x)\right\vert \kappa _{k}(o,x)\leq (d\rho (T_{d}))^{k+|w|}\text{.}
\end{equation*}%
In particular, with $x=o$ and $w$ trivial we have%
\begin{equation*}
|\mathcal{N}_{k}|\leq \left\vert W_{k}(o,o)\right\vert \kappa _{k}(o,o)\leq
(d\rho (T_{d}))^{k}\text{.}
\end{equation*}
\end{lemma}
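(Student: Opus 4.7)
The proof treats the two inequalities separately, each using the description of $\kappa_k(o,x)$ as an operator norm on $\ell^2(\pi_1(G,o))$. For the first inequality, let $M$ be the Markov operator on $\ell^2(\pi_1(G,o))$ whose step distribution is the uniform probability measure on the multiset $N=W_k(o,x)\,w\subset \pi_1(G,o)$, so by definition $\|M\|=\kappa_k(o,x)$. The matrix coefficient $\langle M\delta_e,\delta_e\rangle$ is exactly the probability that a uniformly chosen $w'\in W_k(o,x)$ produces a nullhomotopic loop $w'w$, namely $|(W_k(o,x)w)\cap\mathcal N_{k+|w|}|/|W_k(o,x)|$. The universal bound $|\langle M\delta_e,\delta_e\rangle|\le\|M\|$ then gives the first inequality after multiplying through by $|W_k(o,x)|$.

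For the second inequality, I first observe that $|W_k(o,x)|\,\kappa_k(o,x)=\|A'\|$, where $A'=\sum_{w'\in W_k(o,x)}[w'w]$ is viewed as an element of $\mathbb C[\pi_1(G,o)]$ acting on $\ell^2(\pi_1(G,o))$ by left multiplication. Fix a lift $\tilde o$ of $o$ in the universal cover $T=T_d$, and a lift $\tilde x$ of $x$ obtained by lifting one chosen path from $o$ to $x$. For $w'\in W_k(o,x)$, let $g(w')\in\pi_1(G,o)$ be the unique deck transformation sending $\tilde x$ to the endpoint of the lift of $w'$ starting at $\tilde o$, and let $h\in\pi_1(G,o)$ be the analogous element for the fixed path $w$. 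A short lift computation then yields $[w'w]=g(w')\,h$, so $A'=A\cdot h$, where $A=\sum_{g\in\pi_1}a_g\,g$ and $a_g=|W_k(\tilde o,g\tilde x;T)|$. Since right multiplication by the invertible element $h$ is a unitary on $\ell^2(\pi_1)$, one has $\|A'\|=\|A\|$.

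To bound $\|A\|$ I compare it with the self-adjoint Markov operator $M_T$ on $T$, which satisfies $\|M_T\|=\rho(T_d)$. Introduce the $\pi_1$-equivariant embeddings $\Phi,\Phi':\ell^2(\pi_1)\hookrightarrow \ell^2(T)$ given by $\Phi(\delta_g)=\delta_{g\tilde x}$ and $\Phi'(\delta_g)=\delta_{g\tilde o}$; both are well-defined isometries because $\pi_1$ acts freely on $T$. A matrix-entry computation (using $|W_k(\tilde o,g\tilde x;T)|=d^k\langle M_T^k\delta_{\tilde o},\delta_{g\tilde x}\rangle$ and the $\pi_1$-invariance of walk counts in $T$) shows that $A=d^k\,\Phi^{*}M_T^{k}\Phi'$, hence $\|A\|\le d^k\|M_T\|^k=(d\rho(T_d))^k\le (d\rho(T_d))^{k+|w|}$, the last inequality using $d\rho(T_d)=2\sqrt{d-1}\ge 1$ for $d\ge 2$. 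Specializing to $x=o$ with $w$ trivial immediately yields the ``in particular'' clause.

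The main obstacle is the covering-space bookkeeping behind the identity $A=d^k\,\Phi^{*}M_T^{k}\Phi'$: one must be careful about left versus right actions of $\pi_1$ on $T$ and about which lifts are used for $w$ and for the various $w'$, since otherwise the factor $h$ reappears on the wrong side and the right-multiplication-is-unitary trick fails. The argument is cleanest when $G$ has no half-loops, so that $\pi_1$ is free and acts freely on the cover; half-loops are absorbed using the standing conventions of Section~\ref{section_preliminaries} (contributing $\mathbb Z/2$-factors to $\pi_1$) without affecting the operator comparison.
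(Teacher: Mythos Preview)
Your argument for the first inequality is exactly the paper's: the one-step return probability is a diagonal matrix coefficient of the Markov operator and is therefore bounded by its norm.

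For the second inequality you take a genuinely different route. The paper iterates: it observes that $(W_k(o,x)w)^n\cap\mathcal N_{n(k+|w|)}$ sits inside $\mathcal N_{n(k+|w|)}$, writes $|W_k(o,x)|^n$ as $|\mathcal N_{n(k+|w|)}|$ divided by the $n$-step return probability of the same walk on $\pi_1$, takes $n$th roots, and lets $n\to\infty$. Your approach instead realises $|W_k(o,x)|\,\kappa_k(o,x)$ as the norm of the unnormalised convolution operator $A'$ and bounds this norm directly by compressing $d^kM_T^k$ on $\ell^2(T_d)$ to the $\pi_1$-orbits of $\tilde o$ and $\tilde x$. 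This is cleaner in one respect: it sidesteps the question of whether $\lim_n p_n^{1/n}$ recovers the \emph{norm} (as opposed to the spectral radius) of a non-self-adjoint convolution operator, and it even yields the sharper bound $(d\rho(T_d))^k$ before you relax it to $(d\rho(T_d))^{k+|w|}$.

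One small correction on the bookkeeping you flag yourself. With your choices $\Phi(\delta_g)=\delta_{g\tilde x}$ and $\Phi'(\delta_g)=\delta_{g\tilde o}$, a matrix-entry check gives
\[
\big\langle d^k\Phi^*M_T^k\Phi'\,\delta_{g'},\delta_g\big\rangle
=|W_k(g'\tilde o,g\tilde x;T)|=|W_k(\tilde o,g'^{-1}g\,\tilde x;T)|=a_{g'^{-1}g},
\]
whereas left multiplication by $A$ has $(g,g')$-entry $a_{gg'^{-1}}$. So $d^k\Phi^*M_T^k\Phi'$ is not literally $\lambda(A)$ but rather (after conjugating by the inversion unitary $J\delta_g=\delta_{g^{-1}}$) the right-convolution operator $\rho(A)$. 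Since $\|\rho(A)\|=\|\lambda(A)\|$ for any element of the group algebra, your norm bound $\|A'\|=\|A\|\le d^k\rho(T_d)^k$ is unaffected; just state the identity for $\rho(A)$ (or redefine $\Phi,\Phi'$ via $g\mapsto g^{-1}\tilde x,\,g^{-1}\tilde o$) to make the displayed equation literally true.
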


\begin{proof} We have

\begin{equation*}
\left\vert W_{k}(o,x)w\cap \mathcal{N}_{k+|w|}\right\vert =\left\vert
W_{k}(o,x)w\right\vert \frac{\left\vert W_{k}(o,x)w\cap \mathcal{N}%
_{k+|w|}\right\vert }{\left\vert W_{k}(o,x)w\right\vert }
\end{equation*}%
and the second factor on the right hand side equals the one step return
probability of the random walk on $\pi _{1}(G,o)$ with uniform step
distribution on $W_{k}(o,x)w$, hence it is at most the spectral radius of
the corresponding Markov operator. This proves the left inequality in the
lemma.

Now consider
\begin{eqnarray*}
\left\vert W_{k}(o,x)w\right\vert ^{n}&=&\left\vert (W_{k}(o,x)w)^{n}\cap
\mathcal{N}_{n(k+|w|)}\right\vert \frac{\left\vert
(W_{k}(o,x)w)^{n}\right\vert }{\left\vert (W_{k}(o,x)w)^{n}\cap \mathcal{N}%
_{n(k+|w|)}\right\vert }\\&\leq& \left\vert \mathcal{N}_{n(k+|w|)}\right\vert
\frac{\left\vert (W_{k}(o,x)w)^{n}\right\vert }{\left\vert
(W_{k}(o,x)w)^{n}\cap \mathcal{N}_{n(k+|w|)}\right\vert }
\end{eqnarray*}%
The second factor on the right hand side equals the inverse of the $n$-step
return probability of the same random walk as above. Taking $n$-th roots and
the limit as $n$ goes to infinity gives us the right side inequality of the
lemma. \end{proof} \bigskip

\begin{theorem}
\label{szep}Let $G$ be a $d$-regular graph rooted at $o$ and let $n,k>0$.
Then
\begin{equation*}
\left\vert W_{nk}(o,o)\right\vert \geq \dsum\limits_{w\in \mathcal{N}%
_{nk}}\dprod\limits_{j=0}^{n-1}\kappa _{k}(w_{jk},w_{(j+1)k})^{-1}\geq \frac{%
\left\vert \mathcal{N}_{k}\right\vert ^{n}}{(d\rho (T_{d}))^{nk}}\left\vert
W_{k}(o,o)\right\vert ^{n}.
\end{equation*}%
This implies
\begin{equation*}
d\rho (G)\geq \left( \dsum\limits_{w\in \mathcal{N}_{nk}}\dprod%
\limits_{j=0}^{n-1}\kappa _{k}(w_{jk},w_{(j+1)k})^{-1}\right) ^{1/nk}.
\end{equation*}%
Moreover, when we take the limit of the right hand side as $k\rightarrow
\infty $ (and $n$ changing arbitrarily) we get equality.
\end{theorem}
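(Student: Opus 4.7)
The proof splits into four tasks: the two stated inequalities, the $\rho(G)$ bound, and the asymptotic sharpness. My plan is as follows.

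\textbf{Step 1 (first inequality).} I would group walks in $W_{nk}(o,o)$ and nullcycles in $\mathcal N_{nk}$ by their waypoint sequence $\vec v=(o,v_1,\ldots,v_{n-1},o)$, where $v_j=w_{jk}$. The number of walks in $W_{nk}(o,o)$ with given waypoints $\vec v$ is exactly $\prod_j|W_k(v_j,v_{j+1})|$, and these counts sum to $|W_{nk}(o,o)|$. To count nullcycles with waypoints $\vec v$, fix paths $u_j$ from $o$ to $v_j$ (with $u_0=u_n$ trivial); then $N_j:=u_jW_k(v_j,v_{j+1})u_{j+1}^{-1}$ is a multiset in $\pi_1(G,o)$ of size $|W_k(v_j,v_{j+1})|$, and its associated Markov operator on $\ell^2(\pi_1(G,o))$ has norm $\kappa_k(v_j,v_{j+1})$ by basepoint-independence of the definition. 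A nullcycle with waypoints $\vec v$ is exactly a tuple in $N_1\times\cdots\times N_n$ whose product is trivial in $\pi_1(G,o)$, and the number of such tuples equals $\prod_j|N_j|\cdot(\mu_1*\cdots*\mu_n)(\mathrm{id})$ with $\mu_j$ uniform on $N_j$. Submultiplicativity of the operator norm gives $(\mu_1*\cdots*\mu_n)(\mathrm{id})=\langle M_{N_1}\cdots M_{N_n}\delta_{\mathrm{id}},\delta_{\mathrm{id}}\rangle\le\prod_j\kappa_k(v_j,v_{j+1})$. Rearranging and summing over $\vec v$ yields the first inequality.

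\textbf{Step 2 (second inequality).} Restrict the sum to the $|\mathcal N_k|^n$ nullcycles whose every waypoint equals $o$, equivalently whose each $k$-segment is itself in $\mathcal N_k$; each contributes weight $\kappa_k(o,o)^{-n}$. Lemma \ref{basic} applied with $x=o$ and trivial $w$ gives $|W_k(o,o)|\kappa_k(o,o)\le(d\rho(T_d))^k$, i.e.\ $\kappa_k(o,o)^{-1}\ge|W_k(o,o)|/(d\rho(T_d))^k$, and the claimed bound follows immediately.

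\textbf{Step 3 ($\rho(G)$ bound and sharpness).} The inequality $|W_{nk}(o,o)|\le(d\rho(G))^{nk}$ comes from $p_{o,nk}=\langle M^{nk}\delta_o,\delta_o\rangle\le\|M\|^{nk}=\rho(G)^{nk}$ together with $|W_m(o,o)|=d^m p_{o,m}$; combined with the first inequality and taking $nk$-th roots, this gives the claimed lower bound on $d\rho(G)$. For sharpness, observe that the $nk$-th root of the Step 2 lower bound is $(|\mathcal N_k|\,|W_k(o,o)|)^{1/k}/(d\rho(T_d))$, independent of $n$. As $k\to\infty$, $|W_k(o,o)|^{1/k}\to d\rho(G)$ by the Kesten formula $\rho(G)=\lim_k p_{o,k}^{1/k}$, and $|\mathcal N_k|^{1/k}\to d\rho(T_d)$ because nullcycles in $G$ lift bijectively to closed walks at the root of the universal cover $T_d$. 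Both limits combine to $d\rho(G)$, matching the upper bound from Step 3, so the right-hand side of the theorem converges to $d\rho(G)$ as $k\to\infty$.

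\textbf{Main obstacle.} The delicate point is the operator-norm step in Step 1. Because the sets $N_j$ need not be closed under inversion, the operators $M_{N_j}$ are generally not self-adjoint, so one cannot use spectral decomposition directly; the right tool is simply submultiplicativity of the operator norm together with the basepoint-independent identity $\|M_{N_j}\|=\kappa_k(v_j,v_{j+1})$. Verifying that the embedding into $\pi_1(G,o)$ via chosen connecting paths yields precisely the multisets whose norms are $\kappa_k(v_j,v_{j+1})$, so that the bookkeeping of multiplicities is consistent between the walk count and the return-probability count, is the technical heart of the argument.
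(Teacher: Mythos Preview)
Your proposal is correct and follows essentially the same approach as the paper. Your waypoint grouping is exactly the paper's ``rewiring classes'' (two walks are equivalent iff $w_{jk}=w'_{jk}$ for all $j$), your use of the auxiliary multisets $N_j=u_jW_k(v_j,v_{j+1})u_{j+1}^{-1}$ and the bound $(\mu_1*\cdots*\mu_n)(\mathrm{id})\le\prod_j\|M_{N_j}\|$ is precisely the paper's operator-norm step, and your Step~2 is identical to the paper's restriction to nullcycles returning to $o$ at each time $jk$ followed by Lemma~\ref{basic}. You additionally supply clean arguments for the $d\rho(G)$ bound and the $k\to\infty$ sharpness (via the sandwich between $d\rho(G)$ and $|\mathcal N_k|^{1/k}|W_k(o,o)|^{1/k}/(d\rho(T_d))\to d\rho(G)$), which the paper's proof block leaves implicit; just note that the limits should be taken along even $k$ since $|\mathcal N_k|=0$ for odd $k$.
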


\begin{proof} Let us denote $W=W_{nk}(o,o)$ and $\mathcal{N=N}%
_{nk}$. We say that $w^{\prime }\in W$ is a \emph{rewiring} of $w\in W$ if $%
w_{jk}^{\prime }=w_{jk}$ for $0\leq j\leq n-1$.

As an example following the line of proof below, all possible rewirings of a nullcycle of length $35=5 \cdot 7$ are shown in figure \ref{fig:rewiring} below. It should be helpful to refer to that figure while reading the proof.
Rewiring is an equivalence
relation, and for $w\in W$ let $[w]$ denote the equivalence class of $w$.
For $w\in \mathcal{N}$ let $p(w)$ denote the probability that a uniform
random element of $[w]$ is nullhomotopic.

\begin{figure}[ht]
\begin{center}
\includegraphics[width=9cm]{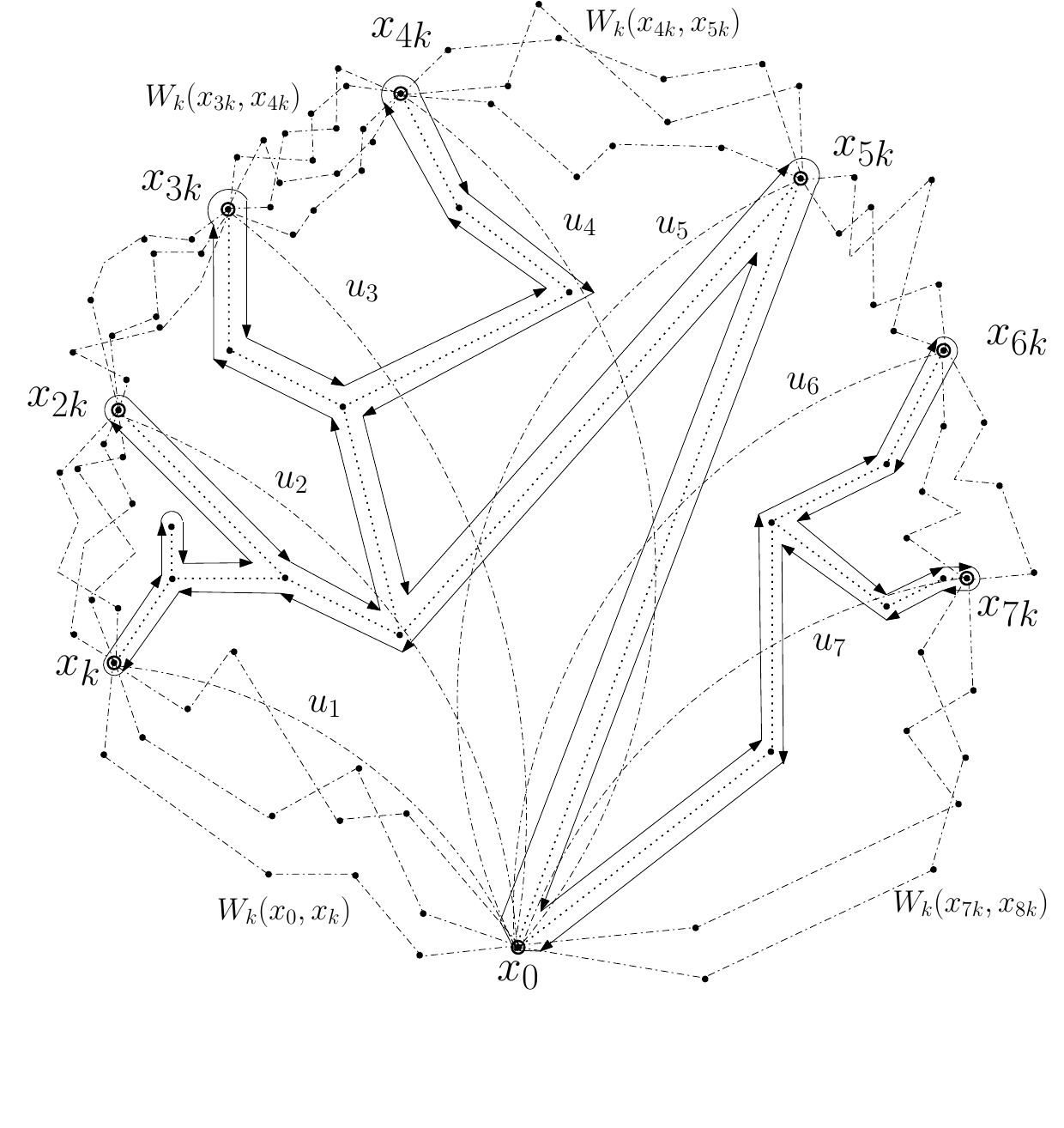}
\end{center}
\caption{Here we see how all  possible rewirings of a path $\omega \in \mathcal{N}_{35}$ are obtained upon replacing segments of length $k = 5$ by other possible replacements. The essential part of the proof is giving an estimate to the probability that such a rewiring $w'$ chosen at random will again be a nullcycle. Namely that the path $w'$ represents the trivial element of the fundamental group $\pi_1(G,x_0)$.}
\label{fig:rewiring}
\end{figure}

Then we have
\begin{equation*}
\left\vert W\right\vert =\dsum\limits_{A\text{ is a rewiring class}%
}\left\vert A\right\vert \geq \dsum\limits_{\substack{ A\text{ is a rewiring}
\\ \text{class, }A\cap \mathcal{N\neq \varnothing }}}\left\vert A\right\vert
=\dsum\limits_{w\in \mathcal{N}}\frac{\left\vert [w]\right\vert }{\left\vert
[w]\cap \mathcal{N}\right\vert }=\dsum\limits_{w\in \mathcal{N}}p(w)^{-1}.
\end{equation*}%
We claim that for all $w\in \mathcal{N}$ we have
\begin{equation*}
p(w)\leq \dprod\limits_{j=0}^{n-1}\kappa _{k}(w_{jk},w_{(j+1)k}).
\end{equation*}%
To prove this, for $0\leq j\leq n$ let $u_{j}$ be a path from $o$ to $w_{jk}$%
. Assume that $u_{0}$ and $u_{n}$ are the empty paths. For $0\leq j\leq n-1$
let
\begin{equation*}
N_{j}=\left\{ u_{j}wu_{j+1}^{-1}\mid w\in W_{k}(w_{jk},w_{(j+1)k})\right\}
\subseteq \pi _{1}(G,o)
\end{equation*}%
and let $v_{j}$ be a uniform random element of $N_{j}$. Let $N_{j}$ also denote
the Markov operator corresponding to the multi-set $N_{j}$. Then $\left\Vert
N_{j}\right\Vert =\kappa _{k}(w_{jk},w_{(j+1)k})$ by definition.

Now the random element $v=v_{0}\cdots v_{n-1}$ and the uniform random element
of $[w]$ have the same distribution as elements on the fundamental group. Indeed, they are related by adding or deleting
the nullcycles $u_{j}^{-1}u_{j}$. That
is, $p(w)$ equals the probability that $v$ is nullhomotopic. Let $e$ be the
characteristic vector of the identity element in $\pi _{1}(G,o)$. Using the
Cauchy-Schwarz inequality, this gives
\begin{eqnarray*}
p(w) &=&\left\langle e,e\dprod\limits_{j=0}^{n-1}N_{j}\right\rangle \leq
\left\langle
e\dprod\limits_{j=0}^{n-1}N_{j},e\dprod\limits_{j=0}^{n-1}N_{j}\right\rangle
^{1/2}\leq \\
&\leq &\left\Vert \dprod\limits_{j=0}^{n-1}N_{j}\right\Vert \leq
\dprod\limits_{j=0}^{n-1}\left\Vert N_{j}\right\Vert
=\dprod\limits_{j=0}^{n-1}\kappa _{k}(w_{jk},w_{(j+1)k})
\end{eqnarray*}%
and our claim holds.

Together with our first estimate on $\left\vert W\right\vert $ this completes
the proof of the first inequality of the theorem. For the second claim, note
that restricting the sum to nullcycles that return to $o$ at every
time $kj$ we get the lower bound

\begin{eqnarray*}
\dsum\limits_{w\in \mathcal{N}_{nk}}\dprod\limits_{j=0}^{n-1}\kappa
_{k}(w_{jk},w_{(j+1)k})^{-1} &\geq &\left\vert \mathcal{N}_{k}\right\vert ^{n}\kappa _{k}(o,o)^{-n}\\
 & \geq & \frac{%
\left\vert \mathcal{N}_{k}\right\vert ^{n}}{(d\rho (T_{d}))^{nk}}\left\vert
W_{k}(o,o)\right\vert ^{n}
\end{eqnarray*}%
\bigskip

Here the last inequality follows from Lemma \ref{basic}. \end{proof}

Let $G$ be a $d$-regular graph rooted at $o$. We define a new distribution
on the vertices of $G$ as follows. For $k,n>0$ where $n$ is even and $x\in
V(G)$ let $p(k,n,x)$ denote the probability that a uniform random
null-homotopic walk of length $n$ starting at $o$ is at $x$ at time $k$. Let
\begin{equation}\label{e:pdef}
p_{k}(x)=\lim_{n\rightarrow \infty }p(k,n,x)\text{.}
\end{equation}%
which, for each $k$ that describes
where the first $k$-segment of a the infinite bride of large length ends. The fact that this limit exists is a consequence of Corollary \ref{c:inifinite nullcycle}.

Let
\begin{equation}\label{e:kappa*}
\kappa _{k}^{\ast }(G,o)=\dprod\limits_{x\in V(G)}\kappa _{k}(o,x)^{p_{k}(x)}%
\text{.}
\end{equation}%
i.e. the geometric mean of the $\kappa _{k}(o,x)$ averaged over the vertices
$x$ with respect to the distribution $p_{k}$.

\begin{lemma}\label{l:sharp}
For any connected $d$-regular infinite graph $G$ we have
$$
 \rho(G)=\rho(\T_d) \lim_{k\to\infty} \kappa_{k}^{*}(G,o)^{-1/k}.
$$
Moreover, the terms $\kappa_k^*(G,o)^{-1/k}$ are bounded above by a constant depending on $d$ only.
\end{lemma}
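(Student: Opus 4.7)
The plan is to derive both parts from Theorem \ref{szep}, using Jensen's inequality to bridge $\kappa_k^*(G,o)$ and the aggregate sum $\sum_w\prod_j\kappa_k^{-1}$ that appears there.

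For the boundedness claim, the operator associated with $\kappa_k(o,x)$ is the uniform step distribution on $N = W_k(o,x)\cdot v_x \subset \pi_1(G,o)$, with $\kappa_k(o,x)^2 = \|NN^{-1}\|$. The identity appears in $NN^{-1}$ with multiplicity at least $|W_k(o,x)|$ (from the diagonal pairs $(w,w)$), so the one-step return probability of the $NN^{-1}$-walk is at least $1/|W_k(o,x)| \ge d^{-k}$. This forces $\kappa_k(o,x) \ge d^{-k/2}$ for every $x$, and taking the $p_k$-weighted geometric mean gives $\kappa_k^*(G,o) \ge d^{-k/2}$, so $\kappa_k^*(G,o)^{-1/k} \le \sqrt{d}$.

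For the main equality, I would apply Theorem \ref{szep} with $w$ a uniform random element of $\mathcal{N}_{nk}$,
$$
|W_{nk}(o,o)| \ge |\mathcal{N}_{nk}| \cdot \ev\left[\prod_{j=0}^{n-1}\kappa_k(w_{jk},w_{(j+1)k})^{-1}\right],
$$
apply Jensen's inequality to the product inside the expectation, take logarithms and divide by $nk$. As $n\to\infty$ the left side tends to $\log(\rho(G)/\rho(\T_d))$ by Lemma \ref{l:returns} and the definition of $\rho$. The crucial claim is that
$$
\frac{1}{n}\sum_{j=0}^{n-1}\ev\!\left[\log\kappa_k(w_{jk},w_{(j+1)k})\right] \longrightarrow \log\kappa_k^*(G,o) \quad\text{as } n\to\infty.
$$
The $j=0$ term yields this directly from Corollary \ref{c:inifinite nullcycle}, which identifies the marginal of $w_k$ in the limit with $p_k$; time-reversal symmetry of the uniform nullcycle handles $j=n-1$ identically; and for bulk $j$, I would use a mass-transport-style argument based on cyclic shift of nullcycles. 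Shifting the time origin by $k$ corresponds to re-rooting the universal cover $\T_d$, an operation preserving $\kappa_k$ (which depends only on walks in $G$ and not on the choice of base point), and a bijection on pointed-nullcycle configurations matches each bulk-$j$ expectation with the $j=0$ expectation up to boundary corrections of order $O(k/n)$ that vanish in the Cesaro average. This yields $\rho(G) \ge \rho(\T_d)\kappa_k^*(G,o)^{-1/k}$ for each $k$. The matching reverse inequality then comes from the sharpness assertion (``moreover'') of Theorem \ref{szep}: the identity $\lim_{k\to\infty}(\sum_w\prod_j\kappa_k^{-1})^{1/nk} = d\rho(G)$ forces Jensen's inequality to be asymptotically tight as $k\to\infty$, yielding $\rho(\T_d)\lim_k\kappa_k^*(G,o)^{-1/k} = \rho(G)$ and incidentally showing the limit exists.

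The main obstacle will be the Cesaro convergence. The uniform nullcycle of length $nk$ is not stationary in $j$: for $d\ge 3$, Proposition \ref{p:inf bridge} shows that $|X_n|$ is transient, so the walk drifts away from $o$ and the joint law of $(w_{jk},w_{(j+1)k})$ for bulk $j$ genuinely differs from the law at $j=0$. The resolution is that what we need is not stationarity of the marginal distribution but only of the $\log\kappa_k$-expectation, and the cyclic-shift bijection between rooted nullcycles at different base points of $\T_d$, combined with the universal-cover invariance of $\kappa_k$, provides this equality up to the boundary terms that disappear in the Cesaro average.
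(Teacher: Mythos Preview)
Your boundedness argument is fine and actually cleaner than the paper's route, which extracts the bound from its two-sided pointwise estimate on $\kappa_k(o,x)$.

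The main argument, however, has a genuine gap. Lemma~\ref{l:sharp} is stated for a \emph{fixed} connected $d$-regular infinite graph $G$; there is no unimodularity hypothesis. Your proposed resolution of the Ces\`aro convergence---a cyclic-shift bijection on nullcycles---does not work in this setting. Cyclically shifting a nullcycle $w\in\mathcal N_{nk}(o)$ by $jk$ yields a nullcycle based at $w_{jk}$, not at $o$, so the shift is a bijection from $\mathcal N_{nk}(o)$ onto $\{(v,w'):w'\in\mathcal N_{nk}(v),\,w'_{(n-j)k}=o\}$, not onto $\mathcal N_{nk}(o)$. Hence
\[
\sum_{w\in\mathcal N_{nk}(o)}\log\kappa_k(w_{jk},w_{(j+1)k})
=\sum_{v}\sum_{\substack{w'\in\mathcal N_{nk}(v)\\ w'_{(n-j)k}=o}}\log\kappa_k(v,w'_k),
\]
which in general differs from $\sum_{w\in\mathcal N_{nk}(o)}\log\kappa_k(o,w_k)$ unless one can average over the starting vertex---precisely what mass transport does for unimodular random graphs in the proof of Theorem~\ref{t:asymptotically sharp}, and precisely what is unavailable here. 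A graph that looks like $T_d$ far from $o$ but has a cycle near $o$ already shows the discrepancy: for bulk $j$ the pair $(w_{jk},w_{(j+1)k})$ typically sits in the tree-like region where $\kappa_k=1$, while $\kappa_k^*(G,o)$, being an average over the time-$k$ position of the infinite nullcycle \emph{from} $o$, sees the cycle and is strictly less than $1$. So neither the per-$k$ inequality nor the Ces\`aro limit follows from your argument.

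The paper's proof avoids Theorem~\ref{szep} entirely and instead works pointwise via Lemma~\ref{basic}. For each vertex $x$ it sandwiches
\[
\frac{|\overline W_k(o,\overline x)|}{|W_k(o,x)|}\le \kappa_k(o,x)\le \frac{(d\rho(T_d))^{k+|x|}}{|W_k(o,x)|},
\]
where $\overline W$ counts paths in the covering tree and $\overline x$ is a lift of $x$. After replacing $|W_k(o,x)|$ and $|\overline W_k(o,\overline x)|$ by crude path-count bounds involving $|W_{k-|x|}(o,o)|$ and $\rho(G)^k$, one takes the $p_k$-weighted geometric mean. Since $p_k$ puts mass $q_k\to1$ on vertices with $|x|\le k^{2/3}$, the $|x|$-dependent factors are subexponential in $k$, and after taking $k$th roots and letting $k\to\infty$ both sides squeeze to $\rho(T_d)/\rho(G)$. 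No averaging over time positions, and no unimodularity, is needed. In the paper's logical order, Lemma~\ref{l:sharp} is then \emph{used} to establish the sharpness clause of Theorem~\ref{t:asymptotically sharp}, rather than the other way around.
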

\begin{proof}
For any vertex $x$ Lemma \ref{basic} gives the lower bound
\begin{eqnarray*}
\left\vert
W_{k}(o,x)\right\vert^{-1}\left\vert
\overline W_{k}(o,\overline x)\right\vert\le
\kappa _{k}(o,x) \leq \left\vert
W_{k}(o,x)\right\vert^{-1} (d\rho (T_{d}))^{k+|x|},
\end{eqnarray*}
where $\overline W$ is the function $W$ for the covering tree and
 $\overline x$ is a lift of $x$ corresponding to $w$ in that Lemma.
Using the simplest lower bounds for the number of paths we get
\begin{eqnarray*}
\rho(G)^{-k}\left\vert
\overline W_{k-|\overline x|}(o,o)\right\vert\le
\kappa _{k}(o,x) \leq
\left\vert W_{k-|x|}(o,o)\right\vert^{-1} (d\rho
(T_{d}))^{k+|x|}
\end{eqnarray*}
Note that $p(k,\cdot)$ assigns probability $q_k$ tending to 1 to vertices $x$ with $%
|x|\leq k^{2/3}.$
\begin{eqnarray*}
\rho(G)^{-k}\left\vert
\overline W_{k-k^{2/3}}(o,o)\right\vert^{q_k}\le
\kappa _{k}^*(G,o) \leq
\left\vert W_{k-k^{2/3}}(o,o)\right\vert^{-q_k} (\rho
(T_{d}))^{(k+k^{2/3})q_k}d^{k+k^{2/3}}
\end{eqnarray*}
The second claim follows by taking $k$th roots; the first follows by letting $k\to\infty$ and noting that the left and right hand sides both converge to $\rho
(T_{d})/\rho(G)$.
\end{proof}

\subsection{An asymptotically sharp bound}

\begin{theorem}\label{t:asymptotically sharp}
Let $G$ be a $d$-regular infinite unimodular random graph. Then for any  $k>0$ we have
\begin{equation*}
\mathbf{E}\log \rho (G)\geq \log \rho (T_{d})-\frac{1}{k}\mathbf{E} \log
\kappa^*_{k}(G,o)
\end{equation*}
and these bounds are sharp in the sense that
\begin{equation*}
\mathbf{E}\log \rho (G)= \log \rho (T_{d})-\lim_{k\to\infty}\frac{1}{k}\mathbf{E} \log
\kappa^*_{k}(G,o).
\end{equation*}
\end{theorem}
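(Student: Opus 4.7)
The plan is to deduce the lower bound from Theorem~\ref{szep} by averaging via the Mass Transport Principle and passing to the limit $n\to\infty$, and then to obtain sharpness from the deterministic identity in Lemma~\ref{l:sharp}. Starting from the bound $|W_{nk}(o,o)| \ge \sum_{w\in\mathcal N_{nk}}\prod_{j=0}^{n-1}\kappa_k(w_{jk},w_{(j+1)k})^{-1}$ of Theorem~\ref{szep}, I take logarithms and apply the arithmetic--geometric mean inequality to pass $\log$ inside the sum, getting
\begin{equation*}
\log|W_{nk}(o,o)| - \log|\mathcal N_{nk}| \;\ge\; -\frac{1}{|\mathcal N_{nk}|}\sum_{w\in\mathcal N_{nk}}\sum_{j=0}^{n-1}\log\kappa_k(w_{jk},w_{(j+1)k}).
\end{equation*}
A Mass Transport argument exactly parallel to the one in Proposition~\ref{p:main for graphs}---sending mass $-\log\kappa_k(w_0,w_k)$ from the basepoint $x$ of each $w\in\mathcal N_{nk}(x)$ to $w_{jk}$, and re-expressing the incoming sum by cyclic re-rooting---shows that after taking expectation over the unimodular random graph $G$, the $j$-th summand is independent of $j$. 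Since nullcycles at $o$ in $G$ are in bijection with cycles in $T_d$ at a chosen lift of $o$, the count $|\mathcal N_{nk}|$ depends only on $d$ and $nk$, so we obtain
\begin{equation*}
\frac{1}{nk}\,\mathbf{E}\log|W_{nk}(o,o)| \;\ge\; \frac{1}{nk}\log|\mathcal N_{nk}| \;-\; \frac{1}{k}\,\mathbf{E}\bigl[\mathbf{E}[\log\kappa_k(o, w^{(n)}_k)\mid G]\bigr],
\end{equation*}
where $w^{(n)}$ is a uniform random element of $\mathcal N_{nk}(o)$.

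To take $n\to\infty$, on the left I use the pointwise spectral bound $p_{o,nk}(G)\le\rho(G)^{nk}$ (the operator norm of $M^n$ is $\rho(G)^n$), giving $\tfrac{1}{nk}\log|W_{nk}(o,o)|\le \log d + \log\rho(G)$ for every $n$, so $\limsup_n$ of the left-hand side is at most $\log d + \mathbf{E}\log\rho(G)$. On the right, Lemma~\ref{l:returns} gives $\tfrac{1}{nk}\log|\mathcal N_{nk}|\to \log d + \log\rho(T_d)$, and Corollary~\ref{c:inifinite nullcycle} yields $w^{(n)}_k \Rightarrow \bar X_k$ for each fixed $G$, so the inner conditional expectation converges pointwise to $\log\kappa^*_k(G,o)$. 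Since $\kappa_k\in(0,1]$, the quantity $-\log\kappa_k$ is nonnegative, and Fatou's lemma, applied first conditionally on $G$ (to handle the discrete weak limit) and then over $G$, gives
\begin{equation*}
\liminf_n \mathbf{E}\bigl[-\mathbf{E}[\log\kappa_k(o, w^{(n)}_k)\mid G]\bigr] \;\ge\; -\mathbf{E}\log\kappa^*_k(G,o).
\end{equation*}
Chaining these three estimates and cancelling $\log d$ yields the first inequality of the theorem.

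For the sharpness claim, I apply Lemma~\ref{l:sharp} pointwise on $G$ (with ``connected'' understood as the component of $o$, which is $d$-regular and infinite a.s.) to obtain $\log\rho(G)-\log\rho(T_d) = -\lim_k \tfrac{1}{k}\log\kappa^*_k(G,o)$. The second conclusion of Lemma~\ref{l:sharp}---that $\kappa^*_k(G,o)^{-1/k}$ is bounded above by a constant depending only on $d$---combined with $\kappa^*_k\le 1$, places $\tfrac{1}{k}\log\kappa^*_k(G,o)$ in a fixed bounded interval independent of $k$ and $G$. Bounded convergence then permits exchanging $\mathbf{E}$ with $\lim_k$, giving the sharpness identity. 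The main technical point I expect to require care is the Mass Transport step: one must verify that the cyclic re-rooting bijection on nullcycles of length $nk$ transports the weight $-\log\kappa_k(w_{jk},w_{(j+1)k})$ at time $jk$ to the corresponding weight at time $0$ of the re-rooted cycle. This is the same template as the treatment of $\chi(w,jk,k)$ in Proposition~\ref{p:main for graphs} and should go through with only notational changes.
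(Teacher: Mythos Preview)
Your proposal is correct and follows essentially the same approach as the paper: Theorem~\ref{szep} plus AM--GM, then Mass Transport to reduce the $j$-sum to the $j=0$ term, then pass to the limit $n\to\infty$ using Corollary~\ref{c:inifinite nullcycle}, and finally invoke Lemma~\ref{l:sharp} with bounded convergence for sharpness. The only cosmetic differences are that you control the left-hand side via the one-line bound $p_{o,nk}\le\rho(G)^{nk}$ rather than taking its limit, and you use Fatou (with $-\log\kappa_k\ge 0$) on the right instead of noting that $\log\kappa_k(o,x)$ is bounded for fixed $k$; both variants are valid and neither changes the structure of the argument.
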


\begin{proof} By Theorem \ref{szep} and the inequality of
arithmetic and geometric means, we have
\begin{equation*}
\left\vert W_{nk}\right\vert \geq \dsum\limits_{w\in \mathcal{N}%
_{nk}}\dprod\limits_{j=0}^{n-1}\kappa _{k}(w_{jk},w_{(j+1)k})^{-1}\geq
\left\vert \mathcal{N}_{nk}\right\vert \left( \dprod\limits_{w\in \mathcal{N}%
_{nk}}\dprod\limits_{j=0}^{n-1}\kappa _{k}(w_{jk},w_{(j+1)k})^{-1}\right) ^{%
\frac{1}{\left\vert \mathcal{N}_{nk}\right\vert }}
\end{equation*}%
Taking logarithm of both sides gives us
\begin{equation*}
\log \left\vert W_{nk}\right\vert -\log \left\vert \mathcal{N}%
_{nk}\right\vert \geq \frac{-1}{\left\vert \mathcal{N}_{nk}\right\vert }%
\dsum\limits_{w\in \mathcal{N}_{nk}}\dsum\limits_{j=0}^{n-1}\log \kappa
_{k}(w_{jk},w_{(j+1)k})
\end{equation*}%
Taking expected value of both sides over the random graph we get
\begin{equation*}
\mathbf{E}\log \left\vert W_{nk}\right\vert -\log \left\vert \mathcal{N}%
_{nk}\right\vert \geq -\dsum\limits_{j=0}^{n-1}\frac{1}{\left\vert \mathcal{N%
}_{nk}\right\vert }\dsum\limits_{w\in \mathcal{N}_{nk}}\mathbf{E}\log \kappa
_{k}(w_{jk},w_{(j+1)k})
\end{equation*}%
We will use the Mass Transport Principle to show that the expression
\begin{equation}
\dsum\limits_{w\in \mathcal{N}_{nk}}\mathbf{E}\log \kappa
_{k}(w_{jk},w_{(j+1)k})  \label{izemize}
\end{equation}%
%
%
does not depend on the position $j$. Let the mass transport be defined as
\begin{equation*}
f(G,x,y)=\dsum\limits_{w\in \mathcal{N}_{nk}(x)}\mathbf{1}(w_{(n-j)k}=y)\log
\kappa _{k}(w_{0},w_{k})=\dsum\limits_{w\in \mathcal{N}_{nk}(y)}\mathbf{1}%
(w_{jk}=x)\log \kappa _{k}(w_{jk},w_{(j+1)k})
\end{equation*}
That is, for every nullhomotopic path $w$ starting at $x$, $x$ sends mass $%
\log \kappa _{k}(w_{0},w_{k})$ to the $(n-j)k$-th position of $w$. The second
equality follows by rooting the path at $y$ instead of $x$. Trivially, the
mass transport does not depend on the root of $G,$ so the Mass Transport
Principle gives us
\begin{equation*}
\dsum\limits_{y\in V(G)}\mathbf{E}f(G,o,y)=\dsum\limits_{x\in V(G)}\mathbf{E}%
f(G,x,o)
\end{equation*}%
that is, the expected mass sent from the root equals the expected mass
received by the root. Plugging in the corresponding equations, we get
\begin{equation*}
\dsum\limits_{w\in \mathcal{N}_{nk}(o)}\mathbf{E}\log \kappa
_{k}(w_{0},w_{k})=\dsum\limits_{w\in \mathcal{N}_{nk}(o)}\mathbf{E}\log
\kappa _{k}(w_{jk},w_{(j+1)k})
\end{equation*}%
and we get that the expression (\ref{izemize}) does not depend on $j$.

This gives
\begin{equation*}
\frac{\mathbf{E}\log \left\vert W_{nk}\right\vert -\log \left\vert \mathcal{N%
}_{nk}\right\vert }{nk}\geq \frac{-1}{k\left\vert \mathcal{N}_{nk}\right\vert
}\dsum\limits_{w\in \mathcal{N}_{nk}(o)}\mathbf{E}\log \kappa
_{k}(w_{0},w_{k})
\end{equation*}%
%
%
The right hand side now equals
\begin{equation*}
-\frac{1}{k} \mathbf{E}\dsum\limits_{x\in V(G)}p(k,nk,x)\log \kappa _{k}(o,x)
\end{equation*}%
with $p$ defined in \eqref{e:pdef}. For $G,k$ fixed, the right hand side is an average
of a bounded function $\log \kappa _{k}(o,x)$ on the vertices $x$ of $G$
with respect to the distribution $p(k,nk,\cdot )$. As $n\rightarrow \infty $, this distribution converges to the distribution $p_{k}(\cdot )$ by Corollary \ref{c:inifinite nullcycle},
and so does the corresponding average by the bounded convergence theorem.
Since each average is a bounded function of $G$, applying the bounded
convergence theorem again, now for the expectation over $G$, we get the
limiting inequality
\begin{equation*}
\mathbf{E}\log \rho (G)-\log \rho (T_{d})\geq -\frac{1}{k} \mathbf{E}\dsum\limits_{x\in
V(G)}p(k,x)\log \kappa _{k}(o,x)=- \frac{1}{k} \mathbf{E}\log \kappa _{k}^{\ast }(G,o).
\end{equation*}%
This completes the proof of the first claim of the theorem.
To prove the second claim, take expectation of the logarithm of the result of Lemma \ref{l:sharp}
and use the bounded convergence theorem. \end{proof}

\section{Graphs with uniformly dense short cycles\label%
{section_dense}}

In this section we prove Theorem \ref{distance}. This part of the paper is
independent of the rest as it does not use any of the results in the rest
and vice versa. Theorem \ref{distance} immediately implies that vertex
transitive Ramanujan graphs are trees; the proof for that \cite{paschke} is to
first show that every vertex transitive graph that is not a tree can be
covered by a Cayley graph that is also not a tree, and then use the original
Kesten's theorem. The proof presented here is purely combinatorial. It seems
tempting to try to prove Theorem \ref{t:main} using this method, but we did not
manage to do so. \bigskip

\begin{proof}[Proof of Theorem \ref{distance}.] Let $G$ be an infinite $%
d $-regular graph such that every vertex in $G$ has distance at most $R$
from a $k$-cycle. For a vertex $x\in G$ let $N(x)$ be the list of endpoints
of edges starting at $x$. For $n\geq 0$ let
\begin{equation*}
g(n)=\frac{d+(d-2)n}{d\sqrt{d-1}^{n}}
\end{equation*}%
Then $g(0)=1$ and for $n>0$ we have
\begin{equation*}
\frac{1}{d}\left( g(n-1)+(d-1)g(n+1)\right) =\frac{2\sqrt{d-1}}{d}g(n)
\end{equation*}%
Also, for $n\geq 0$ the function is monotonically decreasing, as

\begin{equation}
\frac{1}{\sqrt{d-1}}<\frac{g(n+1)}{g(n)}\leq \frac{2\sqrt{d-1}}{d}=\frac{g(1)%
}{g(0)}<1  \label{hanyados}
\end{equation}%
This is the spherical function that demonstrates $\rho (T_{d})\geq 2\sqrt{d-1%
}/d$.

Fix $o\in G$ forever. For $r\geq 0$ let
\begin{equation*}
S_{r}=\left\{ x\in G\mid d(o,x)=r\right\}
\end{equation*}
and for abbreviation let us denote $g_{r}=g(r)$.

For $x\in S_{r}$ let
\begin{equation*}
\deg ^{+}(x)=\left| N(x)\cap S_{r+1}\right| \text{, }\deg ^{0}(x)=\left|
N(x)\cap S_{r}\right| \text{ and }\deg ^{-}(x)=\left| N(x)\cap
S_{r-1}\right| .
\end{equation*}

Let the set of $\emph{return}$ $\emph{points}$ be defined as
\begin{equation*}
A=\left\{ x\in G\mid \deg ^{-}(x)+\deg ^{0}(x)\geq 2\right\}
\end{equation*}%
Let $k^{\prime
}=\left\lfloor R+k/2+1\right\rfloor $. By the assumption of the Theorem, the $%
k^{\prime }$-neighborhood of $A$ equals the whole $G$.

Let $R>0$ (this will tend to infinity later). Let us define $%
f_{R}:G\rightarrow \mathbb{R}$ as follows:
\begin{equation*}
f_{R}(x)=\left\{
\begin{array}{cc}
g(d(o,x)) & \text{if }d(o,x)\leq R \\
0 & \text{otherwise}%
\end{array}%
\right.
\end{equation*}%
Then $f_{R}\in l^{2}(G)$ and we have $\left\langle f_{R},f_{R}\right\rangle
=\sum_{r=0}^{R}\left\vert S_{r}\right\vert g_{r}^{2}$.

Let $x\in G$ and let $r=d(o,x)$.

If $r<R$ and $x\notin A$, then
\begin{equation*}
Mf_{R}(x)=\frac{1}{d}\left( g_{r-1}+(d-1)g_{r+1}\right) =\frac{2\sqrt{d-1}}{d%
}g_{r}
\end{equation*}%
otherwise
\begin{equation*}
Mf_{R}(x)=\frac{1}{d}\left( \deg ^{-}(x)g_{r-1}+\deg ^{0}(x)g_{r}+\deg
^{+}(x)g_{r+1}\right) \geq \frac{2\sqrt{d-1}}{d}g_{r}+\frac{1}{d}\left(
g_{r}-g_{r+1}\right) \text{.}
\end{equation*}

If $r=R$ then
\begin{equation*}
Mf_{R}(x)\geq \frac{1}{d}g_{R-1}\geq \frac{1}{d}g_{R}
\end{equation*}%
Using
\begin{equation*}
g_{r}-g_{r+1}\geq g_{r}(1-\frac{2\sqrt{d-1}}{d})=\frac{d-2\sqrt{d-1}}{d}g_{r}
\end{equation*}%
this gives us
\begin{eqnarray*}
\left\langle Mf_{R},f_{R}\right\rangle &\geq &\frac{2\sqrt{d-1}}{d}%
\sum_{r=0}^{R-1}\left\vert S_{r}\right\vert g_{r}^{2}+ \\
&&+\frac{d-2\sqrt{d-1}}{d^{2}}\sum_{r=0}^{R-1}\left\vert S_{r}\cap
A\right\vert g_{r}^{2}+\frac{1}{d}\left\vert S_{r}\right\vert g_{R}^{2}= \\
&=&\frac{2\sqrt{d-1}}{d}\sum_{r=0}^{R}\left\vert S_{r}\right\vert g_{r}^{2}+%
\frac{d-2\sqrt{d-1}}{d^{2}}\sum_{r=0}^{R-1}\left\vert S_{r}\cap A\right\vert
g_{r}^{2}- \\
&&-\frac{2\sqrt{d-1}-1}{d}\left\vert S_{R}\right\vert g_{R}^{2}
\end{eqnarray*}%
For each $x\in G$ let $a(x)\in A$ be a closest vertex in $A$. Then $%
d(x,a(x))\leq k^{\prime }$ and so evenly distributing the weight $%
g^{2}(d(o,a))$ on $a$ to all $x\in G$ with $a(x)=a$, we get
\begin{equation*}
\sum_{r=0}^{R-1}\left\vert S_{r}\cap A\right\vert g_{r}^{2}=\sum_{x\in A%
\text{, }d(o,x)\leq R-1}g^{2}(d(o,x))\geq \frac{1}{B}\sum_{x\in G\text{, }%
d(o,x)\leq R-(k^{\prime }+1)}g^{2}(d(o,a(x)))
\end{equation*}%
where $B=d((d-1)^{k^{\prime }}-1)/(d-2)$ is the size of the $k^{\prime }$%
-ball in $T_{d}$. On the other hand, \ref{hanyados}) implies
\begin{equation*}
\frac{g^{2}(d(o,a(x)))}{g^{2}(d(o,x))}>\frac{1}{(d-1)^{d(x,a(x))}}\geq \frac{%
1}{(d-1)^{k^{\prime }}}
\end{equation*}%
and so we get
\begin{equation*}
\sum_{r=0}^{R-1}\left\vert S_{r}\cap A\right\vert g_{r}^{2}>\frac{1}{%
B(d-1)^{k^{\prime }}}\sum_{r=0}^{R-(k^{\prime }+1)}\left\vert
S_{r}\right\vert g_{r}^{2}
\end{equation*}%
Putting together and trivially estimating $B$, we get
\begin{eqnarray*}
\frac{\left\langle Mf_{R},f_{R}\right\rangle }{\left\langle
f_{R},f_{R}\right\rangle } &>&\left( \frac{2\sqrt{d-1}}{d}+\frac{d-2}{%
d(d-1)^{2k^{\prime }}}\right) - \\
&&-\frac{C\sum_{r=R-k^{\prime }}^{R}\left\vert S_{r}\right\vert g_{r}^{2}}{%
\sum_{r=0}^{R}\left\vert S_{r}\right\vert g_{r}^{2}}
\end{eqnarray*}

where $C$ is an absolute constant. We get the required estimate if we show
that
\begin{equation*}
\lim_{R\rightarrow \infty }\frac{\left\vert S_{R}\right\vert g_{R}^{2}}{%
\sum_{r=0}^{R}\left\vert S_{r}\right\vert g_{r}^{2}}=0
\end{equation*}%
For $r\geq 0$ let $s_{r}=\left\vert S_{r}\right\vert /(d-1)^{r}$. Then
trivially $s_{r}\geq s_{r+1}$ and
\begin{equation*}
\left\vert S_{r}\right\vert g_{r}^{2}=\frac{1}{d^{2}}s_{r}(d+(d-2)r)^{2}
\end{equation*}%
thus we get
\begin{equation*}
\sum_{r=0}^{R}\left\vert S_{r}\right\vert g_{r}^{2}\geq \frac{1}{d^{2}}%
s_{R}\sum_{r=0}^{R}(d+(d-2)r)^{2}
\end{equation*}%
This gives us
\begin{equation*}
\frac{\sum_{r=0}^{R}\left\vert S_{r}\right\vert g_{r}^{2}}{\left\vert
S_{R}\right\vert g_{R}^{2}}\geq \frac{\sum_{r=0}^{R}(d+(d-2)r)^{2}}{%
(d+(d-2)R)^{2}}
\end{equation*}%
which tends to infinity with $R$. The theorem is proved. \end{proof}

\section{Examples of Ramanujan graphs}
\label{s:fin and infite rg}

\subsection{Tolerance of loops in Ramanujan graphs}

In this section we build examples of finite and infinite Ramanujan graphs
with some loops. It turns out that for infinite trees, there is a tolerance
phenomenon; the tree lets us insert some loops before giving up being
Ramanujan.

Recall that a Cayley graph of a group $G$ together with a finite set of generators $S=S^{-1}$
is the graph with vertex set $G$ and edge set $\{\{v,vs\}, s\in S\}$.
Our first result shows that every Cayley graph sequence that is Ramanujan
gives rise to another Ramanujan sequence with loops.

\begin{theorem}
\label{cayley}Let $G_{n}$ be an expander sequence of finite $d$-regular
Cayley graphs with $\left\vert G_{n}\right\vert \rightarrow \infty $. Then
there exists $H_{n}$ with $\left\vert H_{n}\right\vert \rightarrow \infty $
such that for all $n$, $H_{n}$ contains a loop and $G_{n}$ covers $H_{n}$.
In particular, $\rho (H_{n})\leq \rho (G_{n})$.
\end{theorem}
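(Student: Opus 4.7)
My plan is to construct $H_n$ as a Schreier coset graph of $\Gamma_n$ by a cyclic subgroup generated by one of the generators. For each $n$, pick any generator $s_n \in S_n$, let $\Lambda_n = \langle s_n \rangle$, and set $H_n = \Lambda_n \backslash \Gamma_n$, with vertex set the left cosets $\Lambda_n g$ and, for every $s \in S_n$, a directed edge from $\Lambda_n g$ to $\Lambda_n g s$ (the reverse edge coming from $s^{-1}$, as in the paper's conventions).

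The quotient map $\pi: g \mapsto \Lambda_n g$ is a graph covering: at each vertex $g \in G_n$, the $d$ edges $\{(g, gs) : s \in S_n\}$ project bijectively onto the $d$ edges at $\Lambda_n g \in H_n$, because left multiplication by $\Lambda_n$ commutes with right multiplication by $S_n$. By construction $H_n$ carries a loop at the coset $\Lambda_n$ itself, since the $s_n$-edge there ends at $\Lambda_n s_n = \Lambda_n$. The inequality $\rho(H_n) \leq \rho(G_n)$ is a standard consequence of the covering: any eigenfunction of the Markov operator on $H_n$ with eigenvalue $\lambda$ pulls back via $\pi$ to an eigenfunction on $G_n$ with the same eigenvalue, and since $H_n$ is not bipartite (it has a loop, hence an odd cycle), its only trivial eigenvalue $1$ lifts to the trivial eigenvalue $1$ on $G_n$, so the nontrivial spectrum of $H_n$ embeds into the nontrivial spectrum of $G_n$.

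The only nontrivial point is to ensure $|H_n| = [\Gamma_n : \langle s_n \rangle]$ tends to infinity, i.e.\ that we can choose generators with $|s_n| = o(|\Gamma_n|)$. This is where I expect the main obstacle, and it is forced by the expander hypothesis. If, along some subsequence, every generator had order $\geq c|\Gamma_n|$, then $\langle s_n \rangle$ would be a cyclic subgroup of bounded index $\leq 1/c$, and passing to its normal core would exhibit $\Gamma_n$ as virtually cyclic with a bounded-order quotient. Such groups are virtually nilpotent of polynomial growth, so by Gromov's theorem any bounded-degree Cayley graph has diameter $\Omega(|\Gamma_n|^{1/D})$ for some fixed $D = D(c,d)$. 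This grows superlogarithmically and contradicts the $O(\log|\Gamma_n|)$ diameter bound forced by expansion via Cheeger's inequality; hence for large $n$ every choice of $s_n \in S_n$ already satisfies $|s_n| = o(|\Gamma_n|)$ and the construction yields $|H_n| \to \infty$.
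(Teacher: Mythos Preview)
Your construction of $H_n$ as the Schreier coset graph $\langle s_n\rangle\backslash\Gamma_n$ is exactly the paper's: in its notation $H_n=\mathrm{Sch}(F/\langle K_n,s\rangle,S)$, which is precisely the coset graph of $\langle s\rangle$ in $\Gamma_n=F/K_n$. The covering argument and the pullback of eigenfunctions are likewise the same, and your remark that $H_n$ is non-bipartite (so $-1$ is never a trivial eigenvalue to worry about) is a nice explicit touch the paper leaves implicit.

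Where you diverge is in showing $|H_n|\to\infty$. The paper fixes a single $s\in S$ and argues by a limit: if $[F:\langle K_n,s\rangle]$ stays bounded along a subsequence, then (since $F$ has only finitely many subgroups of each index) one may assume $\langle K_n,s\rangle=F_0$ is constant, so $N=\bigcap K_n\le F_0$ with $F_0/N$ abelian; thus $F/N$ is amenable, and any Benjamini--Schramm limit of the $G_n$ is a Cayley graph of a quotient of $F/N$, whose F{\o}lner sets contradict expansion. Your route via diameter is a genuinely different, more quantitative alternative.

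There is, however, a gap in your justification. Invoking ``Gromov's theorem'' is misplaced: Gromov's theorem is the converse (polynomial growth $\Rightarrow$ virtually nilpotent) and concerns infinite groups. What you actually need is a \emph{uniform} forward statement for the finite groups $\Gamma_n$: if $\Gamma$ has a normal cyclic subgroup $N$ of index $\le M$ and generating set of size $d$, then $|B_r|\le C(M,d)\,r^{D(M,d)}$. This is elementary: writing $N=\langle z\rangle$ and each $g$ as $z^{a(g)}\sigma(\bar g)$ with $\sigma$ a section of $\Gamma\to Q=\Gamma/N$, the exponent $a(s_{i_1}\cdots s_{i_r})$ is a sum of $r$ terms, each determined by the pair $(\overline{s_{i_1}\cdots s_{i_{j-1}}},s_{i_j})\in Q\times S$, hence drawn from a fixed set of size $\le Md$; the number of such sums is at most $\binom{r+Md-1}{Md-1}$, polynomial in $r$. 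With this in hand your diameter bound $\Omega(|\Gamma_n|^{1/D})$ and the contradiction with the $O(\log|\Gamma_n|)$ expander diameter go through. So your strategy is sound once this lemma replaces the Gromov reference; it trades the paper's soft compactness/amenability step for a direct growth estimate.
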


\begin{proof} Let $F$ be the free group with the alphabet $S$
and let $K_{n}$ be the normal subgroup in $F$ such that $G_{n}=\mathrm{Cay}%
(K_{n} \backslash F,S)$. Let $s\in S$ and let $F_{n}=\left\langle K_{n},s\right\rangle $
be the subgroup generated by $K_{n}$ and $s$. Let $H_{n}=\mathrm{Sch}%
(F_{n}\backslash F,S)$. Then the map between coset spaces $K_{n}g\mapsto F_{n}g$ is a
covering map from $G_{n}$ to $H_{n}$, since $F_{n}$ contains $K_{n}$. Every
eigenvector of $H_{n}$ can be pulled back to be an eigenvector of $G_{n}$,
which implies $\rho (H_{n})\leq \rho (G_{n})$. Also, $F_{n}s=F_{n}$, so $%
H_{n}$ contains a loop.

Assume now that when passing to a subsequence, $H_{n}$ has bounded size. Let
$N$ be the intersection of the $K_{n}$. Since $F$ has only finitely many
subgroups of a given index, $N \langle s \rangle$ has finite index in $F$.
Thus
$N\backslash F$ has a cyclic subgroup
of finite index,
hence it is amenable. Now a subsequence of the $G_{n}$ locally converges to
an infinite Cayley graph $G$ and $G$ is a quotient of $\mathrm{Cay}%
(N \backslash F,S)$%
, hence it is amenable as well. But then $G$ has a F\o lner
sequence, which then can be also found in the finite sequence. This implies
that $G_{n}$ is not an expander family, a contradiction. So $\left\vert
H_{n}\right\vert \rightarrow \infty $ as claimed. \end{proof}

Note that this proof only guarantees \emph{one} loop in $H_{n}$. The
known Lubotzky-Philips-Sarnak construction does not allow us to create two loops by factoring out
with two generators. For infinite graphs, the picture is very different.

\subsection{Infinite Ramanujan graphs are abundant }

Unlike finite Ramanujan graphs which are notoriously difficult to construct infinite Ramanujan graphs are abundant. In fact let $G$ be any graph whose degrees are bounded by $m$. There is a unique way of embedding $G$ into an $m$-regular graph $Y := \operatorname{Tree}_m(G)$ in such a way that the embedding $\iota: G \arrow Y$ induces an isomorphism on fundamental groups. In fact the graph $Y$ is constructed by ``gluing trees at every vertex'' in the unique possible way that would make the resulting graph $m$-regular.

Now fix a base vertex $o \in G \subset Y$ and let $W_n^Y(o,o)$ (resp, $V_n^Y(o,o)$) be the sets of $n$-cycles (resp, non-backtracking cycles) on the graph $Y$. The asymptotic of these are governed by the spectral radius $\rho(Y) = \frac{1}{m} \limsup_{n \arrow \infty} \left| W_n^Y(o,o)\right|^{1/n}$ and the co-growth $\alpha = \alpha(Y) = \limsup_{n \arrow \infty} \left|V_n^Y (o,o) \right|^{1/n}$. Now Grigorchuk's famous co-growth formula relates these two numbers by the following formula:
$$\rho = \left \{
   \begin{array}{lll}
      \frac{\sqrt{m-1}}{m} \left( \frac{\alpha}{\sqrt{m-1}} + \frac{\sqrt{m-1}}{\alpha} \right) & \qquad & {\text{if }} \alpha > \sqrt{m-1} \\
      \frac{2 \sqrt{m-1}}{m} & \qquad & {\text{otherwise }}
    \end{array} \right. .
$$
This formula is obtained by comparing the radii of convergence of the generating functions corresponding to these two types of random walks, see \cite[Equation 2.3]{OrtnerWoess}. This equation also plays a central role in our proof of Proposition \ref{p:visits}.

\begin{corollary} \label{inf_ram_criterion}\label{c:inf_ram_criterion}
Let $G$ be a graph with maximal degree bounded by $m$. Then $\operatorname{Tree}_m(G)$ is Ramanujan if and only if $m \ge \alpha^2(G)+1$. In particular if $G$ is $d$-regular then $\operatorname{Tree}_m(G)$ is Ramanujan whenever $m \ge d^2 -2d + 2$.
\end{corollary}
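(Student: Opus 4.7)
The plan is to apply Grigorchuk's co-growth formula (stated immediately above the corollary) directly to $Y := \operatorname{Tree}_m(G)$. The only non-algebraic content is to check that the co-growth $\alpha(Y)$ equals the $\alpha(G)$ that appears in the statement.

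First I would argue that $\alpha(Y)=\alpha(G)$. The graph $Y$ is obtained from $G$ by grafting a tree at each vertex $v$ with $\deg_G(v)<m$, along a set of distinguished ``grafting edges''. Fix $o\in G\subset Y$ and consider any non-backtracking closed walk $w$ in $Y$ based at $o$. If $w$ ever enters one of the grafted trees through some edge $e$ with $e^-\in G$ and $e^+\notin G$, then because each grafted component is a tree and $w$ must eventually return to $o\in G$, the walk is forced to cross $\overline{e}$ at some later step, and the first such crossing produces a backtracking. Hence every non-backtracking closed walk in $Y$ based at $o$ stays inside $G$, so $V_n^Y(o,o)=V_n^G(o,o)$ for every $n$, and the exponential growth rates coincide.

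Next I would plug $\alpha:=\alpha(G)=\alpha(Y)$ into Grigorchuk's formula for $\rho(Y)$. Writing $x=\alpha/\sqrt{m-1}$, the formula gives $\rho(Y)=\rho(T_m)\cdot (x+x^{-1})/2$ when $x>1$ and $\rho(Y)=\rho(T_m)$ when $x\le 1$. Since $x+x^{-1}>2$ strictly for $x>1$, we conclude
\[
\rho(Y)\le \rho(T_m) \quad\Longleftrightarrow\quad \alpha\le\sqrt{m-1} \quad\Longleftrightarrow\quad m\ge \alpha^2+1,
\]
which is the first assertion of the corollary (and shows that the inequality $\rho(Y)\le\rho(T_m)$ is in fact an equality whenever it holds).

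For the $d$-regular specialization I would use the trivial counting bound $|V_n^G(o,o)|\le d(d-1)^{n-1}$: a non-backtracking walk has at most $d$ choices for its first step and at most $d-1$ at each subsequent step. Taking $n$-th roots gives $\alpha(G)\le d-1$, so whenever $m\ge (d-1)^2+1=d^2-2d+2$ we have $m\ge \alpha(G)^2+1$, and Ramanujanness of $\operatorname{Tree}_m(G)$ follows from the first part. The only substantive step here is the tree-grafting observation that matches the two co-growths; everything else is mechanical algebra applied to Grigorchuk's formula.
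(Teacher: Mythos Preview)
Your proof is correct and follows the same approach as the paper's own argument: identify $\alpha(Y)$ with $\alpha(G)$, read off the Ramanujan criterion from Grigorchuk's co-growth formula, and use the trivial bound $\alpha(G)\le d-1$ in the $d$-regular case. The paper simply asserts ``Clearly $\alpha(G)=\alpha(Y)$''; you have supplied the missing justification. One small phrasing issue: saying ``the first such crossing produces a backtracking'' is slightly imprecise---what is true is that removing $e$ disconnects $Y$, so the segment of $w$ between the traversal of $e$ and the first subsequent traversal of $\overline{e}$ is a closed walk (from $e^+$ to $e^+$) in a tree, and any such walk of positive length must backtrack (while length zero gives the backtrack $e\overline{e}$ directly). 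This is clearly what you intend, so it is not a genuine gap.
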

\begin{proof}
Clearly $\alpha(G) = \alpha(Y)$. The first statement follows, since by definition the graph $Y = \operatorname{Tree}_m(G)$ is Ramanujan if and only if it falls into the second clause of the above formula. The second statement follows since $\alpha(G) \le d-1$ for any $d$-regular graph.
\end{proof}

An open question of Itai Benjamini (private communication) asks
whether there exist infinite Ramanujan graphs where all
bounded harmonic functions are constant. This calls for different examples.

\section{A unimodular random graph of maximal growth}\label{s:growth}

For a rooted graph $G$ let $S_n$ denote the vertices at
distance $n$ from the root. Let
$$
\lgr G = \liminf_{n\to \infty} |S_n|^{1/n}.
$$
Clearly, for every $d$-regular graph $\lgr G\le d-1$, and
$\lgr \T_d=d-1$. The goal of this section is to prove
Theorem \ref{growth} from the introduction, namely to exhibit a $d$-regular unimodular
random graph $G$ different from $\T_d$ where $\lgr G=\lgr
\T_d=d-1$ almost surely.

For this, we consider site percolation on $\mathbb Z^2$,
namely a random induced subgraph where every vertex is
present with probability $p$ and absent with probability
$1-p$, independently. For $p$ large, the connected
component of the origin is infinite with positive
probability. Let $\overline \cluster$ denote the
distribution of the universal cover of the cluster given
that it is infinite; this is a tree with degree bounded by
$4$, but is not $4$-regular. It can be made $4$-regular
by adding loops.

\begin{theorem}\label{t:z2cluster}
The rooted random graph $\overline \cluster$ is a
unimodular random graph satisfying  $\lgr \overline \cluster =3$
with probability 1.
\end{theorem}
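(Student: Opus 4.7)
The plan is to prove unimodularity of $\overline{\cluster}$ and the almost-sure equality $\lgr\overline{\cluster}=d-1$ separately. For unimodularity I would proceed in three steps. First, the infinite cluster $\cluster$ of iid Bernoulli site percolation on $\mathbb{Z}^2$, rooted at the origin conditional on $\{o\in\cluster\}$, is a unimodular random rooted graph; this is a standard application of the Mass Transport Principle on the unimodular amenable Cayley graph $\mathbb{Z}^2$ combined with the translation invariance of iid percolation. Second, the universal cover functor preserves unimodularity: given any URG $(G,o)$ together with a consistent choice of root lift $\tilde o$ in the cover $\widetilde G$, a nonnegative isomorphism invariant transport on triples $(\widetilde G,\tilde x,\tilde y)$ pulls back to a transport on $(G,\pi(\tilde x),\pi(\tilde y))$ by summing over the fibres of the covering map $\pi$ (using that deck transformations act by isomorphisms), and mass transport on the base then yields the identity on the cover. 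Third, attaching the appropriate number of half loops at each vertex of the covering tree to bring its degree up to $d$ is a vertex local and isomorphism invariant operation, so it preserves the URG property.

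For the growth equality, the upper bound $\lgr\overline{\cluster}\le d-1$ is immediate since half loops add no new vertices and so the $n$-sphere in $\overline{\cluster}$ coincides with the $n$-sphere in the underlying covering tree, a graph of maximum degree $d$. The lower bound is the core of the theorem. Because $\overline{\cluster}$ is a tree, $|S_{\overline{\cluster}}(n)|$ equals the number of non-backtracking walks of length $n$ in $\cluster$ starting at $o$, since each such walk lifts to a unique vertex of the cover at distance $n$ from the root. I would lower bound this count by exploiting the renormalization structure of supercritical site percolation on $\mathbb{Z}^2$: by Pisztora--Grimmett type block arguments, $\cluster$ coarse grains to an iid site percolation of density arbitrarily close to one on a block lattice isomorphic to $\mathbb{Z}^2$, and this produces regions throughout $\cluster$ where the non-backtracking branching factor in the cluster is uniformly close to the maximum $d-1$.

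The main obstacle is precisely this lower bound. Because $\lgr$ is a liminf rather than a limsup, one must rule out arbitrarily large exceptional scales at which the cluster is locally thin near $o$; this requires combining the renormalization above with a Borel--Cantelli style argument ensuring that, almost surely, the density of good block configurations within $o(n)$ of the root exceeds $1-\eps$ for every sufficiently large $n$, so that a typical non-backtracking walk of length $n$ sees average branching $d-1-o(1)$ per step. The delicate point is converting a statement of the form ``good blocks occur with positive density in $\mathbb{Z}^2$'' into ``for every sufficiently large scale $n$ the walk near the origin sees essentially only good blocks'', which is what makes the $\liminf$ direction (as opposed to the $\limsup$ direction) the subtle one.
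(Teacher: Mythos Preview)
Your unimodularity argument is essentially the paper's: the cluster is a URG, universal covering preserves unimodularity, and the local decoration by half-loops is harmless. No issue there.

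The growth lower bound, however, has a genuine gap. Renormalization \`a la Pisztora tells you that good blocks (blocks containing a unique crossing cluster) occur with density close to one, but a good block in this sense does \emph{not} have non-backtracking branching factor close to $d-1=3$ inside it. Indeed, for site percolation at any $p<1$ the density of degree-$4$ vertices in the infinite cluster is strictly less than one, so the ``average out-degree'' seen by a non-backtracking walk is bounded away from $3$ uniformly in $n$. An argument of the form ``good blocks are dense, hence the walk branches at rate $d-1-o(1)$'' would, if correct, prove $\lgr\overline\cluster=d-1$ for every $p>p_c$ by a first-moment count, and that reasoning simply fails: the growth of the universal cover is governed not by the average degree in the base but by its cycle structure (equivalently, by cogrowth), which is a much more delicate quantity.

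The paper circumvents this by an indirect route. It first shows that $\cluster$ a.s.\ contains fully occupied squares of side $\ge a\log r$ within chemical distance $r$ of the origin (Borel--Cantelli plus an Antal--Pisztora chemical-distance input). A simple random walk can reach such a square in $r$ steps with probability $e^{-O(r)}$ and then stay inside it for time $r\log r$ with probability $e^{-o(r)}$, so the probability that SRW on $\mathbb Z^2$ remains in $\cluster$ for $n$ steps decays subexponentially in $n$. Finally, the paper converts this into the non-backtracking count via loop-erasure: erasing backtracks from SRW gives NBRW run to a random time $N_n$ with $N_n/n\to 1-2/d$ and exponentially small lower tail, and since $\pr(A_m)=s_m/(d(d-1)^{m-1})$ for NBRW, subexponential SRW survival forces $s_n^{1/n}\to d-1$. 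The point is that a large \emph{fully occupied} square, though it occurs only on logarithmic scales, already has cogrowth $3$; your block argument produces regions that are merely well-connected, which is not enough.
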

The following lemma follows from the
definition of unimodular random graphs.
\begin{lemma} The universal cover of a unimodular random graph
is a unimodular random graph.
\end{lemma}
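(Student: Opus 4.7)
The plan is to verify the Mass Transport Principle for $\tilde G$ directly, by pulling back an arbitrary mass transport on the cover to one on the base and invoking the MTP for $G$. Let $\pi\colon\tilde G\to G$ be the universal covering map, $\tilde o$ a chosen lift of $o$ serving as the root of $\tilde G$, and $\Gamma=\pi_1(G,o)$ the deck transformation group, which acts freely and transitively on each fiber $\pi^{-1}(v)$ and whose elements are automorphisms of the (unrooted) tree $\tilde G$.

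Given a non-negative function $f(T,\tilde x,\tilde y)$ on rooted trees with two marked vertices that does not depend on the location of the root, I would define the pullback mass transport on $G$ by
\[
F(G,x,y)=\sum_{\tilde y'\in\pi^{-1}(y)} f(\tilde G,\tilde x,\tilde y'),
\]
where $\tilde x$ is any lift of $x$. Replacing $\tilde x$ by $\gamma\tilde x$ amounts to applying the automorphism $\gamma^{-1}$, which permutes the summation range $\pi^{-1}(y)$, so $F$ does not depend on the choice of $\tilde x$ and is a legitimate root-invariant mass transport on the random graph $G$.

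The computation then proceeds in two steps. On the sending side, reindexing gives
\[
\sum_{y\in G} F(G,o,y)=\sum_{\tilde y\in\tilde G} f(\tilde G,\tilde o,\tilde y).
\]
On the receiving side, write $\tilde o'=\gamma\tilde o$ and use the root-invariance of $f$ together with $\gamma$ being a graph automorphism to get $f(\tilde G,\tilde x,\gamma\tilde o)=f(\tilde G,\gamma^{-1}\tilde x,\tilde o)$; since $\gamma\mapsto\gamma^{-1}\tilde x$ is a bijection between $\Gamma$ and $\pi^{-1}(x)$,
\[
\sum_{x\in G} F(G,x,o)=\sum_{x\in G}\sum_{\tilde x'\in\pi^{-1}(x)}f(\tilde G,\tilde x',\tilde o)=\sum_{\tilde x\in\tilde G} f(\tilde G,\tilde x,\tilde o).
\]
Taking expectations over the random graph $G$ and applying the MTP for the unimodular random graph $G$ then equates the two expected sums, which is exactly the MTP for $\tilde G$.

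The only delicate point I foresee is the double-sum bookkeeping on the receiving side, which rests on the fact that each $\pi^{-1}(v)$ is a single free $\Gamma$-orbit and that $f$ is invariant under both root relocation and tree automorphisms. This is standard covering-space combinatorics, so the lemma is indeed as immediate as the excerpt advertises; the same argument applies verbatim to the $d$-regular cover obtained by adding loops to the degree-deficient vertices, which is the version needed for Theorem \ref{t:z2cluster}.
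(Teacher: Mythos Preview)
Your argument is correct: pulling back an arbitrary transport $f$ on the cover to $F(G,x,y)=\sum_{\tilde y'\in\pi^{-1}(y)}f(\tilde G,\tilde x,\tilde y')$ is well-defined by the free transitive deck action, and the two reindexing steps you wrote out yield the MTP for $\tilde G$ from the MTP for $G$. The only unstated point is that $F$ is an isomorphism-invariant (and Borel) function of the doubly-rooted base graph, which follows because a base isomorphism lifts to a cover isomorphism carrying fibers to fibers; this is routine.

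The paper gives no proof at all, merely asserting that the lemma ``follows from the definition of unimodular random graphs.'' Given the paper's chosen definition via flags (Section~\ref{section_preliminaries}), the intended one-line argument is presumably that the universal-cover map $(G,o,e)\mapsto(\tilde G,\tilde o,\tilde e)$ commutes with flag inversion, since covering maps are local isomorphisms; hence invariance of the flagged law downstairs immediately gives invariance upstairs. Your MTP route is equivalent and equally short once one accepts the MTP characterization of unimodularity, and it has the minor advantage of not being tied to the $d$-regular setting. Your closing remark about adding loops afterwards is also correct: that is a root-equivariant local modification and preserves unimodularity for the same reason.
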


Let $\cluster $ be a connected, induced subgraph of $\mathbb Z^2$,
and let $b_r$ be the size of the largest square fully contained in
$\cluster$ whose center is at distance at most $r$ in $\cluster $ from a fixed vertex.
Fix $a>0$, and consider the following property of $\cluster $
\begin{equation}\label{cluster property}
\liminf_{r\to\infty} \frac{b_r}{\log r} \ge a.
\end{equation}
It is clear that this property does not depend on the fixed
vertex.
Whether the infinite cluster in supercritical
percolation  has
this property is a tail event, so it has probability 0 or 1, although we will not use this.
We will argue for the latter.

\begin{lemma}
There is $a=a(p)$ so that the supercritical percolation cluster $\cluster$ satisfies
property \eqref{cluster property} with probability 1.
\end{lemma}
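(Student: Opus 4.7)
The plan is to combine Antal--Pisztora's linear comparison of graph and Euclidean distances in the supercritical cluster with a block counting argument for fully open Euclidean squares. Interpreting the ``size'' of a square as its number of sites, we aim to find fully open $s_r \times s_r$ squares with $s_r \asymp \sqrt{\log r}$ lying in the infinite cluster and within graph distance $r$ of the basepoint.

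First I would invoke Antal--Pisztora to obtain a constant $C=C(p)$ such that, almost surely on $\{0 \in \cluster\}$, for all sufficiently large $r$ every $v \in \cluster$ with $\|v\|_\infty \le r/C$ satisfies $d_{\cluster}(0,v) \le r$. It then suffices to exhibit a fully open $s_r \times s_r$ square in the infinite cluster whose center lies within Euclidean distance $r/C$ of $0$.

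Fix $a \in (0, 2/\log(1/p))$, set $s = s_r := \lceil \sqrt{a \log r}\,\rceil$, and tile the $\ell^\infty$-ball of radius $r/C$ into disjoint $s \times s$ squares $Q_1, \ldots, Q_{N_r}$ with $N_r \asymp r^2/\log r$. The events $F_i = \{Q_i \text{ fully open}\}$ are independent Bernoulli of probability $p^{s^2} \ge r^{-a\log(1/p)}$, so $\ev \sum_i \one_{F_i} \asymp r^{2 - a\log(1/p)}/\log r$ grows polynomially in $r$. A second moment calculation then gives $\pr(\sum_i \one_{F_i} \le \tfrac12 \ev \sum_i \one_{F_i}) = O(r^{-c})$ for some $c=c(p,a)>0$, and Borel--Cantelli along $r_k = 2^k$ yields that almost surely, for all large $k$, polynomially many of the $Q_i$ are fully open.

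To upgrade ``fully open'' to ``in the infinite cluster'', note that if a fully open $Q_i$ lies in a finite cluster then that cluster has size at least $s^2 = \Theta(\log r)$; by Grimmett's exponential decay of finite clusters in supercritical $2$D percolation, the probability that any given vertex lies in a finite cluster of size $\ge s^2$ is at most $C_0\exp(-s^2/\xi(p))$. Provided $p$ is chosen with $1/\xi(p) > \log(1/p)$ (the case $p$ near $p_c$ is reduced to this one by a standard block renormalization that replaces sites by well-chosen macroscopic good boxes), a union bound over the $O(r^2)$ candidate vertices kills the finite-cluster alternative summably along $r_k$, so almost surely for all large $k$ at least one fully open $Q_i$ lies in the infinite cluster. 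Together with Antal--Pisztora this gives $b_{r_k} \ge s_{r_k}^2 \ge a\log r_k - 1$, and monotonicity of $r \mapsto b_r$ extends the bound to all $r$, yielding $\liminf_r b_r/\log r \ge a$ almost surely. The main technical obstacle is in this final step: the events ``$Q_i \subset \text{infinite cluster}$'' are strongly coupled across $i$, so direct independence fails and one genuinely needs Grimmett's quantitative exponential decay (or a renormalization argument) to rule out their joint failure.
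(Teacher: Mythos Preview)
Your overall strategy---Antal--Pisztora for the distance comparison, plus a Borel--Cantelli search for fully open $s\times s$ squares with $s=\lceil\sqrt{a\log r}\rceil$---is sound, but the final step contains a real error. You assert that in supercritical two-dimensional percolation
\[
\pr\big(s^2\le |\cluster|<\infty\big)\le C_0\,e^{-s^2/\xi(p)}.
\]
This is false: the Kesten--Zhang large-deviation rate for the volume of a finite supercritical cluster in $\mathbb Z^d$ is surface-order, namely $\pr(n\le |\cluster|<\infty)\le e^{-c\,n^{(d-1)/d}}$, which in $d=2$ gives only $e^{-c\sqrt{n}}=e^{-cs}$. (The linear-in-$n$ exponent you quote holds for the \emph{radius} of a finite cluster, or for the volume in the \emph{subcritical} regime, but not here.) With the correct rate your union bound over the $O(r^2)$ candidate sites yields $r^2 e^{-c\sqrt{a\log r}}\to\infty$, so it does not close; and block renormalization to large effective $p$ cannot help, since the $e^{-c\sqrt{n}}$ behaviour is intrinsic to two dimensions and persists for every $p\in(p_c,1)$. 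Thus the step ``at least one fully open $Q_i$ lies in the infinite cluster'' is unproved.

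The paper sidesteps this difficulty by a two-round exposure (sprinkling): write $p=p'+p''-p'p''$ with $p'>p_c$, expose the $p'$-configuration first to obtain the infinite cluster $\cluster'$ with its Antal--Pisztora distance control, and then use the \emph{independent} $p''$-bits to plant fully $p''$-open squares adjacent to the dense set $\cluster'$. Because the ``fully open square'' event now lives in fresh randomness, attachment to the infinite cluster is automatic and no finite-cluster large-deviation input is needed. Your route can be repaired---for instance, place the candidate squares at mutual separation $r^\alpha$ and ask for the \emph{local} event that each is fully open and connected to the boundary of its own box of side $r^\alpha/3$; these events are independent, each has probability at least $\theta(p)\,p^{s^2}$ by FKG, and the radius-based bound $e^{-cr^\alpha}$ then rules out finite clusters summably---but as written the key decay estimate is incorrect and the argument has a gap.
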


\begin{proof}
The fact that the set of open vertices in a percolation
cluster with $p>0$ satisfies
this property (with distance in $\ZZ^2$ instead of
distance in $\cluster $) is a simple exercise using independence
and the Borel-Cantelli lemmas.

We now use the two-round exposure technique, namely the following
construction of the set of open vertices of supercritical percolation at parameter $p$.
Take the union of open vertices in a
supercritical percolation with parameter $p'<p$, and an independent site percolation with parameter $p''$ where $p=p'+p''-p'p''$.

Consider the percolation at $p'$. Note that its infinite cluster
$\cluster '$ is unique and dense in $\mathbb Z^2$. Dense here means that the root (and so every vertex) has a positive probability of being contained in this cluster. Moreover, by the standard Antral-Pisztora result \cite{AP}, there is a constant $\eta$ so that the set of
vertices $\cluster ^+$ in $\cluster '$ whose distance  in $\cluster $ is at most $\eta$ times
their $\ZZ^2$ distance from the vertex in $\cluster $ closest to $0$
is also dense.

Given this dense set of vertices $\cluster ^+$, we can use the
independent percolation at $p''$ to add squares of size $c\log r$ at distance $r$
that are connected to $\cluster ^+$.
It follows that the infinite open cluster in the union of the two site percolations
has the desired properties.
\end{proof}

\begin{lemma} Let $\cluster$ be a connected subgraph of
$\mathbb Z^2$ satisfying property \eqref{cluster property}.
Then the probability that simple random walk exits from
$\cluster$ in $r$ steps decays slower than exponentially in
$r$.
\end{lemma}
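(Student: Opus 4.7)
The plan is to interpret the lemma as the assertion that $\pr(\tau > r) \ge e^{-o(r)}$, where $\tau$ is the first time a simple random walk on $\mathbb Z^2$ started at the distinguished root $o \in \cluster$ leaves $\cluster$; that is, the probability of \emph{failing} to exit $\cluster$ within $r$ steps decays only subexponentially. I will exhibit an explicit survival strategy driven by property \eqref{cluster property}.

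First I apply the property at scale $R := \lceil \sqrt r \rceil$: for all sufficiently large $r$ there is a square $Q \subseteq \cluster$ of side length $L \ge a \log R \ge \tfrac{a}{2}\log r$ whose center $c$ lies at $\cluster$-distance at most $R$ from $o$. I fix a shortest $\cluster$-path $o = v_0,v_1,\ldots,v_T = c$ of length $T \le R$; every $v_i$ belongs to $\cluster$.

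I then split the $r$ steps of the walk into two phases. In Phase~1 I lower bound the probability that the walker traces $v_0,v_1,\ldots,v_T$ exactly. Each prescribed step has probability $1/4$ under simple random walk on $\mathbb Z^2$, so this probability is at least $4^{-T} \ge e^{-\sqrt r \,\log 4}$, and the walker remains inside $\cluster$ throughout Phase~1. In Phase~2 the walker starts at $c$ and must remain in $Q$ for the remaining $r - T$ steps; since $Q \subseteq \cluster$, this automatically keeps it inside $\cluster$. The survival probability in $Q$ is controlled by the principal Dirichlet eigenvalue $\lambda_1$ of the simple random walk transition matrix on $Q$, which satisfies $\lambda_1 \ge 1 - C/L^2$ for an absolute constant $C$; as the starting vertex $c$ is the center of the square, projecting $\delta_c$ onto the positive principal eigenfunction yields a survival bound of the form $c_0\, \lambda_1^{r-T} \ge c_0\, e^{-Cr/L^2}$ with $c_0 > 0$ uniform in large $L$.

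Multiplying the two contributions,
\[
\pr(\tau > r)\;\ge\;4^{-T}\cdot c_0\, e^{-C r / L^2}\;\ge\;\exp\!\left(-\sqrt r\,\log 4 \,-\, \tfrac{4C}{a^2}\,\frac{r}{(\log r)^2}\,+\,O(1)\right),
\]
and each summand in the exponent is $o(r)$, so $\pr(\tau > r) \ge e^{-o(r)}$, as required. There is no single hard step: the scale $R = \sqrt r$ is chosen precisely to balance the two phases, with $R \to \infty$ (which forces $L \to \infty$ via property \eqref{cluster property}) while $R \log 4 = o(r)$. The principal Dirichlet eigenvalue bound on an $L \times L$ square is classical (by Fourier analysis on the discrete torus, or by comparison with continuous Brownian motion), so the only input from the cluster geometry is the logarithmic lower bound on $L$ supplied by property \eqref{cluster property}.
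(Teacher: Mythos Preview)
Your proof is correct and follows essentially the same strategy as the paper: walk along a $\cluster$-geodesic to the center of a large square guaranteed by property \eqref{cluster property}, then survive inside that square for the remaining time. The only cosmetic differences are your choice of scale $R=\sqrt r$ (the paper instead phrases things so that the walk travels distance $r$ and then stays in the square for time $r\log r$, yielding a bound $e^{-c'r}$ for $\approx r\log r$ steps) and your use of the Dirichlet eigenvalue estimate $\lambda_1\ge 1-C/L^2$ in place of the paper's block-Markov bound $q^{t/v}$; both give subexponential survival.
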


\begin{proof}
Note that the probability that the random walk on $\ZZ^2$ starting at the center of a
square of volume $v$ in $\ZZ^2$, stays there for time at least
$t$ is bounded below by $q^{t/v}$ for some $q<1$.

So the probability that the random walk moves in $\cluster $ on a geodesic to
a square of size $c\log r$ at distance $r$, and there for time $r\log r$, is
at least $e^{-c'r}$. The claim follows.
\end{proof}

\begin{lemma}
Let $\cluster$ be a subgraph of a $d$-regular graph so that the
probability that the random walk stays in $\cluster$ for $n$ steps
decays slower than exponentially in $n$. Then the universal cover
of $\cluster $ has lower growth $d-1$.
\end{lemma}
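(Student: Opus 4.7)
The plan is first to lift the walk to the universal cover $T_d$ of the ambient $d$-regular graph $G$. The covering map $\pi: T_d \to G$ pulls back the simple random walk on $G$ from $o$ to the simple random walk on $T_d$ from any chosen lift $\tilde o$, and a walk on $G$ stays in $\cluster$ if and only if its unique lift stays in the preimage $\pi^{-1}(\cluster)$. Since $\pi$ restricts to a covering $\pi^{-1}(\cluster) \to \cluster$, and the connected component of $\pi^{-1}(\cluster)$ at $\tilde o$ is a connected subgraph of the tree $T_d$ and hence itself a simply connected tree, this component is isomorphic as a rooted graph to the universal cover $T := \overline{\cluster}$. The hypothesis now reads $P_n := \mathbf{P}(X_k \in T \text{ for all } 0 \le k \le n) \ge e^{-o(n)}$ for simple random walk $(X_n)$ on $T_d$ from $\tilde o$, and we must prove $\lgr T \ge d-1$ (the upper bound is automatic since $T$ has maximum degree $d$).

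Next I would use $P_n \le \mathbf{P}(X_n \in T)$ together with the rotational symmetry of $T_d$ to write
\[ P_n \le \sum_{k\ge 0} |S_k(T)|\, p_n(\tilde o, v_k) = \sum_{k\ge 0} r_k\, q_n(k), \qquad r_k := \frac{|S_k(T)|}{|S_k(T_d)|},\quad q_n(k) := \mathbf{P}(|X_n| = k). \]
The distance process $|X_n|$ is a biased birth--death chain on $\mathbb Z_{\ge 0}$ with drift $(d-2)/d$, so Cram\'er's theorem supplies a positive rate $J(\delta)>0$ with $\mathbf{P}(|X_n| \notin W_n^\delta) \le e^{-J(\delta)n}$ for the window $W_n^\delta := [(1-\delta)k^*(n),\,(1+\delta)k^*(n)]$, where $k^*(n) := n(d-2)/d$. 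Given $\epsilon>0$, first fix $\delta>0$ with $J(\delta) > 2\epsilon$. Since the hypothesis gives $P_n \ge e^{-\epsilon n}$ eventually, while the tail of the above sum contributes at most $e^{-J(\delta)n}$, there must exist $k_n \in W_n^\delta$ with $r_{k_n} \ge \tfrac12 e^{-\epsilon n}$; equivalently $|S_{k_n}(T)| \ge c(d-1)^{k_n} e^{-\epsilon n}$ for some constant $c = c(d)>0$.

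To upgrade this subsequential bound to a liminf bound I would use the elementary tree inequality $|S_{k+1}(T)| \le (d-1)|S_k(T)|$, which gives $|S_k(T)| \ge |S_{k_n}(T)|/(d-1)^{k_n-k} \ge c(d-1)^k e^{-\epsilon n}$ for every $k \le k_n$. For any large $k$, pick $n$ so that $k_n \ge k$; the constraint $k_n \in W_n^\delta$ forces $n \le C(d,\delta)\,k$, so $\epsilon n/k$ is bounded by a constant depending only on $\epsilon, d, \delta$. Hence for all large $k$,
\[ |S_k(T)|^{1/k} \ge (d-1)\,e^{-C(d,\delta)\,\epsilon}(1-o(1)). \]
Taking the liminf in $k$ and then $\epsilon \to 0$ yields $\lgr T \ge d-1$ and hence equality, completing the proof. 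The main obstacle is ensuring that the large-deviations rate $J(\delta)$ for $|X_n|$ strictly exceeds the exponent $\epsilon$ from the hypothesis; this is precisely where the strength ``decays slower than exponentially'' is essential, since it allows $\epsilon$ to be chosen arbitrarily small after $\delta$ (and therefore $J(\delta)$) has been fixed.
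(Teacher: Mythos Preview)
Your argument is correct and rests on the same ingredients as the paper's proof: lift to $T_d$, identify $\overline{\cluster}$ with a rooted subtree $T$, and combine large deviations for the distance process $|X_n|$ with the monotonicity $r_{k+1}\le r_k$ (equivalently $|S_{k+1}(T)|\le(d-1)|S_k(T)|$). The paper packages these slightly more directly, through the backtracking-erasure coupling: since nonbacktracking random walk of length $n$ stays in $T$ with probability exactly $s_n/(d(d-1)^{n-1})$, and the backtracking-erased SRW at time $m$ is NBRW at a time $N_m$ with $N_m/m\to(d-2)/d$, one obtains $s_n/(d(d-1)^{n-1})\ge P_{n/\alpha}-\mathbf P(N_{n/\alpha}<n)$ for any $\alpha<(d-2)/d$, giving subexponential decay of $s_n/(d-1)^n$ for \emph{all} $n$ at once. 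Your route replaces this by the weaker bound $P_n\le\mathbf P(X_n\in T)=\sum_k r_kq_n(k)$ and then needs a pigeonhole step plus monotonicity to pass from a single $k_n$ to all $k$; this costs nothing in the end.

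One phrasing slip to fix: the sentence ``the constraint $k_n\in W_n^\delta$ forces $n\le C(d,\delta)k$'' is not true as written, since any enormous $n$ also satisfies $k_n\ge k$. What you want is to \emph{choose} $n=\lceil kd/((1-\delta)(d-2))\rceil$; then $k_n\ge(1-\delta)k^*(n)\ge k$ automatically and $n\le C(d,\delta)k$. Also make sure to hold $\delta$ fixed while sending $\epsilon\to 0$ (any fixed $\delta>0$ works once $\epsilon<J(\delta)/2$), so that $C(d,\delta)$ stays bounded and the final limit $(d-1)e^{-C\epsilon}\to d-1$ is legitimate.
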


\begin{proof}
Let $A_n$ denote the event that random walk stays in $\cluster$ for $n$ steps.
Let $s_n$ be the size of the sphere in the universal cover.
Then the probability of the event $B_n$ that nonbacktracking random walk on the base graph stays in  $\cluster $ until time $n$ is given by
$$
P(B_n)=\frac{s_n}{d(d-1)^{n-1}}.
$$
Note also that running ordinary random walk until time
$n$ and deleting the backtrackings, we get nonbacktracking random
walk run until a random time $N_n\le n$. Indeed, erasing the backtrackings
just means taking the geodesic from the starting point to
the current vertex in the universal cover tree.

Standard arguments
show that $N_n/n  \to 1-2/d$ and the event that
$N_n/n < \alpha$ for $\alpha<1-2/d$ fixed has probability that is exponentially small in $n$.
Thus we have
\begin{eqnarray*}
P(A_{n})&\le&\sum_{k=0}^n P(N_n=k)P(B_k) \le
P(N_n<\alpha n) + \sum_{k=an}^n P(N_n=k)P(B_{\alpha n})
\\&\le& P(N_n<\alpha n)+P(B_{\alpha n}),
\end{eqnarray*}
and therefore
$$
\frac{s_n}{d(d-1)^{n-1}}=P(B_n)\ge P(A_{n/\alpha})-P(N_{n/\alpha}<n),
$$
where the first probability decays slower than exponentially, and the second exponentially.
The claim follows.
\end{proof}

\begin{proof}[Proof of Theorem \ref{t:z2cluster}]
The component of the origin in the supercritical percolation in $\ZZ^2$ is unimodular, so it must be one even when conditioned to be infinite.
In this case, it satisfies property \eqref{cluster property}.
Then its universal cover is a unimodular random graph with
lower growth $d-1$.
\end{proof}

\noindent {\bf Acknowledgments.}
We thank an extremely careful referee for many useful comments on two
previous versions. M.A.\ is partially supported by MTA Renyi
"Lendulet" Groups and Graphs Research Group.
Y.G.\ was partially supported by ISF grant 441/11 and U.S. NSF grants DMS 1107452, 1107263, 1107367 ``RNMS: Geometric structures And Representation varieties" (the GEAR Network).
B.V.\ was supported by the NSERC Discovery Accelerator Grant and the Canada Research Chair program.

\end{document}